\numberwithin{equation}{section}
\theoremstyle{plain}
\newtheorem{Theorem}{Theorem}
\newtheorem*{Theorem*}{Theorem}
\newtheorem{Lemma}[Theorem]{Lemma}
\newtheorem{Proposition}[Theorem]{Proposition}
\theoremstyle{definition}
\newtheorem{Definition}[Theorem]{Definition}
\newtheorem{Example}[Theorem]{Example}
\newtheorem*{notation}{Notation}
\theoremstyle{remark}
\newtheorem{Remark}[Theorem]{Remark}
\newtheorem{Question}[Theorem]{Question}
\newcommand{\CC}{\mathbb{C}}
\newcommand{\NN}{\mathbb{N}}
\newcommand{\PP}{\mathbb{P}}
\newcommand{\FF}{\mathbb{F}}
\newcommand{\RR}{\mathbb{R}}
\newcommand{\mR}{\mathsmaller{\mathbb{R}}}
\newcommand{\mC}{\mathsmaller{\mathbb{C}}}
\def\xx{{\bf x}}
\def\yy{{\bf y}}
\def\zz{{\bf z}}
\def\ww{{\bf w}}
\def\cc{{\bf c}}
\def\vv{{\bf v}}
\def\nn{{\bf n}}
\def\mm{{\bf m}}
\def\bdelta{{\boldsymbol{\delta}}}
\def\bomega{{\boldsymbol{\omega}}}
\def\balpha{{\boldsymbol{\alpha}}}
\def\codim{{\rm codim}}
\def\C{{{\mathbb C}}}
\author{Zahra Shahidi}
\address{Universit\`{a} di Firenze, Dipartimento di Matematica e Informatica, Viale Morgagni 67/A, 50134 Firenze, Italy}
\email{z.shahidi80@gmail.com}
\author{Luca Sodomaco}
\address{Department of Mathematics and Systems Analysis, Aalto University, Espoo, Finland}
\email{luca.sodomaco@aalto.fi}
\author{Emanuele Ventura}
\address{Politecnico di Torino, Dipartimento di Scienze Matematiche ``G. L. Lagrange'', Corso Duca degli Abruzzi 24, 10129 Torino, Italy}
\email{emanuele.ventura@polito.it, emanueleventura.sw@gmail.com}
\subjclass[2020]{14N07, 14N05, 14N10, 15A69, 15A18}
\keywords{Tensors, Singular vector tuples, Kalman variety, Generating function, Asymptotics}
\title{Degrees of Kalman varieties of tensors}
\begin{document}

\maketitle

\begin{abstract}
Kalman varieties of tensors are algebraic varieties consisting of tensors whose singular vector $k$-tuples lay on prescribed subvarieties. They were first studied by Ottaviani and Sturmfels in the context of matrices. We extend recent results of Ottaviani and the first author to the partially symmetric setting. We describe a generating function whose coefficients are the degrees of these varieties and we analyze its asymptotics, providing analytic results \`a la Zeilberger and Pantone. 
We emphasize the special role of isotropic vectors in the spectral theory of tensors and describe the totally isotropic Kalman variety as a dual variety. 
\end{abstract}

\section{Introduction}
Singular vector $k$-tuples are the partially symmetric tensor analog of singular vector pairs of rectangular matrices. Their definition is recalled in \S \ref{sec: preliminaries}. We refer to \cite{QZ} for background and applications. 

In this article, we tackle the problem of describing the set of partially symmetric tensors admitting a singular vector $k$-tuple $(\xx_1,\ldots,\xx_k)$, where (the equivalence class of) each entry  $\xx_i$ lies on a fixed irreducible subvariety $Z_i\subset \PP(V_i)$ of the $i$-th factor. 
This set is an algebraic variety in the projective space $\PP(S^{\omega_1} V_1\otimes\cdots\otimes S^{\omega_k} V_k)$ of partially symmetric tensors with $k$ factors called {\em generalized Kalman variety} of tensors.

Recently, the spectral theory of tensors along with its connections to pure and applied algebraic geometry and combinatorics is witnessing several interesting results; see e.g. \cite{BGV, Turatti} for recent geometric results. Despite this progress, algebraic varieties providing a clean geometric picture of spectral properties of tensors await to be described. Kalman varieties are therefore central objects in this context. 
This paper is a contribution to this circle of ideas from the perspective of degrees of varieties, their generating functions and asymptotics. 

The name {\em Kalman variety} was first introduced by Ottaviani and Sturmfels in \cite{OSt} to indicate the variety of square matrices possessing at least one eigenvector on a fixed linear subspace. They determined its codimension, degree and studied its singular locus.
Thereafter, Sam \cite{Sam} and Huang \cite{Huang} determined their defining equations. More recently, Ottaviani and the first author \cite{OSh} rephrased the original setting for singular vector pairs, extending it to the case of singular vector $k$-tuples.
Their Kalman variety is the variety of tensors having a singular vector $k$-tuple $(\xx_1,\ldots,\xx_k)$, where (the equivalance class of) the first entry $\xx_1$ lies on a fixed linear subspace $L\subset \PP(V_1)$. 

Our point of departure is \cite[Theorem 1.2]{OSh}, where the authors showed that the codimension of this Kalman variety is equal to the codimension of $L\subset\PP(V_1)$. Moreover, they established an elegant formula for the degrees of the Kalman variety for symmetric and non-symmetric tensors. In the latter case, letting $n_i=\dim(V_i)$ and $\delta=\codim(L)$, the desired degree is the coefficient of the monomial $h^{\delta}t_{1}^{n_1-\delta-1}\prod_{i\geq 2}t_{i}^{n_{i}-1}$ in the polynomial
\[
\prod_{i=1}^{k}\frac{(\widehat{t_i}+h)^{n_{i}}-t_i^{n_i}}{(\widehat{t_i}+h)-t_i}\,,\quad\widehat{t_i}\coloneqq\left(\sum_{j=1}^{k}t_j\right)- t_i\,.
\]
This expression is similar to the formula for the number of singular vector $k$-tuples of a general tensor.
The latter quantity coincides with another well-known metric invariant of an algebraic variety, called the {\it ED degree} of the Segre variety $\PP(V_1)\times\cdots\times\PP(V_k)$.
For more details on ED degrees of algebraic varieties, we refer to \cite{DHOST}.
Ottaviani and Friedland \cite[Theorem 1]{FO} computed the above ED degree using Chern classes of a suitable vector bundle on the Segre variety.

The aforementioned results establish the {\it enumerative} nature of the subject.
When enumerative structures appear, it is a natural problem to determine a generating function whose coefficients are the counted quantities.
Generating functions are tremendously useful tools to have a global picture of the enumerated objects. See \cite{MacMahon, Stanley, Wilf} for fascinating introductions to this topic along with its applications to combinatorics and analysis.
Zeilberger \cite{EZ} found a generating function for the Friendland-Ottaviani's formula and hence for the ED degrees. The asymptotic behavior was then analyzed by Pantone \cite{Pan}.
Zeilberger's generating function is strikingly similar to the generating function of degrees of hyperdeterminants. An asymptotic analysis of the latter was performed in \cite[Theorem 3.8]{OSV}.

Our first contribution is a generalization of \cite[Theorem 1.2]{OSh} and \cite[Theorem 12]{FO}. 
More specifically, we determine the degrees of generalized Kalman varieties of partially symmetric tensors.

\begin{Theorem}\label{thm: OSh, partially symmetric case}
For every $i\in[k]$, let $Z_i\subset \PP(V_i)$ be an irreducible projective variety of codimension $\delta_i$ and let $Z=\prod_{i=1}^k Z_i$. We assume that each $Z_i$ is not contained in the isotropic quadric $Q_i\subset\PP(V_i)$.
Let $\bdelta=(\delta_1,\ldots,\delta_k)$. The generalized Kalman variety
\[
\kappa_{\nn, \bomega}(Z)\coloneqq\{T\in\PP(S^{\bomega}V) \mid\mbox{$T$ has a singular vector $k$-tuple $([\xx_1], \ldots, [\xx_k])\in Z$}\}
\]
is connected of codimension $\delta\coloneqq\sum_{i=1}^k\delta_i$. The degree of $\kappa_{\nn, \bomega}(Z)$ is $d(\nn,\bdelta,\bomega)\prod_{i=1}^k\deg(Z_i)$, where $d(\nn,\bdelta,\bomega)$ is the coefficient of the monomial $h^{\delta}\prod_{i=1}^kt_{i}^{n_{i}-\delta_i-1}$ in the polynomial
\[
\prod_{i=1}^{k}\frac{(\widehat{t_i}+h)^{n_{i}}-t_i^{n_i}}{(\widehat{t_i}+h)-t_i}\,,\quad\widehat{t_i}\coloneqq\left(\sum_{j=1}^{k}\omega_j t_j\right)- t_i\,.
\]
\end{Theorem}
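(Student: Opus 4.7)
The plan is to adapt the incidence-variety technique of Friedland--Ottaviani \cite{FO} and Ottaviani--Shahidi \cite{OSh} to the partially symmetric setting, folding the weights $\bomega$ into the Chern-class computation. Set $N=\sum_i n_i$ and $N_0=\dim\PP(S^{\bomega}V)$, and work on $X=\PP(S^{\bomega}V)\times\prod_j\PP(V_j)$, with projections $\pi_0$ to the first factor and $\pi_j$ to $\PP(V_j)$. Let $h$ denote the hyperplane class on $\PP(S^{\bomega}V)$ and $t_j$ the hyperplane class on $\PP(V_j)$, and let $Q_j$ be the tautological rank-$(n_j-1)$ quotient bundle on $\PP(V_j)$. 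Define the rank-$(n_i-1)$ bundle
\[
F_i\;:=\;\pi_i^*Q_i\;\otimes\; L_i,
\]
where $L_i$ is the line bundle on $X$ with $c_1(L_i)=h+\widehat{t_i}$.

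The partial contraction $T(\xx_1^{\omega_1},\ldots,\xx_i^{\omega_i-1},\ldots,\xx_k^{\omega_k})\in V_i$, reduced modulo $\langle\xx_i\rangle$, is a global section of $F_i$; the twist $L_i$ records exactly the scaling weights of this expression---degree $\omega_j$ in each $\xx_j$ for $j\neq i$, degree $\omega_i-1$ in $\xx_i$, and degree $1$ in $T$. Stacking over $i$ produces a section $s$ of the rank-$(N-k)$ bundle $F=\bigoplus_i F_i$ whose zero scheme is the incidence variety
\[
\mathcal{V}:=\{(T,([\xx_1],\ldots,[\xx_k]))\in X\;\colon\;([\xx_1],\ldots,[\xx_k])\text{ is a singular $k$-tuple of }T\}.
\]
From $c(Q_j)=1+t_j+\cdots+t_j^{n_j-1}$ and the standard formula for Chern classes of a twist,
\[
c_{n_i-1}(F_i)\;=\;\sum_{a=0}^{n_i-1}(h+\widehat{t_i})^{n_i-1-a}t_i^{a}\;=\;\frac{(h+\widehat{t_i})^{n_i}-t_i^{n_i}}{(h+\widehat{t_i})-t_i},
\]
so $c_{N-k}(F)=\prod_i c_{n_i-1}(F_i)$ is precisely the polynomial in the statement.

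Next I would restrict $s$ to $X_Z:=\PP(S^{\bomega}V)\times Z$ and show that $s|_{X_Z}$ is a regular section of $F|_{X_Z}$, so that the restricted incidence $\mathcal{V}_Z$ has the expected codimension $N-k$ in $X_Z$. The projection $\pi_Z:\mathcal{V}_Z\to Z$ has linear fibers in $\PP(S^{\bomega}V)$ cut by the singular-tuple equations on $T$; their rank is exactly $N-k$ away from isotropic loci, and the hypothesis $Z_i\not\subset Q_i$ guarantees a dense open subset of $Z$ where maximal rank is attained. Irreducibility of $Z$ together with irreducibility of the generic fiber yields irreducibility (hence connectedness) of $\mathcal{V}_Z$, of dimension $N_0-\delta$. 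Showing that the first projection $\pi_0:\mathcal{V}_Z\to\kappa_{\nn,\bomega}(Z)$ is birational---i.e.\ that a general tensor in the Kalman variety admits a unique singular tuple lying in $Z$---then transfers connectedness and codimension $\delta$ to $\kappa_{\nn,\bomega}(Z)$.

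Granted birationality, the degree becomes the intersection number
\[
\deg\kappa_{\nn,\bomega}(Z)\;=\;\int_{X_Z} h^{N_0-\delta}\cdot c_{N-k}(F),
\]
which via $[Z_i]=\deg(Z_i)\,t_i^{\delta_i}$ in $A^*(\PP(V_i))$ and pushforward to a point evaluates to $\prod_i\deg(Z_i)$ times the coefficient of $h^{\delta}\prod_i t_i^{n_i-\delta_i-1}$ in $\prod_i c_{n_i-1}(F_i)$, matching the claim. The main obstacle, to my mind, is the regularity/birationality step: the hypothesis $Z_i\not\subset Q_i$ is exactly what excludes isotropic degeneracies of the singular-tuple equations, and the partial symmetrization alters the linear algebra at isotropic vectors compared to the non-symmetric setting of \cite{OSh}, so one must carefully verify factor-by-factor that the single-index argument there carries through in the presence of the weights $\omega_j$.
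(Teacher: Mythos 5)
Your overall strategy is the same as the paper's: realize the incidence variety as the zero locus of a section of the Friedland--Ottaviani bundle, compute its Chern class, restrict to $Z$, and push forward via the two projections. The Chern-class identity $c_{n_i-1}(F_i)=\sum_{a=0}^{n_i-1}(h+\widehat{t_i})^{n_i-1-a}t_i^{a}$ is computed correctly and matches the paper.

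However, there is a genuine gap in the step where you assert that ``irreducibility of $Z$ together with irreducibility of the generic fiber yields irreducibility (hence connectedness) of $\mathcal{V}_Z$.'' This inference is false in general, because the fibers of $\beta_Z:\mathcal{V}_Z\to Z$ are \emph{not} equidimensional: the paper's Theorem~\ref{thm: fibers} shows that over a point with $r\ge 1$ isotropic components, the fiber is a linear space of codimension $\sum_i(n_i-1)-(r-1)$, strictly smaller than the generic codimension $\sum_i(n_i-1)$. As a result $\mathcal{V}_Z$ generically has several irreducible components, indexed by the strata $Z\cap Q_J$ with $J\subset[k]$, and the same can be true of $\kappa_{\nn,\bomega}(Z)$ itself. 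This is exactly why the theorem claims \emph{connectedness} and not irreducibility, and why the paper explicitly flags irreducibility of $\kappa_{\nn,\bomega}(Z)$ as an open question. So your argument proves too much.

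What is actually needed, and what the paper supplies, is: (a) the stratification of $\PP$ by the loci $Q_J$ and the precise codimension drop of the fibers over each stratum (Theorem~\ref{thm: fibers}); (b) a component lemma (Lemma~\ref{lem: components}) showing each $\overline{\beta_Z^{-1}(Z\cap Q_J)}$ is irreducible and that together they are connected and exhaust $\mathcal{V}_Z$; and (c) the dimension count of Proposition~\ref{prop: DimensionsKalmanstrata} showing the non-isotropic stratum ($J=\emptyset$) is the unique top-dimensional one, strictly dominating all others. Point (c) is also essential for your degree computation: the Chern-class intersection number equals the degree of $\kappa^{nor}_{\nn,\bomega}(Z)$ only because the excess components lie in higher codimension. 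You gesture at these issues in your final paragraph, but the stratified fiber-dimension analysis at isotropic points is the core of the proof, not an ``obstacle'' to be checked later.
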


We study separately the case when each subvariety $Z_i$ coincides with the isotropic quadric $Q_i$. The corresponding Kalman variety is called {\em totally isotropic Kalman variety}. In Theorem \ref{thm: codim degree Kalman partially symmetric isotropic}, we describe the totally isotropic Kalman variety as a dual variety of a specific Segre-Veronese variety.
This specializes to Theorem \ref{thm: codim degree Kalman symmetric isotropic} in the symmetric case; this last result is related to \cite[Proposition 2.10]{BGV}.

Our second main result furnishes a Zeilberger-type \cite[Proposition 1]{EZ} generating function for the degrees of generalized Kalman varieties in the case $\bdelta=(\delta,0,\ldots,0)$.

\begin{Theorem}\label{thm: generating function, partially symmetric case}
Keep the assumptions of Theorem \ref{thm: OSh, partially symmetric case}. If $\bdelta=(\delta,0,\ldots,0)$ for some $\delta\ge 0$, the generating function for the coefficients $d(\nn,\delta,\bomega)$ is
\begin{equation}\label{eq: generating function, partially symmetric}
\sum_{\nn\in\mathbb{N}^k}\sum_{\delta=0}^{\infty} d(\nn,\delta,\bomega)\,\xx^\nn y^\delta\,=\,\frac{1}{H_{\bomega}(\xx,y)}\prod_{i=1}^k\frac{x_i}{1-x_i}
\end{equation}
where
\begin{equation}\label{eq: def H, partially symmetric}
H_{\bomega}(\xx,y)\coloneqq -y\,x_1\prod_{i=2}^k(1+x_i)+\prod_{i=1}^k(1+ x_i)- \sum_{j=1}^k \omega_jx_j\prod_{i \neq j}(1+ x_i)\,.
\end{equation}
\end{Theorem}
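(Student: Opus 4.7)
My plan is to derive \eqref{eq: generating function, partially symmetric} by formal residue calculus, in the spirit of Zeilberger's strategy in \cite{EZ}. First I write the coefficient extraction from Theorem \ref{thm: OSh, partially symmetric case} as a multivariate Laurent residue:
$$
d(\nn, \delta, \bomega) \;=\; [h^\delta]\operatorname{Res}_{0}\frac{P_\bomega(\mathbf{t}, h)}{t_1^{n_1-\delta}\prod_{i\ge 2}t_i^{n_i}},
$$
where $P_\bomega(\mathbf{t}, h) = \prod_i \frac{a_i^{n_i}-t_i^{n_i}}{a_i - t_i}$, $a_i = \widehat{t_i} + h$, and $\operatorname{Res}_0$ denotes the coefficient of $\prod_i t_i^{-1}$ in the formal Laurent expansion. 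Since $a_i$ does not depend on any index $n_j$, swapping $\sum_\nn \xx^\nn$ inside the residue factorizes the sum, and applying the elementary identity
$$
\sum_{n \ge 1} u^n\,\frac{a^n-t^n}{a-t} \;=\; \frac{u}{(1-ua)(1-ut)}
$$
with $u = x_i/t_i$ yields
$$
\sum_{\nn}\xx^\nn d(\nn, \delta, \bomega) \;=\; \prod_{i=1}^k\frac{x_i}{1-x_i}\cdot [h^\delta]\operatorname{Res}_0\frac{t_1^\delta}{\prod_i(t_i - x_i a_i)}\,.
$$

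Next, I multiply by $y^\delta$ and sum over $\delta$. Since $[h^\delta]$ is linear and commutes with $\operatorname{Res}_0$, the operator $\sum_\delta (yt_1)^\delta[h^\delta]$ applied to $1/\prod_i(t_i - x_i a_i)$ is exactly the substitution $h \mapsto y t_1$ (using the tautology $\sum_\delta w^\delta[h^\delta]F(h)=F(w)$). This produces
$$
\sum_{\nn,\delta} d(\nn, \delta, \bomega)\,\xx^\nn y^\delta \;=\; \prod_{i=1}^k\frac{x_i}{1-x_i}\cdot \operatorname{Res}_0 \frac{1}{\prod_{i=1}^k L_i(\mathbf{t})},
$$
where $L_i(\mathbf{t}) = t_i - x_i(\widehat{t_i} + y t_1) = \sum_l A_{il}t_l$ is linear in $\mathbf{t}$ with coefficient matrix $A_{il} = (1+x_i)\delta_{il} - x_i\omega_l - x_i y\,\delta_{l1}$.

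The payoff is that the multivariate residue of $1/\prod_i L_i$ at the origin equals $1/\det(A)$ by the classical change-of-variables formula $u_i = L_i(\mathbf{t})$, whose Jacobian is $\det(A)$. The matrix $A = \mathrm{diag}(1+x_i) - \xx\,\cc^{T}$ with $\cc = (\omega_1+y,\omega_2,\ldots,\omega_k)^{T}$ is a rank-one perturbation of a diagonal matrix, so the matrix determinant lemma gives
$$
\det(A) \;=\; \prod_i(1+x_i)\Bigl(1-\sum_l\tfrac{x_l(\omega_l+y\delta_{l1})}{1+x_l}\Bigr) \;=\; \prod_i(1+x_i) - \sum_j \omega_j x_j\prod_{i\neq j}(1+x_i) - y\,x_1\prod_{i\neq 1}(1+x_i) \;=\; H_\bomega(\xx,y)\,,
$$
which is precisely \eqref{eq: def H, partially symmetric}. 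The main technical obstacle is bookkeeping: justifying the two sum-residue interchanges and confirming that the formal $[\prod_i t_i^{-1}]$ extraction coincides with the residue of the rational function $1/\prod_i L_i$. Both issues are settled by carrying out the whole argument in the ring of iterated formal Laurent series in $\mathbf{t}$ with coefficients in $\NN[[\xx,y,h]]$, where every manipulation converges termwise. The rest is the determinantal identity above.
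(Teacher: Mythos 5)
Your proof is correct, and it does take a genuinely different route from the paper's, though both are broadly in Zeilberger's spirit. The paper handles $\sum_\nn$ via its Lemma \ref{lem: summation}, then applies the MacMahon Master Theorem to a $(k+1)\times(k+1)$ matrix that carries $h$ as an additional variable, and finally computes the determinant by cofactor expansion with two induction-on-$k$ claims. You instead package the $\sum_\nn$ in the closed form $\sum_n u^n (a^n-t^n)/(a-t) = u/((1-ua)(1-ut))$, absorb the $\delta$-sum by the substitution $h\mapsto y\,t_1$ (a nice trick: the power $t_1^\delta$ in the numerator is exactly what turns the tautology $\sum_\delta w^\delta[h^\delta]F = F(w)$ into a substitution into the residue), work with a $k\times k$ linear system instead of a $(k+1)\times(k+1)$ one, and compute $\det A$ in one line via the matrix determinant lemma for the rank-one perturbation $D - \xx\cc^T$. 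Each of these steps buys you some economy: one fewer dimension on the matrix, and no cofactor expansion.

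One point worth tightening: your invocation of "the classical change-of-variables formula" for the identity $\operatorname{Res}_0 (1/\prod_i L_i) = 1/\det A$ is correct for the analytic (Grothendieck) local residue, but the paper's residue is a coefficient-extraction in a specific iterated-Laurent ring, and linear change of variables is not a priori an operation there. In fact, after normalizing $A = D(I - N)$ with $N$ strictly off-diagonal and small, the expansion of $1/L_i = t_i^{-1}\sum_m (\sum_l N_{il}t_l/t_i)^m$ and the extraction of $[\prod t_i^{-1}]$ reproduces exactly $\sum_\mathbf{m}\mathbf{w}^\mathbf{m}[\mathbf{t}^\mathbf{m}]\prod_i\ell_i^{m_i}$, which is precisely the left-hand side of MacMahon's Master Theorem; so your residue identity is the Master Theorem in disguise rather than a replacement for it. Alternatively, one can justify it by noting that the formal residue agrees with the contour integral on a suitable polytorus when $\xx, y$ are small complex numbers, and then invoke analytic change of variables and analytic continuation. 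Either justification closes the gap; as written, the step is true but elides the key combinatorial input.
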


Our third result offers an asymptotic study of the degrees $d(\nn,\bdelta,\bomega)$ in three different regimes: first, when $n_i\to\infty$ for a single index $i$. Secondly, in the binary format $n_1=\cdots=n_k=2$ as $k\to\infty$. Finally, in the hypercubical format $n_1=\cdots=n_k=n$ for $n\to\infty$. The last asymptotic study is more involved and leads to the following result, which agrees with \cite[Theorem 1.3]{Pan} for $\delta=0$ and $\omega=1$. We refer to the first two paragraphs of Section \ref{sec: preliminaries} for the preliminary notations and definitions used.

\begin{Theorem}\label{thm: deg Kalman approx}
Consider the factor $d(n,\delta,\omega)$ in the degree of the generalized Kalman variety $\kappa_{n{\bf 1}, \omega{\bf 1}}(Z)$. Assume that either $k=2$ and $w \geq 2$ or $k \geq 3$. Then asymptotically, for $n\to +\infty$, 
\[
d(n,\delta,\omega) = \frac{(\omega k-1)^{k-1}}{(2\pi)^\frac{k-1}{2}\,(\omega k)^{\frac{k-2}{2}}(\omega k-2)^\frac{3k-1}{2}}\left(\frac{\omega k}{\omega k-1}\right)^\delta\frac{(\omega k-1)^{kn}}{n^{\frac{k-1}{2}-\delta}} \left[1+O\left(\frac{1}{n}\right)\right]\,.
\]
\end{Theorem}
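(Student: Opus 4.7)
The plan is to derive the asymptotic from Theorem \ref{thm: generating function, partially symmetric case} by first extracting the coefficient of $y^\delta$ and then applying multivariate asymptotic analysis (ACSV, in the spirit of Pemantle--Wilson) to the resulting $k$-variable rational series. Specializing $\bomega=\omega\mathbf{1}$ in \eqref{eq: def H, partially symmetric} and writing $H_{\omega\mathbf{1}}(\xx,y)=A(\xx)-y\,B(\xx)$ with
\[
A(\xx)=\prod_{i=1}^k(1+x_i)-\omega\sum_{j=1}^k x_j\!\prod_{i\neq j}(1+x_i),\qquad B(\xx)=x_1\prod_{i=2}^k(1+x_i),
\]
the geometric-series expansion of $1/(A-yB)$ yields $[y^\delta](1/H_{\omega\mathbf{1}})=B^\delta/A^{\delta+1}$, and hence
\[
d(n,\delta,\omega)=[x_1^n\cdots x_k^n]\,\frac{B(\xx)^\delta}{A(\xx)^{\delta+1}}\prod_{i=1}^k\frac{x_i}{1-x_i}.
\]
The problem thus reduces to a main-diagonal coefficient extraction from a rational $k$-variable series with a pole of order $\delta+1$ on the smooth hypersurface $V(A)$.

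By the $S_k$-symmetry of both $A$ and the direction $(1,\ldots,1)$, the dominant singularity should lie on the symmetric diagonal $\xx=x\mathbf{1}$. From $A(x\mathbf{1})=(1+x)^{k-1}\bigl[1-(\omega k-1)x\bigr]$ one reads off the critical value $x^\ast=1/(\omega k-1)$, which under the hypothesis $\omega k>2$ lies strictly in $(0,1)$; this makes the other singular loci $\{x_i=\pm 1\}$ inactive, and a direct computation gives $\partial_{x_1}A(x^\ast\mathbf{1})=-\omega(1+x^\ast)^{k-2}\neq 0$, confirming that $x^\ast\mathbf{1}$ is a minimal smooth critical point for the direction $\mathbf{1}$. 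Applying the Pemantle--Wilson asymptotic formula for a smooth critical point with a pole of order $m=\delta+1$ then yields a leading contribution of the form
\[
\frac{n^{\delta}}{\delta!}\cdot\frac{N(x^\ast\mathbf{1})}{\bigl(-x^\ast\partial_{x_1}A(x^\ast\mathbf{1})\bigr)^{\delta+1}}\cdot\frac{(x^\ast)^{-kn}}{(2\pi n)^{(k-1)/2}\sqrt{\det\mathcal{H}}},
\]
where $N(\xx)=B(\xx)^\delta\prod_i x_i/(1-x_i)$ and $\mathcal{H}$ is the Hessian of the associated logarithmic phase restricted to $V(A)$. Substituting $x^\ast=1/(\omega k-1)$, $1+x^\ast=\omega k/(\omega k-1)$, $1-x^\ast=(\omega k-2)/(\omega k-1)$, together with the elementary closed form $B(x^\ast\mathbf{1})=(\omega k)^{k-1}/(\omega k-1)^k$, produces the exponential factor $(\omega k-1)^{kn}$, the ratio $(\omega k/(\omega k-1))^\delta$, and the polynomial factor $n^{\delta-(k-1)/2}$.

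The principal obstacle is the Hessian determinant. After using the Implicit Function Theorem to eliminate one variable via $A=0$, the Hessian becomes a $(k-1)\times(k-1)$ matrix equivariant under the residual $S_{k-1}$-action, and by Schur's lemma it decomposes into one simple eigenvalue on the trivial isotypic component (spanned by $\mathbf{1}$) and one eigenvalue of multiplicity $k-2$ on the standard isotypic component. Each of these two eigenvalues is extracted from a short one-variable computation, one differentiation of $\log A$ along the diagonal and another along a single standard-representation direction, yielding closed-form expressions in $\omega k$. Multiplying them and inserting the result into the master formula exactly reproduces the factors $(\omega k)^{(k-2)/2}$ and $(\omega k-2)^{(3k-1)/2}$ in the denominator of the stated constant; the error $O(1/n)$ is the next-order correction in the Pemantle--Wilson expansion.
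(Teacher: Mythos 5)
Your proposal follows essentially the same strategy as the paper: extract the coefficient of $y^\delta$ from the generating function of Theorem~\ref{thm: generating function, partially symmetric case} to obtain a $k$-variable rational series with a pole of order $\delta+1$, locate the minimal smooth critical point on the symmetric diagonal at $x^\ast=1/(\omega k-1)$, and apply the smooth-point multivariate ACSV formula with higher-order pole to read off the asymptotics. Your intermediate computations (the factorization $A(x\mathbf{1})=(1+x)^{k-1}[1-(\omega k-1)x]$, the derivative $\partial_{x_1}A(x^\ast\mathbf{1})=-\omega(1+x^\ast)^{k-2}$, the value $B(x^\ast\mathbf{1})=(\omega k)^{k-1}/(\omega k-1)^k$, and the parameter substitutions) are correct.

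There are, however, two points where your route diverges from the paper's, and one gap. First, the paper does \emph{not} treat $V(A)=V(H_2)$ as the singular variety; it rewrites $F_\bomega=F_{N}/F_{D}^{\delta+1}$ with $F_{D}=H_{2,\omega\mathbf{1}}\prod_i(1-x_i)$, which is a polynomial denominator matching Zeilberger's and Pantone's setup verbatim. This is not a cosmetic choice: it allows the paper to invoke Raichev--Wilson \cite[Theorem 3.2]{RW} with polynomial $G_N,G_D$ as stated, and to cite Pantone's already-verified Propositions~2.2--2.7 for strict minimality, criticality, non-degeneracy, and the Hessian of $F_D$ along the diagonal. Your formulation keeps the factor $\prod_i x_i/(1-x_i)$ in the numerator; this gives the same function, and the poles at $x_i=1$ are indeed not minimal since $x^\ast<1$, but you should say explicitly that the Pemantle--Wilson smooth-point theorem still applies when the ``numerator'' is merely holomorphic in a neighborhood of the critical point rather than polynomial --- otherwise the hypotheses of the theorem you quote are not literally met.

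Second, and more substantively, the Hessian computation is the heart of the constant and you leave it unexecuted. The paper imports $\det\tilde g''(0)=(\omega k-2)^{k-1}/(\omega k)^{k-2}$ from Pantone's work \cite[Proposition 2.7]{Pan} (with $\omega$ replacing $k$-dependence appropriately); you instead sketch an $S_{k-1}$-equivariance/Schur decomposition of the $(k-1)\times(k-1)$ Hessian into a simple trivial-isotypic eigenvalue and a $(k-2)$-fold standard-isotypic eigenvalue. That is a valid and arguably cleaner strategy, and it would be a genuinely different derivation of that factor, but as written it is a plan rather than a proof: you assert that ``multiplying them \ldots exactly reproduces'' $(\omega k)^{(k-2)/2}$ and $(\omega k-2)^{(3k-1)/2}$, without producing the two eigenvalues. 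To make the argument self-contained you must either carry out those two one-variable differentiations and verify that the resulting product equals $(\omega k-2)^{k-1}/(\omega k)^{k-2}$, or cite the existing computation in \cite{Pan} (or \cite{OSV}) after checking that the diagonal restriction of your $A$ coincides, up to the factor $\prod_i(1-x_i)$, with the denominator analyzed there. Finally, the paper also needs Proposition~\ref{prop: L_0} (the value $L_0(\tilde u_0,\tilde g)=\tilde u_0(\cc)$, computed from $F_N(\cc)$ and $(-c_k\partial_k F_D(\cc))^{\delta+1}$) to obtain the precise constant; your numerator evaluation is consistent with this but you should spell out how $N(x^\ast\mathbf{1})$, $(-x^\ast\partial_{x_1}A(x^\ast\mathbf{1}))^{\delta+1}$ and the Hessian combine to give exactly $(\omega k-1)^{k-1}/[(\omega k)^{(k-2)/2}(\omega k-2)^{(3k-1)/2}]$ rather than leaving it as ``exactly reproduces.''
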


In Definition \ref{def: kalmanstrata}, we introduce the {\it Kalman strata}. These are, roughly speaking, the building blocks of generalized Kalman varieties,  and are meant to take into account the presence of {\it isotropic} vectors in the singular vector tuple. The proof of Theorem \ref{thm: OSh, partially symmetric case} relies on determining the linear space of tensors having a singular vector $k$-tuple over a specific Kalman stratum. This is achieved in Theorem \ref{thm: fibers}. This result is interesting on its own right, giving an intrinsic description of the fibers in terms of orthogonal spaces. Another consequence of Theorem \ref{thm: fibers} is Proposition \ref{prop: irredsigmax}, where we show the irreducibility of the {\it spectral variety} (see Definition \ref{def: spectralvariety}), which is a natural incidence correspondence in this setting. 

In \S \ref{sec: kalmanofsymmetricvectortuples}, we introduce Kalman varieties of symmetric singular vector $k$-tuples and derive their codimensions. The degrees of these new interesting varieties seem more challenging even in the case of matrices. We leave their determination as an open question.

\section*{Acknowledgements}
\begin{small}
The idea of this project was conceived during Shahidi's postdoc at Universit\`{a} di Firenze. We thank Giorgio Ottaviani for very useful discussions and  encouragement. 
We thank Jan Draisma for explaining to us a way of deriving the equations in Example \ref{ex: deg7example}. The first author thanks Dr. Alireza Firoozfar and Dr. Mohsen Afsharchi for their support.
The second author is partially supported by the Academy of Finland Grant 323416. During most of the preparation of the manuscript, the third author was a postdoc at Universit\"{a}t Bern, supported by Vici Grant 639.033.514 of Jan Draisma from the  Netherlands Organisation for Scientific Research.
We thank two anonymous referees 
for their useful comments and questions that also helped to improve the presentation.
\end{small}

\section{Kalman varieties of tensors}\label{sec: preliminaries}

\begin{notation}
Throughout the paper, if not specified we denote by $\bf{j}$ a vector $(j_1,\ldots,j_k)$ of $k$ variables, while $\xx^\mm$ stands for the monomial $x_1^{m_1}\cdots x_k^{m_k}$. We often use the shorthand $[k]$ to denote the set of indices $\{1,\ldots,k\}$. Define ${\bf 1} = (1,\ldots,1)\in \NN^k$ and, for $m\in \NN$, let $m{\bf 1} = (m,\ldots, m)\in \NN^k$. 
\end{notation}

For every $i\in[k]$ we consider an $n_i$-dimensional vector space $V_i$ over the field $\FF=\RR$ or $\FF=\C$. If $\FF=\RR$, then we prefer the notation $V_i^\mR$.
Let $\bomega=(\omega_1,\ldots,\omega_k)$ be a vector of nonnegative integers.
For each $i\in[k]$, let $S^{\omega_i}V_i$ be the $\omega_i$-th symmetric power of $V_i$, as a subspace of the tensor product $V_i^{\otimes \omega_i}$.
Moreover, we denote by $S^{\bomega}V$ the tensor product $\bigotimes_{i=1}^k S^{\omega_i}V_i$.
This is the space of {\em partially symmetric tensors} of format $n_1^{\times\omega_1}\times\cdots\times n_k^{\times\omega_k}$.
Every element of $S^{\bomega}V$ is a linear combination of {\em decomposable partially symmetric tensors}, that is, tensors of the form $T=\xx_1^{\omega_1}\otimes\cdots\otimes\xx_k^{\omega_k}$ for some vectors $\xx_i\in V_i$.
On each projective space $\PP(V_i)$ we fix a smooth projective quadric hypersurface $Q_i=\mathcal{V}(q_i)$, where $q_i$ is the homogeneous polynomial in $\C[x_{i,1},\ldots,x_{i,n_i}]_2$ associated to a positive definite real quadratic form $q_i^\mR\colon V_i^\mR\to\RR$. We refer to $Q_i$ as the {\em isotropic quadric} in the $i$-th factor $\PP(V_i)$.
Finally, we denote by $\PP$ the product $\prod_{i=1}^k\PP(V_i)$ and by $\Pi_{\nn,\bomega}$ the product $\PP(S^\bomega V)\times\PP$.

\begin{Definition}\label{def: Frobenius inner product}
The {\it Frobenius (or Bombieri-Weyl) inner product} of two complex decomposable tensors $T = \xx_1^{\omega_1}\otimes\cdots\otimes \xx_k^{\omega_k}$ and $T' = \yy_1^{\omega_1}\otimes\cdots\otimes \yy_k^{\omega_k}$ of $S^{\bomega}V$ is
\begin{equation}\label{eq: Frobenius inner product for tensors}
q_F(T, T')\coloneqq q_1(\xx_1, \yy_1)^{\omega_1}\cdots q_k(\xx_k, \yy_k)^{\omega_k}\,,
\end{equation}
and it is naturally extended to every vector in $S^{\bomega}V$.
We identify all the vector spaces with their duals using the Frobenius inner product.
\end{Definition}

\begin{Definition}\label{def: singular vector tuple}
Let $T\in S^{\bomega}V$. A {\em singular vector $k$-tuple} of $T$ is a $k$-tuple $(\xx_1,\ldots,\xx_k)$ of nonzero vectors $\xx_i\in V_i$ such that
\begin{equation}\label{eq: def singular vector tuple matrix}
\mathrm{rank}
\begin{pmatrix}
T(\xx_1^{\omega_1}\otimes\cdots\otimes \xx_i^{\omega_i-1}\otimes\cdots\otimes \xx_k^{\omega_k})\\
\xx_i
\end{pmatrix}
\le 1\quad\forall\,i\in[k]\,,
\end{equation}
where $T(\xx_1^{\omega_1}\otimes\cdots\otimes \xx_i^{\omega_i-1}\otimes\cdots\otimes \xx_k^{\omega_k})$ is the tensor contraction of $T$ with respect to $\xx_1^{\omega_1}\otimes\cdots\otimes \xx_i^{\omega_i-1}\otimes\cdots\otimes \xx_k^{\omega_k}$.
If $\omega_i=1$ for some $i\in[k]$, then $\xx_i^{\omega_i-1}=\xx_i^0=1$ as an element of $S^0V_i=\FF$.
If we interpret $T$ as a multi-homogeneous polynomial in the coordinates of each vector $\xx_i$, then the previous tensor contraction corresponds the gradient $\nabla_iT$ with respect to the coordinates of $\xx_i$.\\
A singular vector $k$-tuple $(\xx_1,\ldots,\xx_k)$ is {\em normalized} if $q_i(\xx_i)=1$ for all $i\in[k]$. A singular vector $k$-tuple $(\xx_1,\ldots,\xx_k)$ is {\em isotropic} if $q_i(\xx_i)=0$ for some $i\in[k]$. When $k=1$, $\nn = (n)$ and $\bomega = (\omega)$, a vector $\xx\in V$ satisfying \eqref{eq: def singular vector tuple matrix} is called an {\em eigenvector} of the symmetric tensor $T\in S^{\omega}V$.
\end{Definition}

\begin{Definition}\label{def: singular values}
Let $T\in S^{\bomega}V$ and let $(\xx_1,\ldots,\xx_k)$ be a singular vector $k$-tuple of $T$. For every $i\in[k]$, the value $\sigma_i\in\C$ such that
\begin{equation}\label{eq: svt singular values}
T(\xx_1^{\omega_1}\otimes\cdots\otimes \xx_i^{\omega_i-1}\otimes\cdots\otimes \xx_k^{\omega_k}) = \sigma_i\,\xx_i
\end{equation}
is called the {\em $i$-th singular value} of $(\xx_1,\ldots,\xx_k)$.
\end{Definition}

\begin{Remark}\label{rmk: equal singular values}
Let $T\in S^{\bomega}V$ and let $(\xx_1,\ldots,\xx_k)$ be a singular vector $k$-tuple of $T$ with the associated $k$-tuple $(\sigma_1,\ldots,\sigma_k)$ of singular values. For every $i\in[k]$, the identity \eqref{eq: svt singular values} can be interpreted as the identity in $(V_i)^*$
\[
q_F(T, \xx_1^{\omega_1}\otimes\cdots\otimes \xx_i^{\omega_i-1}\cdot\_\otimes\cdots\otimes \xx_k^{\omega_k}) = \sigma_i\,q_i(\xx_i,\_)\,.
\]
Evaluating the left-hand side of the previous identity at the vector $\xx_i$ yields the number $q_F(T, \xx_1^{\omega_1}\otimes\cdots\otimes \xx_k^{\omega_k})$ which does not depend on the specific index $i$. Thus we have $\sigma_1\,q_1(\xx_1,\xx_1)=\cdots=\sigma_k\,q_k(\xx_k,\xx_k)$. Therefore
\begin{itemize}
    \item[$(i)$] $\sigma_1=\cdots=\sigma_k$ if $(\xx_1,\ldots,\xx_k)$ is normalized. The common value $\sigma\coloneqq \sigma_1=\cdots=\sigma_k$ is often called the {\em singular value} of the normalized singular vector $k$-tuple $(\xx_1,\ldots,\xx_k)$.
    \item[$(ii)$] $\sigma_j=0$ for all $j\in[k]$ such that $q_j(\xx_j,\xx_j)\neq0$ if $(\xx_1,\ldots,\xx_k)$ is isotropic. 
\end{itemize}
\end{Remark}

If $(\xx_1,\ldots,\xx_k)$ is a singular vector $k$-tuple of $T\in S^{\bomega}V$, then it is immediate to check that, for every tuple $(\lambda_1,\ldots,\lambda_k)$ of nonzero complex numbers, the $k$-tuple $(\lambda_1\,\xx_1,\ldots,\lambda_k\,\xx_k)$ is also a singular vector $k$-tuple of $T$. For this reason, we consider $\PP=\prod_{i=1}^k \PP(V_i)$ and say that $([\xx_1],\ldots,[\xx_k])\in\PP$ is a singular vector $k$-tuple of $T$ if its representative $(\xx_1,\ldots,\xx_k)$ is.

\begin{Definition}
For every $i\in[k]$, we denote by $\mathrm{SO}(V_i)$ the (complex) {\em special orthogonal group} of automorphisms of $V_i$ with determinant 1 that preserve the bilinear product in $V_i$ associated to $q_i$. 
\end{Definition}

\begin{Remark}\label{rmk: actionSO}
The notion of singular vector $k$-tuple is $\mathrm{SO}(V_1)\times\cdots\times\mathrm{SO}(V_k)$-equivariant. More explicitly, given a singular vector $k$-tuple $(\xx_1,\ldots,\xx_k)$ of $T$ and an element $g_i\in \mathrm{SO}(V_i)$ for all $i\in[k]$, then $(g_1(\xx_1),\ldots,g_k(\xx_k))$ is a singular vector $k$-tuple of $(g_1,\ldots, g_k)\cdot T$. This is because the contraction (or scalar product) in the $i$-th factor is preserved by the action of $\mathrm{SO}(V_i)$. 
\end{Remark}

\begin{Lemma}\label{lem: characterize singular vector tuples}
Consider a tuple $([\xx_1],\ldots, [\xx_k])\in\PP$. For all $i\in[k]$, define the subspace
\begin{align}\label{eq: def W_i}
\begin{split}
W_i &\coloneqq\xx_1^{\omega_1}\otimes\cdots\otimes\xx_i^{\omega_i-1}\langle\xx_i\rangle^\perp\otimes\cdots\otimes\xx_k^{\omega_k}\\
&=\{\xx_1^{\omega_1}\otimes\cdots\otimes\xx_i^{\omega_i-1}\cdot\ww_i\otimes\cdots\otimes\xx_k^{\omega_k}\mid\ww_i\in\langle\xx_i\rangle^\perp\}\subset S^{\bomega}V\,,
\end{split}
\end{align}
where $\langle\xx_i\rangle^\perp$ is the orthogonal complement of $\langle\xx_i\rangle$ in $V_i$ with respect to the fixed quadratic form $q_i$.\\
Given a tensor $T\in S^{\bomega}V$, we have that $([\xx_1],\ldots, [\xx_k])\in\PP$ is a singular vector $k$-tuple of $T$ if and only if $T\in\left(W_1+\cdots+W_k\right)^\perp$, where in this case the sign $\perp$ denotes the orthogonal complement with respect to the Frobenius inner product in $S^{\bomega}V$.
\end{Lemma}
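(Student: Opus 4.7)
The plan is to reformulate the singular vector condition of \eqref{eq: def singular vector tuple matrix} as a $q_i$-orthogonality statement inside each $V_i$, and then transfer it to a $q_F$-orthogonality statement in $S^{\bomega}V$ by means of an adjunction between tensor contraction and the Frobenius inner product. Since every step in this reduction is an equivalence, both implications of the lemma come out at once.

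First I would unpack the rank condition. Since $\xx_i\neq 0$, the rank inequality in \eqref{eq: def singular vector tuple matrix} for the index $i$ is equivalent to the existence of $\sigma_i\in\CC$ with $T(\xx_1^{\omega_1}\otimes\cdots\otimes\xx_i^{\omega_i-1}\otimes\cdots\otimes\xx_k^{\omega_k})=\sigma_i\xx_i$. Because $q_i$ is nondegenerate on $V_i$ (the isotropic quadric $Q_i$ being smooth), we have $(\langle\xx_i\rangle^\perp)^\perp=\langle\xx_i\rangle$, so this condition is equivalent to
\[
q_i\bigl(T(\xx_1^{\omega_1}\otimes\cdots\otimes\xx_i^{\omega_i-1}\otimes\cdots\otimes\xx_k^{\omega_k}),\,\ww_i\bigr)=0\quad\text{for every }\ww_i\in\langle\xx_i\rangle^\perp.
\]
Note that this rephrasing remains valid when $\xx_i$ is isotropic, since only nondegeneracy of $q_i$ on the ambient space $V_i$ is used, not any restriction to $\langle\xx_i\rangle$.

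The second ingredient is the adjunction identity
\[
q_i\bigl(T(\xx_1^{\omega_1}\otimes\cdots\otimes\xx_i^{\omega_i-1}\otimes\cdots\otimes\xx_k^{\omega_k}),\,\ww_i\bigr)=q_F\bigl(T,\,\xx_1^{\omega_1}\otimes\cdots\otimes\xx_i^{\omega_i-1}\cdot\ww_i\otimes\cdots\otimes\xx_k^{\omega_k}\bigr)
\]
for every $\ww_i\in V_i$, which is essentially the identity already hidden in the preceding remark (where it is specialized, on the right-hand side, to $\sigma_i\,q_i(\xx_i,\ww_i)$). I would verify it on a decomposable tensor $T=\yy_1^{\omega_1}\otimes\cdots\otimes\yy_k^{\omega_k}$ by a short polarization in the symmetric factor, using the Bombieri--Weyl identity $q_i(\yy_i^{\omega_i},\xx_i^{\omega_i-1}\cdot\ww_i)=q_i(\yy_i,\xx_i)^{\omega_i-1}\,q_i(\yy_i,\ww_i)$, and extend it to arbitrary $T$ by bilinearity. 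Combining the two steps, the singular vector condition at index $i$ becomes precisely $q_F(T,V)=0$ for every $V\in W_i$, i.e., $T\perp W_i$ in the Frobenius inner product. Imposing this simultaneously for all $i\in[k]$ yields $T\in\bigcap_{i=1}^kW_i^\perp=(W_1+\cdots+W_k)^\perp$, giving both directions of the lemma. The only possible pitfall is keeping the normalization constants straight in the adjunction formula, but this is a routine symmetric-algebra computation rather than a genuine obstacle.
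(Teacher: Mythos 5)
Your proposal is correct and takes essentially the same route as the paper: the paper's proof simply asserts that the $i$-th rank condition is equivalent to $T\in W_i^\perp$ and then passes to $\bigcap_i W_i^\perp=(W_1+\cdots+W_k)^\perp$, and your argument (unpack the rank condition to proportionality, use nondegeneracy of $q_i$ so that $(\langle\xx_i\rangle^\perp)^\perp=\langle\xx_i\rangle$, then transfer via the contraction/Frobenius adjunction) is exactly the content of that asserted equivalence, spelled out. Your observation that the reformulation does not break down for isotropic $\xx_i$ is a worthwhile detail the paper leaves implicit.
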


\begin{proof}
By definition, the tuple $([\xx_1],\ldots, [\xx_k])\in\PP$ is a singular vector $k$-tuple of $T$ if and only if the relations in \eqref{eq: def singular vector tuple matrix} hold. For a fixed $i\in[k]$, the corresponding relation in \eqref{eq: def singular vector tuple matrix} is equivalent to $T\in W_i^\perp$. Hence $([\xx_1],\ldots, [\xx_k])\in\PP$ is a singular vector $k$-tuple of $T$ if and only if $T\in W_1^\perp\cap\cdots\cap W_k^\perp$, and one has the equality $W_1^\perp\cap\cdots\cap W_k^\perp=\left(W_1+\cdots+W_k\right)^\perp$.
\end{proof}

\noindent We introduce an incidence correspondence that will play a fundamental role in the upcoming proofs.

\begin{Definition}\label{def: spectralvariety}
Let $\Pi_{\nn,\bomega}=\PP(S^{\bomega}V) \times \PP$. The {\em spectral variety of type $(\nn,\bomega)$} is 
\begin{equation}\label{eq: def Sigma}
\Sigma_{\nn,\bomega} \coloneqq \{ ([T], [\xx_1],\ldots, [\xx_k])\in \Pi_{\nn,\bomega} \mid \text{$([\xx_1],\ldots,[\xx_k])$ is a singular vector $k$-tuple for $T$}\}\,.
\end{equation}
\end{Definition}

\begin{Remark}\label{rmk: spectral is Zar closed}
Note that $\Sigma_{\nn,\bomega}$, as described set-theoretically above, is closed in the Zariski topology: equipped with its reduced structure, it is the subvariety whose ideal is the radical ideal of
\[
(J_1+\cdots+J_k)\colon\left(\prod_{i=1}^k\langle \xx_i\rangle\right)^\infty\subset\C[\xx_1,\ldots,\xx_k]\,,
\]
where $\langle\xx_i\rangle$ is the ideal generated by the coordinates of $\xx_i$ and
\[
J_i=\left\langle\mbox{$2\times 2$ minors of }
\begin{pmatrix}
T(\xx_1^{\omega_1}\otimes\cdots\otimes\xx_i^{\omega_i-1}\otimes\cdots\otimes\xx_k^{\omega_k})\\
\xx_i
\end{pmatrix}
\right\rangle\quad\forall\,i\in[k]\,.
\]
\end{Remark}

\noindent In the following, we will often consider the diagram
\begin{equation}\label{eq: diagram spectral variety}
\begin{tikzcd}
& \Sigma_{\nn,\bomega} \arrow{dl}[swap]{\alpha} \arrow{dr}{\beta} & \\
\PP(S^{\bomega}V) & & \PP\,,
\end{tikzcd}
\end{equation}
where $\alpha$ and $\beta$ denote the two projections along the two factors of $\Pi_{\nn,\bomega}$.

\begin{Theorem}\label{thm: fibers}
Let $\Sigma_{\nn,\bomega} \subset \Pi_{\nn,\bomega}$ be the spectral variety of type $(\nn,\bomega)$. The projection $\beta\colon\Sigma_{\nn,\bomega}\to\PP$ is surjective and every fiber of $\beta$ over a $k$-tuple $([\xx_1],\ldots, [\xx_k])$ with $r$ isotropic components is a projective subspace in $\PP(S^{\bomega}V)$ of codimension
\begin{equation}\label{eq: dim fiber q r isotropic factors}
\codim\left(\beta^{-1}([\xx_1],\ldots, [\xx_k])\right)=\sum_{i=1}^k(n_i-1)-\max\{0,r-1\}\,.
\end{equation}
\end{Theorem}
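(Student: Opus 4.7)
The plan is to apply Lemma \ref{lem: characterize singular vector tuples}, which identifies the fiber $\beta^{-1}([\xx_1],\ldots,[\xx_k])$ with the projectivization of the orthogonal complement $(W_1+\cdots+W_k)^\perp\subset S^{\bomega}V$ with respect to the Frobenius product. Hence the codimension of the fiber in $\PP(S^{\bomega}V)$ equals $\dim(W_1+\cdots+W_k)$, reducing the theorem to a pure linear algebra computation. Since $\xx_i\neq 0$, multiplication by $\xx_i^{\omega_i-1}$ is injective $V_i\to S^{\omega_i}V_i$, and restricting to $\langle\xx_i\rangle^\perp$ gives $\dim W_i=n_i-1$. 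It remains to measure the overlap among the $W_i$'s.

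The key observation is that the decomposable tensor $T_0\coloneqq\xx_1^{\omega_1}\otimes\cdots\otimes\xx_k^{\omega_k}$ lies in $W_i$ if and only if $\xx_i^{\omega_i}\in\xx_i^{\omega_i-1}\langle\xx_i\rangle^\perp$, equivalently if and only if $\xx_i$ is isotropic. Letting $I\subset[k]$ denote the set of isotropic indices, with $|I|=r$, one has $T_0\in W_i$ precisely for $i\in I$. I would show this is the only source of overlap by introducing a multi-grading. Choose, for each $i$, a subspace $U_i\subset V_i$ complementary to $\langle\xx_i\rangle$, taking $U_i=\langle\xx_i\rangle^\perp$ when $i\notin I$, and any such $U_i$ when $i\in I$; in the isotropic case the intersection $U_i'\coloneqq U_i\cap\langle\xx_i\rangle^\perp$ has dimension $n_i-2$ and $\langle\xx_i\rangle^\perp=\langle\xx_i\rangle\oplus U_i'$. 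These data induce a multi-grading on $S^{\bomega}V$ through the decompositions $S^{\omega_i}V_i=\bigoplus_{d=0}^{\omega_i}\xx_i^d\,S^{\omega_i-d}U_i$, in which $T_0$ spans the top multi-graded component of multi-degree $(\omega_1,\ldots,\omega_k)$, and each $W_i$ sits in the sum of this top component with the component obtained by lowering the $i$-th entry by one.

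Set $W_i^{\mathrm{red}}\coloneqq W_i$ if $i\notin I$, and $W_i^{\mathrm{red}}\coloneqq\xx_1^{\omega_1}\otimes\cdots\otimes\xx_i^{\omega_i-1}U_i'\otimes\cdots\otimes\xx_k^{\omega_k}$ if $i\in I$; in the latter case one obtains the internal decomposition $W_i=\langle T_0\rangle\oplus W_i^{\mathrm{red}}$, while in the former $W_i^{\mathrm{red}}$ already misses the top component. The pieces $W_i^{\mathrm{red}}$ then lie in pairwise distinct shifted multi-graded components and are each disjoint from $\langle T_0\rangle$, so the sum is direct and
\[
\dim(W_1+\cdots+W_k)=\epsilon_r+\sum_{i\in I}(n_i-2)+\sum_{i\notin I}(n_i-1)=\sum_{i=1}^k(n_i-1)-\max\{0,r-1\},
\]
where $\epsilon_r=1$ if $r\ge 1$ and $\epsilon_r=0$ otherwise. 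This matches \eqref{eq: dim fiber q r isotropic factors}. Surjectivity of $\beta$ follows because $\sum_{i=1}^k(n_i-1)<\prod_{i=1}^k\binom{n_i+\omega_i-1}{\omega_i}=\dim S^{\bomega}V$ (an immediate induction on $k$), so each fiber is a nonempty projective subspace.

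The main obstacle I foresee is the isotropic bookkeeping in the middle step: one must choose the complements $U_i$ carefully so that $U_i\cap\langle\xx_i\rangle^\perp$ has the expected dimension $n_i-2$ and, correspondingly, isolate the contribution of $T_0$ from each $W_i$ with $i\in I$. Once the multi-graded decomposition is in place, the directness of the resulting sum is automatic and the codimension formula drops out.
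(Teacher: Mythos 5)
Your proof is correct, and it takes a genuinely different route from the paper's. Both arguments reduce, via the same orthogonality lemma, to computing $\dim(W_1+\cdots+W_k)$; but the paper then runs an induction on $k$, computing at each step the dimension of the overlap $(W_1+\cdots+W_{k-1})\cap W_k$ through a case analysis on which $\xx_i$ are isotropic and, in the mixed case, using the decomposition $\langle\xx_1\rangle\oplus\langle\xx_1\rangle^\perp=V_1$ for a non-isotropic factor to show the intersection is trivial. Your proposal instead exhibits the sum directly. By choosing a complement $U_i$ of $\langle\xx_i\rangle$ in each $V_i$ and imposing the induced multi-grading on $S^{\bomega}V$, you split each $W_i$ into the line $\langle T_0\rangle$ (present only for isotropic $i$) plus a reduced part $W_i^{\mathrm{red}}$, with $\langle T_0\rangle$ sitting in multi-degree $\bomega$ and $W_i^{\mathrm{red}}$ sitting in the shifted multi-degree $\bomega-\ee_i$; since these multi-degrees are pairwise distinct, the sum $\langle T_0\rangle+\sum_i W_i^{\mathrm{red}}$ is automatically direct, and the dimension formula drops out with no induction and no separate treatment of the $k=2$ base case. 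The price you pay is the bit of bookkeeping needed when $i$ is isotropic, since then $\langle\xx_i\rangle\subset\langle\xx_i\rangle^\perp$ and you must verify $\dim(U_i\cap\langle\xx_i\rangle^\perp)=n_i-2$ and $\langle\xx_i\rangle^\perp=\langle\xx_i\rangle\oplus U_i'$; both check out. Your surjectivity observation that $\sum_i(n_i-1)<\dim S^{\bomega}V$ is also fine (indeed $\sum_i(n_i-1)\le\prod_i n_i-1\le\prod_i\binom{n_i+\omega_i-1}{\omega_i}-1$), whereas the paper leaves surjectivity implicit. Overall your multi-graded decomposition is cleaner and handles all isotropic configurations uniformly, at the cost of setting up the grading; the paper's induction is more elementary in its ingredients but requires the stepwise intersection analysis.
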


\begin{proof}
Consider a tuple $([\xx_1],\ldots, [\xx_k])\in\PP$. The fiber $\beta^{-1}([\xx_1],\ldots, [\xx_k])$ is isomorphic to the projective subspace of classes $[T]\in\PP(S^{\bomega}V)$ such that $([\xx_1],\ldots, [\xx_k])$ is a singular vector $k$-tuple of $T$. By Lemma \ref{lem: characterize singular vector tuples}, we have that 
\[
\beta^{-1}([\xx_1],\ldots, [\xx_k])\cong\PP(\left(W_1+\cdots+W_k\right)^\perp)\,,
\]
where the subspaces $W_i$ have been defined in \eqref{eq: def W_i}. Hence 
\[
\codim\left(\beta^{-1}([\xx_1],\ldots, [\xx_k])\right)=\dim(W_1+\cdots+W_k)\,.
\]
The proof goes by induction on $k$. First, we assume $k=2$. We have
\[
\dim(W_1+W_2)=\dim(W_1)+\dim(W_2)-\dim(W_1\cap W_2)=\sum_{i=1}^2(n_i-1)-\dim(W_1\cap W_2)\,,
\]
so it remains to compute $\dim(W_1\cap W_2)$ depending on the number of isotropic vectors in the pair $([\xx_1],[\xx_2])$. Consider nonzero elements $F_1\in W_1$ and $F_2\in W_2$, written explicitly as $F_1=\xx_1^{\omega_1-1}\cdot\ww_1\otimes\xx_2^{\omega_2}$ and $F_2=\xx_1^{\omega_1}\otimes\xx_2^{\omega_2-1}\cdot\ww_2$ for some vectors $\ww_1\in\langle\xx_1\rangle^\perp$ and $\ww_2\in\langle\xx_2\rangle^\perp$. Then necessarily $F_1=F_2$ only if $q_1(\xx_1)=q_2(\xx_2)=0$, in which case the equality is attained for $\ww_i\in\langle\xx_i\rangle\subset\langle\xx_i\rangle^\perp$, $i\in[2]$. In particular $\dim(W_1\cap W_2)=1$ and the identity \eqref{eq: dim fiber q r isotropic factors} is satisfied. Otherwise $W_1\cap W_2=\{0\}$ if at least one among $\xx_1$, $\xx_2$ is not isotropic, and \eqref{eq: dim fiber q r isotropic factors} is satisfied as well.

Now we assume that the identity \eqref{eq: dim fiber q r isotropic factors} is satisfied for $k-1$ factors among the given $k$. Without loss of generality, we apply the induction step to the first $k-1$ factors. In particular
\[
\dim(W_1+\cdots+W_k)=\dim(W_1+\cdots+W_{k-1})+\dim(W_k)-\dim\left[(W_1+\cdots+W_{k-1})\cap W_k\right]\,,
\]
so it remains to compute $\dim\left[(W_1+\cdots+W_{k-1})\cap W_k\right]$ depending on the number of isotropic vectors in the tuple $([\xx_1],\ldots, [\xx_k])$. Pick nonzero tensors $F_1\in W_1+\cdots+W_{k-1}$ and $F_2\in W_k$, which can be written explicitly as
\begin{align*}
F_1 &= \left(\sum_{i=1}^{k-1}\xx_1^{\omega_1}\otimes\cdots\otimes\xx_i^{\omega_i-1}\cdot\ww_i\otimes\cdots\otimes\xx_{k-1}^{\omega_{k-1}}\right)\otimes\xx_k^{\omega_k}\,,\ \ww_i\in\langle\xx_i\rangle^\perp\\
F_2 &= \xx_1^{\omega_1}\otimes\cdots\otimes\xx_{k-1}^{\omega_{k-1}}\otimes\xx_k^{\omega_k-1}\cdot\ww_k\,,\ \ww_k\in\langle\xx_k\rangle^\perp\,.
\end{align*}
If $q_k(\xx_k)\neq 0$, then necessarily $F_1\neq F_2$, so $(W_1+\cdots+W_{k-1})\cap W_k=\{0\}$. In this case the identity \eqref{eq: dim fiber q r isotropic factors} follows applying the induction step.
Otherwise $q_k(\xx_k)=0$ and in this case $F_1=F_2$ only if $\ww_k\in\langle\xx_k\rangle\subset\langle\xx_k\rangle^\perp$ and
\begin{equation}\label{eq: identity k-1 factors}
\sum_{i=1}^{k-1}\xx_1^{\omega_1}\otimes\cdots\otimes\xx_i^{\omega_i-1}\cdot\ww_i\otimes\cdots\otimes\xx_{k-1}^{\omega_{k-1}} = \xx_1^{\omega_1}\otimes\cdots\otimes\xx_{k-1}^{\omega_{k-1}}\,.
\end{equation}
Now suppose that $q_i(\xx_i)\neq 0$ for some $i\in[k-1]$, for simplicity $i=1$. Since $\langle\xx_1\rangle^\perp\oplus\langle\xx_1\rangle=\C^{n_1}$, we have the decomposition
\[
\left(\xx_1^{\omega_1-1}\langle\xx_1\rangle^\perp\otimes\bigotimes_{i=2}^{k-1}S^{\omega_i}V_i\right)\oplus\left(\xx_1^{\omega_1-1}\langle\xx_1\rangle\otimes\bigotimes_{i=2}^{k-1}S^{\omega_i}V_i\right)=\xx_1^{\omega_1-1}\C^{n_1}\otimes\bigotimes_{i=2}^{k-1}S^{\omega_i}V_i\,.
\]
On one hand, the left-hand side of \eqref{eq: identity k-1 factors} can be rewritten as
\[
\xx_1^{\omega_1-1}\cdot\ww_1\otimes\xx_2^{\omega_2}\otimes\cdots\otimes\xx_{k-1}^{\omega_{k-1}}+\sum_{i=2}^{k-1}\xx_1^{\omega_1}\otimes\cdots\otimes\xx_i^{\omega_i-1}\cdot\ww_i\otimes\cdots\otimes\xx_{k-1}^{\omega_{k-1}}\,.
\]
The first summand is a nonzero element of $\xx_1^{\omega_1-1}\langle\xx_1\rangle^\perp\otimes\bigotimes_{i=2}^{k-1}S^{\omega_i}V_i$, whereas the second summand is a nonzero element of $\xx_1^{\omega_1-1}\langle\xx_1\rangle\otimes\bigotimes_{i=2}^{k-1}S^{\omega_i}V_i$.
On the other hand, the right-hand side of \eqref{eq: identity k-1 factors} lives only in $\xx_1^{\omega_1-1}\langle\xx_1\rangle\otimes\bigotimes_{i=2}^{k-1}S^{\omega_i}V_i$. Hence $F_1\neq F_2$ and $(W_1+\cdots+W_{k-1})\cap W_k=\{0\}$. The identity \eqref{eq: dim fiber q r isotropic factors} again follows applying the induction step.

We conclude that, if $(W_1+\cdots+W_{k-1})\cap W_k\neq\{0\}$, then $q_i(\xx_i)=0$ for all $i\in[k]$.
Under this assumption, we have that $\dim\left[(W_1+\cdots+W_{k-1})\cap W_k\right]=1$: indeed, the only possibility to have $F_1=F_2$ is that $\ww_i\in\langle\xx_i\rangle$ for all $i\in[k]$. Hence, by induction,
\[
\dim(W_1+\cdots+W_k) = \dim(W_1+\cdots+W_{k-1})+\dim(W_k)-\dim\left[(W_1+\cdots+W_{k-1})\cap W_k\right]=\sum_{i=1}^{k}(n_i-1)-(k-1)\,,
\]
which agrees with \eqref{eq: dim fiber q r isotropic factors} when $r=k$.
\end{proof}

\begin{Definition}[{\bf Kalman strata}]\label{def: kalmanstrata}
For every $i\in[k]$, let $Z_i\subset \PP( V_i)$ be a projective variety and consider the product $Z = \prod_{i=1}^k Z_i$. Given a subset $J\subset[k]$, we define the product
\[
Q_J\coloneqq \prod_{j\in J}Q_j\times\prod_{j\notin J}[\PP(V_j)\setminus Q_j]\subset\PP\,.
\]
The {\it partially isotropic Kalman strata} with respect to $Z$ are
\[
\kappa^{J}_{\nn, \bomega}(Z)\coloneqq\overline{\{ T\in \PP(S^{\bomega}V) \mid\mbox{$T$ has a singular vector $k$-tuple $([\xx_1], \ldots, [\xx_k])\in Z\cap Q_J$}\}}\,.
\]
In particular,
\begin{itemize}
\item for $J=\emptyset$, we call $\kappa^{nor}_{\nn, \bomega}(Z)\coloneqq\kappa^{\emptyset}_{\nn, \bomega}(Z)$ the {\it normalized Kalman variety} with respect to $Z$ because we may assume that all components $\xx_i$ are normalized with respect to the inner product $q_i$. We also denote by $Q_{nor}$ the product $Q_{\emptyset}=\prod_{j=1}^k[\PP(V_j)\setminus Q_j]$. 
\item for $J=[k]$, we call $\kappa^{iso}_{\nn, \bomega}(Z)\coloneqq\kappa^{[k]}_{\nn, \bomega}(Z)$ the {\it totally isotropic Kalman variety} with respect to $Z$ because all components $\xx_i$ are isotropic. In this case
\[
\kappa^{iso}_{\nn, \bomega}(Z)=\{ T\in \PP(S^{\bomega}V) \mid\mbox{$T$ has a singular vector $k$-tuple $([\xx_1], \ldots, [\xx_k])\in Z\cap Q$}\}\,,
\]
namely the right-hand side is already closed. Here we use the shorthand $Q\coloneqq Q_{[k]}=\prod_{i=1}^k Q_i$. If $Z=Q$, we indicate this variety simply with $\kappa^{iso}_{\nn, \bomega}$.
\end{itemize}
Note that all the loci $\kappa^{J}_{\nn, \bomega}(Z)$ are closed by definition, and they may be reducible. We generally refer to all of them (to their irreducible components) as {\it Kalman strata}.
If $\bomega={\bf 1}$, we use the shorthand $\kappa^{J}_{\nn}(Z)=\kappa^{J}_{\nn,{\bf 1}}(Z)$.
\end{Definition}

\begin{Definition}\label{def: genKalman}
For every $i\in[k]$, let $Z_i\subset \PP( V_i)$ be a projective variety.
The {\it generalized Kalman variety} with respect to $Z=\prod_{i=1}^k Z_i$ is
\begin{equation}\label{eq: def genKalman}
\kappa_{\nn, \bomega}(Z)\coloneqq\{T\in \PP(S^{\bomega}V) \mid\mbox{$T$ has a singular vector $k$-tuple $([\xx_1], \ldots, [\xx_k])\in Z$}\}\,.
\end{equation}
The Kalman strata $\kappa_{\nn, \bomega}^J(Z)$ of Definition \ref{def: kalmanstrata} are closed subvarieties of the generalized Kalman variety. Moreover, if $Z=Q=\prod_{i=1}^kQ_i$, then $\kappa_{\nn, \bomega}(Q)=\kappa^{iso}_{\nn, \bomega}$. 
\end{Definition}

\noindent Similarly as in Remark \ref{rmk: spectral is Zar closed}, the right-hand side of \eqref{eq: def genKalman} is closed in the Zariski topology.

\begin{Definition}
Consider the spectral variety $\Sigma_{\nn,\bomega}\subset \Pi_{\nn,\bomega}$.
For every $i\in[k]$, let $Z_i\subset \PP(V_i)$ be a projective variety and consider the product $Z= \prod_{i=1}^k Z_i\subset\PP$.
The {\em spectral variety of type $(\nn,\bomega)$ restricted to $Z$} is the incidence variety $\Sigma_{\nn,\bomega}(Z) \coloneqq \Sigma_{\nn,\bomega}\cap[\PP(S^{\bomega}V)\times Z]$. In particular,
\[
\Sigma_{\nn,\bomega}(Z) = \{ (T, [\xx_1],\ldots, [\xx_k])\mid \text{$([\xx_1],\ldots,[\xx_k])\in Z$ is a singular vector $k$-tuple for $T$}\}\,.
\]
\end{Definition}

Similarly as in \eqref{eq: diagram spectral variety}, in the following proofs we will consider the diagram of projections

\begin{equation}\label{eq: diagram spectral variety Z}
\begin{tikzcd}
& \Sigma_{\nn,\bomega}(Z) \arrow{dl}[swap]{\alpha_Z} \arrow{dr}{\beta_Z} & \\
\PP(S^{\bomega}V) & & Z\,.
\end{tikzcd}
\end{equation}

Before we proceed, we recall a standard lemma needed in the next proofs. The (omitted) proof relies on a direct application of \cite[Proposition 7.10]{Hart} and \cite[Exercise 11.4.C]{Vakil}. 

\begin{Lemma}\label{lem: components}
Let $\varphi \colon M\to N$ be a surjective morphism of projective varieties and assume that $N$ is connected.
Suppose there exists a finite collection $\{S_{i}\}_{i\in I}$ of irreducible quasi-projective subvarieties of $N$ such that $N = \bigcup_{i\in I} S_{i}$ with the property that, for each
$i\in I$, the restriction $\varphi_{|\varphi^{-1}(S_{i})}\colon \varphi^{-1}(S_{i})\to S_{i}$ has equidimensional linear projective fibers. Then:
\begin{enumerate}
\item[$(i)$] Each closed subvariety $\overline{\varphi ^{-1}(S_{i})}\subset M$ is irreducible.
\item[$(ii)$] $M$ is connected and $M = \bigcup_{i\in I}\overline{\varphi^{-1}(S_{i})}$. In particular, the irreducible components of $M$ have the form $\overline{\varphi^{-1}(S_{i})}$ for some $i\in I$.
\item[$(iii)$] One has $\overline{\varphi^{-1}(S_{i})}\setminus\varphi^{-1}(S_{i})\subset\bigcup_{j\in I\setminus\{i\}}\varphi^{-1}(S_{j})$.
\end{enumerate}
\end{Lemma}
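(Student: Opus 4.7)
The plan is to handle the three assertions in order, with the bulk of the work concentrated in (i); parts (ii) and (iii) then follow by essentially formal manipulations. The key technical input, supplied by \cite[Prop.~7.10]{Hart} and \cite[Exercise~11.4.C]{Vakil}, is the standard irreducibility criterion: if $\psi\colon X\to Y$ is a surjective morphism of finite type with $Y$ irreducible and every fiber nonempty, irreducible, and of a common dimension, then $X$ is irreducible.

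For (i), I would apply this criterion directly to the restriction $\varphi_{|\varphi^{-1}(S_i)}\colon \varphi^{-1}(S_i)\to S_i$. By hypothesis $S_i$ is irreducible and the morphism is surjective onto it; each fiber is a linear projective subspace, hence isomorphic to some $\PP^m$ and therefore irreducible; and equidimensionality is built into the assumption. The criterion yields irreducibility of $\varphi^{-1}(S_i)$, and taking Zariski closure preserves irreducibility, so $\overline{\varphi^{-1}(S_i)}$ is irreducible. For (ii), surjectivity of $\varphi$ combined with the cover $N=\bigcup_i S_i$ gives the set-theoretic identity $M=\varphi^{-1}(N)=\bigcup_i\varphi^{-1}(S_i)$, and passing to closures yields $M=\bigcup_i\overline{\varphi^{-1}(S_i)}$. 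Since $I$ is finite and each $\overline{\varphi^{-1}(S_i)}$ is irreducible by (i), the irreducible components of $M$ are exactly the maximal elements of this finite list. Connectedness of $M$ I would deduce from the fact that $N$ is connected (being an irreducible projective variety) and that every fiber of $\varphi$ is a connected projective space, via Stein factorization: a proper surjection with connected fibers over a connected base has connected source.

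For (iii), continuity of $\varphi$ gives $\overline{\varphi^{-1}(S_i)}\subset\varphi^{-1}(\overline{S_i})$, since $\varphi^{-1}(\overline{S_i})$ is closed and contains $\varphi^{-1}(S_i)$. The set $\overline{S_i}\setminus S_i$ is disjoint from $S_i$ but contained in $N=\bigcup_j S_j$, so each of its points lies in some $S_j$ with $j\neq i$. Hence
\[
\overline{\varphi^{-1}(S_i)}\setminus\varphi^{-1}(S_i)\subset\varphi^{-1}(\overline{S_i}\setminus S_i)\subset\bigcup_{j\neq i}\varphi^{-1}(S_j),
\]
as claimed. The main obstacle, in my view, is simply pinning down the precise scheme-theoretic hypotheses needed to invoke the irreducibility criterion cleanly in (i): one must read "equidimensional linear projective fiber" as producing an irreducible (reduced) fiber of constant dimension, so that \cite[Prop.~7.10]{Hart} and \cite[Exercise~11.4.C]{Vakil} apply without additional flatness or reducedness assumptions. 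Once that bookkeeping is settled, the remaining arguments are routine.
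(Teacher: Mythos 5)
Your proposal tracks the paper's intended proof exactly: the paper omits the proof and refers to the same two references you invoke, \cite[Proposition~7.10]{Hart} and \cite[Exercise~11.4.C]{Vakil}, and your deduction of (ii) and (iii) from (i) is the natural one and is correct.

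One small but real imprecision: the irreducibility criterion you state --- ``surjective morphism of finite type, irreducible base, nonempty irreducible equidimensional fibers implies irreducible total space'' --- is false without a properness (or at least closedness) hypothesis. For instance, with $Y=\mathbb{A}^1$ and $X$ the disjoint union of the open set $D(x)\subset\mathbb{A}^1$ and the closed point $\{0\}$, each mapping to $Y$ by inclusion, the map is surjective of finite type, $Y$ is irreducible, and every fiber is a single reduced point, yet $X$ is reducible. The criterion as actually given in the cited references carries a properness/closedness hypothesis, and you should note explicitly why it applies here: $\varphi\colon M\to N$ is a morphism of projective varieties, hence proper, and $\varphi^{-1}(S_i)\to S_i$ is its base change along $S_i\hookrightarrow N$, hence also proper, even though $S_i$ is only quasi-projective. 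With that observation inserted, (i) is sound; and your connectedness argument in (ii) likewise uses properness of $\varphi$ (together with the implicit assumption that $N$ is irreducible, which is how the lemma is used throughout the paper), so that part is fine as written.
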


\begin{Proposition}\label{prop: DimensionsKalmanstrata}
For every $i\in[k]$, let $Z_i\subset \PP(V_i)$ be an irreducible projective variety of codimension $\delta_i$ and consider the product $Z= \prod_{i=1}^k Z_i\subset\PP$.
We assume that $Z_i\not\subset Q_i$ for all $i\in[k]$ and let $\delta\coloneqq\sum_{i=1}^k\delta_i$. Then: 
\begin{enumerate}
\item[$(i)$] The variety $\kappa^{nor}_{\nn, \bomega}(Z)$ is irreducible of dimension $\dim(\kappa^{nor}_{\nn, \bomega}(Z)) = \dim(\PP(S^{\bomega}V))-\delta$. 
\item[$(ii)$] For any $J\neq\emptyset$, we have $\dim(\kappa^{J}_{\nn, \bomega}(Z))<\dim(\kappa^{nor}_{\nn, \bomega}(Z))$. 
\end{enumerate}
\end{Proposition}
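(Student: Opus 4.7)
The plan is to analyze both parts via the spectral incidence diagram \eqref{eq: diagram spectral variety Z}, using the fiber dimension formula of Theorem~\ref{thm: fibers} together with the structural Lemma~\ref{lem: components}, applied to the stratification of $Z$ by the subsets $Z\cap Q_J$ for $J\subseteq[k]$.

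For part (i), I would first observe that since each $Z_i$ is irreducible and not contained in $Q_i$, the locus $Z\cap Q_{nor}=\prod_i(Z_i\setminus Q_i)$ is a nonempty open (hence dense) subset of $Z$, and is irreducible of dimension $\dim Z=\sum_i(n_i-1-\delta_i)$. Applying Theorem~\ref{thm: fibers} with $r=0$, the restriction of $\beta_Z$ over $Z\cap Q_{nor}$ has equidimensional linear projective fibers of codimension $\sum_i(n_i-1)$. Lemma~\ref{lem: components}(i) then gives that $\overline{\beta_Z^{-1}(Z\cap Q_{nor})}$ is irreducible of dimension
\[
\dim Z+\dim\PP(S^{\bomega}V)-\sum_{i=1}^k(n_i-1)=\dim\PP(S^{\bomega}V)-\delta.
\]
Since $\kappa^{nor}_{\nn,\bomega}(Z)=\overline{\alpha_Z(\beta_Z^{-1}(Z\cap Q_{nor}))}$ is the image of an irreducible variety, it is irreducible, and the upper bound on its dimension follows. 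For the equality I would argue that $\alpha_Z$ is generically finite on $\beta_Z^{-1}(Z\cap Q_{nor})$: the whole map $\alpha\colon\Sigma_X\to\PP(S^{\bomega}V)$ is generically finite, because from the $r=0$ case of Theorem~\ref{thm: fibers} the open stratum $\Sigma_X^\circ\coloneqq\beta^{-1}(Q_{nor})$ has dimension $\dim\PP(S^{\bomega}V)$ and surjects onto $\PP(S^{\bomega}V)$ by the Friedland--Ottaviani result guaranteeing that a generic tensor has a (finite positive) number of singular vector tuples, all normalized. To transfer generic finiteness to the restriction $\alpha_Z$, I would exhibit a concrete tensor $T_0\in\beta^{-1}(p_0)$ for a chosen $p_0\in Z\cap Q_{nor}$ such that $T_0$ has only finitely many singular vector tuples; this is possible because the branch locus of $\alpha|_{\Sigma_X^\circ}$ is a proper closed subvariety of $\PP(S^{\bomega}V)$ and intersects the linear fiber $\beta^{-1}(p_0)$ in a proper closed subvariety. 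Upper semi-continuity of fiber dimensions then yields the generic finiteness of $\alpha_Z|_{\beta_Z^{-1}(Z\cap Q_{nor})}$, forcing the dimension equality.

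For part (ii), fix $J\subseteq[k]$ nonempty and set $r=|J|\ge 1$. The irreducibility of $Z_j$ together with $Z_j\not\subset Q_j$ gives $\dim(Z_j\cap Q_j)\le\dim Z_j-1=n_j-2-\delta_j$ for each $j\in J$, whence
\[
\dim(Z\cap Q_J)\le\sum_{j\in J}(n_j-2-\delta_j)+\sum_{j\notin J}(n_j-1-\delta_j)=\dim Z-r.
\]
By Theorem~\ref{thm: fibers}, fibers of $\beta_Z$ over points of $Z\cap Q_J$ are linear of codimension $\sum_i(n_i-1)-(r-1)$ in $\PP(S^{\bomega}V)$, so after combining with the base dimension bound,
\[
\dim\overline{\beta_Z^{-1}(Z\cap Q_J)}\le(\dim Z-r)+\dim\PP(S^{\bomega}V)-\sum_{i=1}^k(n_i-1)+(r-1)=\dim\PP(S^{\bomega}V)-\delta-1.
\]
Since $\kappa^{J}_{\nn,\bomega}(Z)$ is a finite union of closures of $\alpha_Z$-images of such preimages (over the irreducible components of $Z\cap Q_J$), we conclude $\dim\kappa^{J}_{\nn,\bomega}(Z)\le\dim\PP(S^{\bomega}V)-\delta-1<\dim\kappa^{nor}_{\nn,\bomega}(Z)$, which is (ii).

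The main obstacle is the generic finiteness step in (i): once this is secured, both parts reduce to a combinatorial dimension count driven by Theorem~\ref{thm: fibers}. The crucial input is that the assumption $Z_i\not\subset Q_i$ simultaneously forces density of $Z\cap Q_{nor}$ in $Z$ (needed for the irreducibility of the top Kalman stratum) and forces each $Z_j\cap Q_j$ to drop in dimension (needed to separate the $J\neq\emptyset$ strata in dimension from the normalized one); the gain of $\max\{0,r-1\}$ in the fiber codimension from isotropic tuples is exactly balanced, minus one, by the loss of $r$ in the base dimension, which is what produces the strict inequality.
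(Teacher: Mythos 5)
Your overall architecture matches the paper's, and part~(ii) and the dimension counts are correct. The gap is in the generic-finiteness step of~(i). You need a $p_0\in Z\cap Q_{nor}$ and a $T_0\in\beta_Z^{-1}(p_0)$ with only finitely many singular vector $k$-tuples, and you assert this holds ``because the branch locus of $\alpha|_{\Sigma_X^\circ}$ is a proper closed subvariety of $\PP(S^{\bomega}V)$ and intersects the linear fiber $\beta^{-1}(p_0)$ in a proper closed subvariety.'' The second half of this sentence is precisely the claim that requires proof, and it does not follow merely from the branch locus $B$ being a proper closed subvariety: $Z$ is an arbitrary product of proper subvarieties of the $\PP(V_i)$, so $\bigcup_{p\in Z\cap Q_{nor}}\beta^{-1}(p)$ could a priori sit entirely inside $B$, since a codimension-$\ge 1$ subvariety of $\PP(S^{\bomega}V)$ can certainly contain a whole linear subspace $\beta^{-1}(p_0)$ of codimension $\sum_i(n_i-1)$.

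What rescues the argument, and what the paper's proof uses, is the $\mathrm{SO}(V_1)\times\cdots\times\mathrm{SO}(V_k)$-equivariance of the notion of singular vector $k$-tuple (Remark~\ref{rmk: actionSO}) together with the transitivity of this action on the locus of non-isotropic tuples $Q_{nor}$. Since $B$ is $\mathrm{SO}$-invariant and the group carries any $p_0\in Z\cap Q_{nor}$ to any other point of $Q_{nor}$, a containment $\beta^{-1}(p_0)\subset B$ for one such $p_0$ would force $\beta^{-1}(Q_{nor})\subset\alpha^{-1}(B)$; since $\alpha(\beta^{-1}(Q_{nor}))$ is dense in $\PP(S^{\bomega}V)$, this would contradict the properness of $B$. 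Once you insert this equivariance argument, your upper-semicontinuity step and the remainder of your proof go through and essentially coincide with the paper's.
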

\begin{proof}

$(i)$. Consider the restricted spectral variety $\Sigma_{\nn,\bomega}(Z)$ along with the two projections $\alpha_Z$ and $\beta_Z$ introduced in diagram \eqref{eq: diagram spectral variety Z}.
The morphism $\beta_Z$ is surjective and the fiber $\beta_Z^{-1}([\xx_1],\ldots, [\xx_k])$ is set-theoretically the collection of tensors $T\in \PP(S^{\bomega}V)$ that possess $([\xx_1],\ldots, [\xx_k])\in Z$ as a singular vector $k$-tuple.

\noindent Let $Q=\prod_{i=1}^k Q_i\subset\PP$ and $Y = \overline{\beta_Z^{-1}(Z\setminus Q)}\subset \Sigma_{\nn,\bomega}(Z)$. Now consider the morphism
\[
\psi\coloneqq {\beta_Z}_{|\beta_Z^{-1}(Z\setminus Q)}\colon \beta_Z^{-1}(Z\setminus Q)\to Z\setminus Q\,.
\]
By Theorem \ref{thm: fibers}, $\psi$ is a surjective morphism with equidimensional linear projective fibers of codimension $\sum_{i=1}^k(n_i-1)$ in $\PP(S^{\bomega}V)$. 
By Lemma \ref{lem: components}$(i)$, $Y$ is irreducible.

Now consider the first projection $\alpha_Z\colon\Sigma_{\nn,\bomega}(Z)\to \PP(S^{\bomega}V)$. The morphism $\alpha_Z$ is projective and hence closed. One has $\kappa^{nor}_{\nn, \bomega}(Z) = \alpha_Z(Y)$. To see this, notice that by definition we have:
\[
\alpha_Z(\beta_Z^{-1}(Z\setminus Q)) = \{ T\in \PP(S^{\bomega}V) \mid\mbox{$T$ has a singular vector $k$-tuple $([\xx_1], \ldots, [\xx_k])\in Z\setminus Q$}\}\,.
\]
One finds $\kappa^{nor}_{\nn, \bomega}(Z)=\overline{\alpha_Z(\beta_Z^{-1}(Z\setminus Q))}=\overline{\alpha_Z(\overline{\beta_Z^{-1}(Z\setminus Q)})}=\overline{\alpha_Z(Y)}=\alpha_Z(Y)$. Here the first equality follows by Definition \ref{def: kalmanstrata}, the second by continuity of $\alpha_Z$,  and the last follows from $\alpha_Z$ being closed. Moreover, since $Y$ is irreducible, then so is $\kappa^{nor}_{\nn, \bomega}(Z)$.

In order to calculate the codimension of $\kappa^{nor}_{\nn, \bomega}(Z)$, note that $\dim(Y)=\dim(\PP(S^{\bomega}V))+\dim(Z)-\sum_{i=1}^k(n_i-1)=\dim(\PP(S^{\bomega}V))-\delta$. 
Observe that the general fiber of the restriction of $\alpha_Z$ to $Y$ is finite. Indeed, a point of the product $Z\setminus Q$ is the equivalence class of a tuple of non-isotropic vectors. The group $\mathrm{SO}(V_1)\times\cdots\times\mathrm{SO}(V_k)$ acts transitively on equivalence classes of non-isotropic vectors. Thus a general tensor in the fiber under $\beta_Z$ of a point in $Z\setminus Q$ has a finite number of singular vector $k$-tuples. This implies the finiteness of the restriction of $\alpha_Z$ to $Y$. Then $\dim(\kappa^{nor}_{\nn, \bomega}(Z)) = \dim(Y)$. So $\mathrm{codim}(\kappa^{nor}_{\nn, \bomega}(Z)) = \delta$.

\medskip
\noindent $(ii)$. Fix a nonempty subset $J\subset[k]$ and consider the quasi-projective subvariety $Z\cap Q_J$ of $Z$. 
By Theorem \ref{thm: fibers}, the fiber of $\beta_Z$ at every point of $Z\cap Q_J$ has dimension $\dim(\PP(S^{\bomega}V))-\sum_{i=1}^k(n_i-1)+(|J|-1)$.
Let $Y_J = \overline{\beta^{-1}(Z\cap Q_J)}$. Thus 
\[
\dim(Y_J) = \sum_{j\notin J} \dim(Z_j) + \sum_{j\in J}[\dim(Z_j)-1] + \dim(\PP(S^{\bomega}V))-\sum_{i=1}^k(n_i-1)+(|J|-1) = \dim(\PP(S^{\bomega}V))-\delta - 1\,.
\]
As before, we find that $\kappa^{J}_{\nn, \bomega}(Z) = \alpha_Z(Y_J)$.  Thus the dimension of $\kappa^{J}_{\nn, \bomega}(Z)$ is at most 
that of $Y_J$, which finishes the proof. 
\end{proof}

\noindent The last step before proving Theorem \ref{thm: OSh, partially symmetric case} is establishing the irreducibility of the spectral variety $\Sigma_{\nn,\bomega}$.

\begin{Proposition}\label{prop: irredsigmax}
The spectral variety $\Sigma_{\nn,\bomega}\subset \Pi_{\nn,\bomega}$ is irreducible and $\Sigma_{\nn,\bomega} = \overline{\beta^{-1}(\PP\setminus Q)}$. 
\end{Proposition}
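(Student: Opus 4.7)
The plan is to apply Lemma \ref{lem: components} to the projection $\beta\colon\Sigma_X\to\PP$ with the stratification of $\PP$ by isotropy pattern, $\PP=\bigsqcup_{J\subset[k]}Q_J$. Each $Q_J$ is irreducible and quasi-projective (as a product of smooth irreducible quadrics $Q_j$ and the irreducible open complements $\PP(V_j)\setminus Q_j$), and by Theorem \ref{thm: fibers} the restriction of $\beta$ over $Q_J$ has equidimensional linear projective fibers. Lemma \ref{lem: components} therefore applies: each $\overline{\beta^{-1}(Q_J)}$ is irreducible and $\Sigma_X=\bigcup_J\overline{\beta^{-1}(Q_J)}$. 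In particular $\overline{\beta^{-1}(Q_\emptyset)}=\overline{\beta^{-1}(\PP\setminus Q)}$ is irreducible.

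A direct dimension computation from Theorem \ref{thm: fibers} yields $\dim\overline{\beta^{-1}(Q_\emptyset)}=\dim\PP(S^{\bomega}V)$, whereas $\dim\overline{\beta^{-1}(Q_J)}=\dim\PP(S^{\bomega}V)-1$ for every $J\neq\emptyset$. Both assertions of the proposition therefore reduce to showing the inclusion $\overline{\beta^{-1}(Q_J)}\subset\overline{\beta^{-1}(\PP\setminus Q)}$ for every nonempty $J$. I would prove this by induction on $|J|$ via a deformation argument exhibiting every point of $\beta^{-1}(Q_J)$ as an algebraic limit of points of $\beta^{-1}(\PP\setminus Q)$. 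For $|J|=1$, Theorem \ref{thm: fibers} tells us that the fiber at $[\xx^0]$ and at every nearby non-isotropic tuple share the same codimension $N\coloneqq\sum_i(n_i-1)$ in $\PP(S^{\bomega}V)$; so under a generic one-parameter perturbation $\xx_j(\epsilon)=\xx_j^0+\epsilon v$ with $q_j(\xx_j^0,v)\neq 0$, the subspaces $W(\epsilon)=W_1(\epsilon)+\cdots+W_k(\epsilon)$ form an algebraic family of constant dimension in the Grassmannian $\mathrm{Gr}(N,S^{\bomega}V)$ and their orthogonals converge to $W(0)^\perp$. Thus every $T_0\in W(0)^\perp$ lifts to a family $T(\epsilon)\in W(\epsilon)^\perp$ with $T(\epsilon)\to T_0$ as $\epsilon\to 0$, so $(T_0,[\xx^0])\in\overline{\beta^{-1}(\PP\setminus Q)}$.

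The hard step, and the main obstacle, is the inductive case $|J|\geq 2$: here the fiber dimension at $[\xx^0]$ is strictly larger than at non-isotropic tuples nearby, jumping by $|J|-1$ (Theorem \ref{thm: fibers}), so a single one-parameter path produces only a proper subspace of $\beta^{-1}([\xx^0])$ in the limit. To overcome this, I would use a multi-parameter deformation $\xx_j(\epsilon,\boldsymbol{v})=\xx_j^0+\epsilon v_j$ for $j\in J$ and argue that the union $\bigcup_{\boldsymbol{v}}\lim_{\epsilon\to 0}W(\epsilon,\boldsymbol{v})^\perp$ fills out $W(0)^\perp$. Equivalently, as $\boldsymbol{v}=(v_j)_{j\in J}$ ranges, the extra directions in $\lim_{\epsilon\to 0}W(\epsilon,\boldsymbol{v})/W(0)$ span a subspace complementary to $W(0)$ of dimension $|J|-1$. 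This is verified by an explicit first-order computation of the hyperplanes $\langle\xx_j(\epsilon)\rangle^\perp\subset V_j$ at $\epsilon=0$, exploiting the isotropy of each $\xx_j^0$ for $j\in J$. Once this sweeping claim is established, the induction closes via the chain $\beta^{-1}(Q_J)\subset\overline{\beta^{-1}(Q_{J\setminus\{j\}})}\subset\overline{\beta^{-1}(\PP\setminus Q)}$ for any $j\in J$, forcing $\Sigma_X=\overline{\beta^{-1}(\PP\setminus Q)}$ as an irreducible variety.
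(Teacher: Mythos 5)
Your decomposition $\Sigma_X=\bigcup_J\overline{\beta^{-1}(Q_J)}$ via Lemma~\ref{lem: components}, your dimension count, and the reduction to showing $\overline{\beta^{-1}(Q_J)}\subset\overline{\beta^{-1}(\PP\setminus Q)}$ for $J\neq\emptyset$ are all correct, and for $|J|=1$ (where by Theorem~\ref{thm: fibers} the fiber does not jump) the Grassmannian convergence argument is sound. But your strategy is genuinely different from the paper's, and the comparison is instructive. The paper avoids all explicit degeneration: it realizes $\Sigma_X$ as the zero locus $Z(s)$ of a section of the Friedland--Ottaviani bundle $E$ of rank $\sum_i(n_i-1)$, notes that $\overline{\beta^{-1}(\PP\setminus Q)}$ is one irreducible component of $Z(s)$ with codimension exactly $\mathrm{rank}(E)$, observes from the fiber dimensions that any other component of $Z(s)$ would necessarily have \emph{strictly larger} codimension, and then invokes Krull's principal ideal theorem on local trivializations to forbid components of codimension $>\mathrm{rank}(E)$. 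That argument rules out the extra components wholesale without needing to exhibit any tensor over an isotropic tuple as a limit of tensors over non-isotropic ones; it trades your degeneration analysis for a local complete intersection bound.

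The issue with your route is that the ``sweeping claim'' is not a cosmetic gap: it is the entire content of the proposition, and it is genuinely delicate. Once a fiber jumps, you must show that as the perturbation direction $v_j$ varies, the limit subspaces $W_\infty(v_j)\supsetneq W(0)$ really do move, so that $\bigcup_{v_j}\PP(W_\infty(v_j)^\perp)$ is Zariski dense in $\PP(W(0)^\perp)$. Nothing you have written rules out the scenario in which the extra line $W_\infty(v_j)/W(0)$ is independent of $v_j$, in which case every limit lands in a \emph{fixed} hyperplane of $W(0)^\perp$ and the inductive inclusion $\beta^{-1}(Q_J)\subset\overline{\beta^{-1}(Q_{J\setminus\{j\}})}$ fails for the remaining points of that fiber. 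The ``explicit first-order computation exploiting the isotropy of each $\xx_j^0$'' that you defer to is precisely where the proof lives, and until it is carried out (and shown to produce a nonconstant family of limiting lines), the argument is incomplete. If you want to keep a degeneration-flavored proof, you would do better to first prove that $\Sigma_X$ is a local complete intersection of codimension $\mathrm{rank}(E)$ --- e.g.\ via the section $s$ of $E$ --- and let that do the work of excluding small components, which is exactly what the paper does.
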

\begin{proof}
The product $\PP = \prod_{i=1}^k \PP(V_i)$ comes equipped with the $i$-th projection map $\pi_i\colon\PP\to\PP(V_i)$. On the product $\PP$, consider the pull-back vector bundles
\[
\varepsilon_i\coloneqq \pi_i^*\mathcal{Q}_{\PP(V_i)}(\omega_1,\ldots,\omega_{i-1},\omega_i-1,\omega_{i+1},\ldots,\omega_k)\quad\forall\,i\in[k]\,.
\]
The fiber of each vector bundle $\varepsilon_i$ at $[\xx]=[\xx_1^{\omega_1}\otimes\cdots\otimes \xx_k^{\omega_k}]$ is isomorphic to
\[
\mathrm{Hom}(\xx_1^{\omega_1}\otimes\cdots\otimes \xx_i^{\omega_i-1}\otimes\cdots\otimes \xx_k^{\omega_k}, V_i/\langle \xx_i\rangle)\,.
\]
Let $\bomega_i=(\omega_1,\ldots,\omega_i-1,\ldots,\omega_k)$. Every tensor $T\in S^{\bomega}V$ induces a section $s_T$ of $\varepsilon_i$ which corresponds to the composition
\[
\langle \xx_1^{\omega_1}\otimes\cdots\otimes \xx_i^{\omega_i-1}\otimes\cdots\otimes \xx_k^{\omega_k}\rangle\stackrel{i}{\hookrightarrow}S^{\bomega_i}V\stackrel{T}{\longrightarrow}V_i\stackrel{\pi}{\twoheadrightarrow}\frac{V_i}{\langle \xx_i\rangle}.
\]
This section vanishes on $[\xx]$ if and only if
\[
\mathrm{rank}
\begin{pmatrix}
T(\xx_1^{\omega_1}\otimes\cdots\otimes \xx_i^{\omega_i-1}\otimes\cdots\otimes \xx_k^{\omega_k})\\
\xx_i
\end{pmatrix}
\le 1\,.
\]
Define the vector bundle $\varepsilon\coloneqq\bigoplus_{i=1}^k\varepsilon_i$ on $\PP$.
For every $T\in S^{\bomega}V$, the diagonal section
$(s_T,\ldots,s_T)\in H^0(\varepsilon)$ vanishes on $\xx$ if and only if $(\xx_1,\ldots,\xx_k)$ is a partially symmetric singular $k$-tuple of $T$.

\noindent Using the vector bundle $\varepsilon$ and the natural projections $\tilde{\alpha}\colon \Pi_{\nn,\bomega}\to \PP(S^{\bomega}V)$ and $\tilde{\beta}\colon \Pi_{\nn,\bomega}\to\PP$, we introduce the {\it Friedland-Ottaviani vector bundle} on $\Pi_{\nn,\bomega}$: 
\[
E\coloneqq \tilde{\alpha}^*(\mathcal{O}_{\PP(S^{\bomega}V)}(1))\otimes \tilde{\beta}^*(\varepsilon)\,.
\]
The vector bundle $E$ has rank $\sum_{i}(n_i-1)$  and was first introduced by Friedland and Ottaviani for studying singular vector $k$-tuples; see \cite[Theorem 12]{FO}. The vanishing locus $Z(s)$ of the section $s\in H^0(E)$ given by the map $T\mapsto (s_T, \ldots, s_T)$ equals the spectral variety $\Sigma_{\nn,\bomega}\subset \Pi_{\nn,\bomega}$ \cite[Lemma 9]{FO}. 

The variety $\overline{\beta^{-1}(\PP\setminus Q)}$ is an irreducible component of $\Sigma_{\nn,\bomega}$ by Lemma \ref{lem: components}$(i)$.
By Theorem \ref{thm: fibers} and Lemma \ref{lem: components}$(ii)$, if some another irreducible component of $\Sigma_{\nn,\bomega}$ exists, its codimension must be strictly higher than $\codim(\overline{\beta^{-1}(\PP\setminus Q)}) = \sum_{i}(n_i-1) = \mathrm{rank}(E)$.
In any affine open $U\subset \Pi_{\nn,\bomega}$ trivializing the vector bundle $E$, $Z(s)|_{U} = \Sigma_{\nn,\bomega}|_{U}$ is defined by the vanishing of $\mathrm{rank}(E)$ equations.
Therefore, by Krull's principal ideal theorem, on the affine chart $U\subset X$ there cannot be any irreducible component of $\Sigma_{\nn,\bomega}|_{U}$ whose codimension is strictly higher than $\mathrm{rank}(E)$.
Gluing an affine open cover of $\Pi_{\nn,\bomega}$ trivializing $E$, then one finds that there cannot be other irreducible components of $\Sigma_{\nn,\bomega}$ besides $\overline{\beta^{-1}(\PP\setminus Q)}$. This proves the equality and irreducibility.  
\end{proof}

\noindent We are ready to prove Theorem \ref{thm: OSh, partially symmetric case}. 

\begin{proof}[Proof of Theorem \ref{thm: OSh, partially symmetric case}]
Consider the product $\Pi_{\nn,\bomega}$ and the spectral variety $\Sigma_{\nn,\bomega}(Z)$ restricted to $Z$.
Moreover, we consider the projections $\beta_Z$ and $\alpha_Z$ of diagram \eqref{eq: diagram spectral variety Z}.
By Theorem \ref{thm: fibers} and Lemma \ref{lem: components}$(ii)$, the irreducible components of $\Sigma_{\nn,\bomega}(Z)$ are, up to permuting factors, of the form $Y'_J = \overline{\beta_Z^{-1}(Z'_J)}$, where $Z'_J$ is an irreducible component of $Z\cap Q_J$ for some $J\subset[k]$, as $Z\cap Q_J$ might be reducible.
Note that the generalized Kalman variety is $\kappa_{\nn, \bomega}(Z) = \alpha_Z(\Sigma_{\nn,\bomega}(Z))$.
Therefore the irreducible components of the generalized Kalman variety are of the form $\alpha_Z(Y'_J)$, each one of those is a Kalman stratum. 

From the dimensions calculated in Proposition \ref{prop: DimensionsKalmanstrata}, it follows that the unique highest-dimensional irreducible component of $\kappa_{\nn, \bomega}(Z)$ is the normalized Kalman variety $\kappa^{nor}_{\nn, \bomega}(Z)$. Therefore codimension and degree of the generalized Kalman variety coincide with the ones of $\kappa^{nor}_{\nn, \bomega}(Z)$. 

\noindent Again by Lemma \ref{lem: components}$(ii)$, $\Sigma_{\nn,\bomega}(Z)$ is connected. Since $\alpha_Z$ is continuous, $\kappa_{\nn, \bomega}(Z)$ is connected.

We are left with the computation of $\deg(\kappa_{\nn, \bomega}(Z))=\deg(\kappa^{nor}_{\nn, \bomega}(Z))$. As in the proof of Proposition \ref{prop: irredsigmax}, we work with the Friedland-Ottaviani vector bundle $E$ of rank $\sum_{i}(n_i-1)$ on $\Pi_{\nn,\bomega}$. 

Denote by $D=\prod_{i=1}^k\binom{n_i+\omega_i-1}{\omega_i}-1$ the dimension of $\PP(S^{\bomega}V)$.
We recall that $\alpha$ and $\beta$ appearing in diagram \eqref{eq: diagram spectral variety} are the restrictions of $\tilde{\alpha}$ and $\tilde{\beta}$ to $\Sigma_{\nn,\bomega}$. By Proposition \ref{prop: irredsigmax}, $\Sigma_{\nn,\bomega} = Z(s) = \overline{\beta^{-1}(\PP\setminus Q)}$ and has codimension $\sum_{i}(n_i-1)$ in $\Pi_{\nn,\bomega}$. 
Therefore the top Chern class $c_{\sum_i(n_i-1)}(E)$ of $E$ satisfies $c_{\sum_i(n_i-1)}(E) = [Z(s)]\in A^{*}(\Pi_{\nn,\bomega})$, where $A^{*}(\Pi_{\nn,\bomega}) =\C[h,t_1,\ldots,t_k]/(h^{D+1},t_1^{n_1},\ldots,t_k^{n_k})$ ($h=c_1(\mathcal{O}_{\PP(S^{\bomega}V)}(1))$ and $t_i=c_1(\mathcal{O}_{\PP(V_i)}(1))$) is the Chow ring of $\Pi_{\nn,\bomega}$. 

We now look at the intersection $Z(s)\cap \beta^{-1}(Z)\subset \Pi_{\nn,\bomega}$, which may be reducible. Note that this intersection 
contains $\overline{\beta^{-1}(Z\setminus Q)}$, which has codimension $\sum_i(n_i-1)+\mathrm{codim}(Z)=\sum_i(n_i-1)+\delta$. We now check whether the intersection  contains other components whose codimension is smaller than or equal to the codimension of  $\overline{\beta^{-1}(Z\setminus Q)}$. By Lemma \ref{lem: components}$(iii)$, $Z(s)\setminus \beta^{-1}(\PP\setminus Q)$ is a union of quasi-projective varieties of the form $\beta^{-1}(\prod_{j\in J}Q_j\times\prod_{j\notin J}[\PP(V_j)\setminus Q_j])$.
Therefore $(Z(s)\setminus \beta^{-1}(\PP\setminus Q))\cap \beta^{-1}(Z)$ is a union of quasi-projective varieties of the form $\beta^{-1}(\prod_{j\in J}Q_j\cap Z_j\times\prod_{j\notin J}[Z_j\setminus Q_j])$. As in the proof of Proposition \ref{prop: DimensionsKalmanstrata}, all these quasi-projective 
varieties have dimensions strictly smaller than the dimension of $\overline{\beta^{-1}(Z\setminus Q)}$. 
In conclusion, the intersection $Z(s)\cap \beta^{-1}(Z)\subset \Pi_{\nn,\bomega}$ contains a unique irreducible component in codimension $\sum_i(n_i-1)+\delta$; the other potential components are in strictly higher codimensions. 

The previous paragraph shows that the element  $c_{\sum_i(n_i-1)}(E)\prod_{i=1}^k\deg(Z_i)\cdot \beta^*(\pi_i^*(\mathcal{O}_{Z_i}(1)^{\delta_i}))$ of the Chow ring is nonzero in codimension $\sum_i(n_i-1)+\delta$ and zero in smaller codimension. Thus the intersection number 
\[
\alpha^*(c_1(\mathcal{O}_{\PP(S^{\bomega}V)}(1))^{D-\delta}\cdot c_{\sum_i(n_i-1)}(E)\prod_{i=1}^k\deg(Z_i)\cdot \beta^*(\pi_i^*(\mathcal{O}_{Z_i}(1)^{\delta_i}))
\]
in the Chow ring $A^{*}(\Pi_{\nn,\bomega})$ is the degree of $\alpha(\overline{\beta^{-1}(Z\setminus Q)}) = \kappa^{nor}_{\nn, \bomega}(Z)$. 

The Friedland-Ottaviani vector bundle $E$ is the direct sum of $k$ summands. By Euler's exact sequence, each of these summands has Chern polynomial
\[
\frac{(1+\hat{t_i}+h)^{n_i}}{1+\widehat{t_i}+h-t_i}=\frac{(1+\hat{t_i}+h)^{n_i-1}}{1-\frac{t_i}{1+\widehat{t_i}+h}}=\sum_{j=0}^\infty(1+\hat{t_i}+h)^{n_i-1-j}t_i^j=\sum_{j=0}^{n_i-1}(1+\hat{t_i}+h)^{n_i-1-j}t_i^j\,,
\]
where $\widehat{t_i}\coloneqq\left(\sum_{j=1}^{k}\omega_j t_j\right)- t_i$. The statement then follows as in the proof of \cite[Theorem 3.4]{OSh},
noting that we carry as extra factor the product of the degrees $\deg(Z_i)$. 
\end{proof}

\begin{Remark}
We do not know whether the generalized Kalman variety $\kappa_{\nn, \bomega}(Z)$, under the assumptions of Theorem \ref{thm: OSh, partially symmetric case}, is always irreducible. The issue is that we do not know in general how the Kalman strata intersect. 
\end{Remark}

The next examples suggest that these varieties might be subtle (even for matrices) and show irreducibility is not true when the assumptions $Z_i\not\subset Q_i$ of Theorem \ref{thm: OSh, partially symmetric case} are weakened. 

\begin{Example}
Let $k=2, \nn = (2,3)$, and $\bomega=(1,1)$. 
We assume that $\PP(V_1)\cong\PP_{\mC}^1$, $\PP(V_2)\cong\PP_{\mC}^2$, $Q_1=\mathcal{V}(x_{1,1}^2+x_{1,2}^2)\subset\PP_{\mC}^1$ and $Q_2=\mathcal{V}(x_{2,1}^2+x_{2,2}^2+x_{2,3}^2)\subset\PP_{\mC}^2$.
On one hand, the product $\PP_{\mC}^1\times Q_2$ is irreducible in $\PP_{\mC}^1\times\PP_{\mC}^2$. 
On the other hand, the generalized Kalman variety $\kappa_{\nn}(\PP_{\mC}^1\times Q_2)$ is a degree $8$ hypersurface with three irreducible components: 
\begin{enumerate}
\item[$(i)$] the totally isotropic Kalman variety $\kappa^{iso}_{\nn}$, that is a degree $4$ hypersurface with two irreducible components. Using coordinates $a_{ij}$ for the space $\PP(\C^2\otimes\C^3)\cong\PP_{\mC}^5$, its equation is
\[
4(a_{11}a_{21}+a_{12}a_{22}+a_{13}a_{23})^2+(a_{11}^{2}+a_{12}^{2}+a_{13}^{2}-a_{21}^{2}-a_{22}^{2}-a_{23}^{2})^2\,.
\]
The first summand corresponds to the Euclidean inner product of the two rows of $A=(a_{ij})\in\C^2\otimes\C^3$.
The second summand is the difference between the Euclidean norms of the two rows of $A$.
\item[$(ii)$] the Kalman stratum $\kappa^{\{2\}}_{\nn}(\PP_{\mC}^1\times Q_2)$, that is a degree $4$ irreducible hypersurface of equation
\[
(a_{11}a_{22}-a_{12}a_{21})^2+(a_{11}a_{23}-a_{13}a_{21})^2+(a_{12}a_{23}-a_{13}a_{22})^2\,,
\]
namely the sum of squares of the three maximal minors of $A$.
\end{enumerate}
If $\nn = (2,4)$, then the generalized Kalman variety $\kappa_{\nn}(\PP_{\mC}^1\times Q_2)$, where $Q_2=\mathcal{V}(x_{2,1}^2+\cdots+x_{2,4}^2)\subset\PP_{\mC}^3$, is a degree $8$ hypersurface in $\PP(\C^2\otimes\C^4)\cong\PP_{\mC}^7$ with three analogous irreducible components. 

\noindent For $\nn = (3,3)$, one finds 
$\kappa_{\nn}(\PP_{\mC}^2\times Q_2) = \kappa^{iso}_{\nn} = \kappa_{\nn}(Q_1\times\PP_{\mC}^2)$.
\end{Example}

\begin{Example}
Let us consider the matrix case $k=2$, $\nn=(4,4)$, $\bdelta=(2,1)$ and $\bomega=(1,1)$. Consider the twisted cubic curve $Z_1\subset \PP_{\mC}^3$ and a general quadric surface $Z_2\subset\PP(V_2)\cong\PP_{\mC}^3$. Applying Theorem \ref{thm: OSh, partially symmetric case}, we have
\[
\deg(\kappa^{nor}_{\nn}(Z)) = \deg(Z_1)\deg(Z_2)d((4,4),(2,1),(1,1))\,, 
\]
where $d((4,4),(2,1),(1,1))$ is the coefficient of the monomial $h^3t_1t_2^2$ in the polynomial
\[
\sum_{i,j=0}^3(t_1+h)^{3-j}(t_2+h)^{3-1}t_1^it_2^j=\cdots+20\,h^3t_1t_2^2+\cdots\,,
\]
therefore $\deg(\kappa^{nor}_{\nn}(Z)) = 3 \cdot 2\cdot 20=120$.
We provide a \verb+Macaulay2+ code \cite{GS} to verify symbolically this degree computation.
We speed up our degree computation by working over the finite field $\mathbb Z/\mathbb Z_{101}$ and restricting to a $3$-dimensional subspace $H\subset\PP(\C^4\otimes\C^4)$, since $\kappa^{nor}_{\nn}(Z)$ has codimension $2+1=3$:

{\small
\begin{Verbatim}[commandchars=\\\{\}]
K = \textcolor{purple}{ZZ}/101; R = K[u_0..u_3,x_(1,1)..x_(2,4)];
coeffs = \textcolor{ForestGreen}{toList}((1,1)..(4,4)); \textcolor{blue}{for} f \textcolor{blue}{in} coeffs \textcolor{blue}{do} a_f = \textcolor{ForestGreen}{sum}(m+1, i-> \textcolor{ForestGreen}{random}(K)*u_i);
M = \textcolor{ForestGreen}{sum}(4, i-> \textcolor{ForestGreen}{sum}(4, j-> a_(i+1,j+1)*x_(1,i+1)*x_(2,j+1)));
xx1 = \textcolor{ForestGreen}{matrix}\{\{x_(1,1)..x_(1,4)\}\}; xx2 = \textcolor{ForestGreen}{matrix}\{\{x_(2,1)..x_(2,4)\}\};
I = \textcolor{ForestGreen}{minors}(2,\textcolor{ForestGreen}{contract}(xx1,M)||xx1)+\textcolor{ForestGreen}{minors}(2,\textcolor{ForestGreen}{contract}(xx2,M)||xx2)+
    \textcolor{ForestGreen}{ideal}(\textcolor{ForestGreen}{sum}(4, i-> x_(1,i+1)^2)-1,\textcolor{ForestGreen}{sum}(4, i-> x_(2,i+1)^2)-1);
\end{Verbatim}
}
Here \verb+I+ is the ideal of relations among the singular vector pairs in the symbolic coordinates of \verb+M+. We impose the existence of a singular vector pair on $Z=Z_1\times Z_2$ as follows:
{\small
\begin{Verbatim}[commandchars=\\\{\}]
Z_1 = \textcolor{ForestGreen}{minors}(2, \textcolor{ForestGreen}{matrix}\{\{x_(1,1)..x_(1,3)\},\{x_(1,2)..x_(1,4)\}\});
Z_2 = \textcolor{ForestGreen}{ideal}((\textcolor{ForestGreen}{symmetricPower}(2,xx2)*\textcolor{ForestGreen}{random}(K^10,K^1))_(0,0));
J = I + Z_1 + Z_2;
\end{Verbatim}
}
Finally, the ideal of $\kappa^{nor}_{\nn}(Z)$ is computed with 
{\small
\begin{Verbatim}[commandchars=\\\{\}]
Kalman = \textcolor{ForestGreen}{eliminate}(\textcolor{ForestGreen}{toList}(x_(1,1)..x_(2,4)), J);
\textcolor{ForestGreen}{degree} Kalman  \textcolor{orange}{-- this confirms that the variety has degree 120}
\end{Verbatim}
}
\end{Example}

\begin{Remark}\label{rmk: degree Kalman symmetric case}
Note that Theorem \ref{thm: OSh, partially symmetric case} includes the symmetric case. Consider an irreducible variety $Z\subset \PP(V)$ not contained in the isotropic quadric $Q\subset\PP(V)$ and the generalized Kalman variety $\kappa_{n, \omega}(Z)\coloneqq\kappa_{\nn, \bomega}(Z)$, where $\bdelta=(\delta)$, $\delta=\codim(Z)$, $\nn = (n)$ and $\bomega = (\omega)$. Its degree is $d(n,\delta,\omega)\deg(Z)$, where $d(n,\delta,\omega)$ is the coefficient of the monomial $h^\delta t^{n-\delta-1}$ in the polynomial
\[
\frac{[(\omega-1)t+h]^n-t^n}{(\omega-1)t+h-t} = \sum_{i=0}^{n-1}[(\omega-1)t+h]^{n-1-i}t^i = \sum_{i=0}^{n-1}\sum_{j=0}^{n-1-i}\binom{n-1-i}{j}(\omega-1)^{n-1-i-j}t^{n-1-j}h^j\,.
\]
The coefficient of the monomial $h^\delta t^{n-\delta-1}$ in the last polynomial is
\[
\sum_{i=0}^{n-1}\binom{n-1-i}{\delta}(\omega-1)^{n-\delta-1-i}=\sum_{i=0}^{n-\delta-1}\binom{n-1-i}{\delta}(\omega-1)^{n-\delta-1-i}=\sum_{j=0}^{n-\delta-1}\binom{\delta+j}{j}(\omega-1)^j\,,
\]
which recovers \cite[Theorem 2.3]{OSh}.
\end{Remark}

\section{The totally isotropic Kalman variety}\label{sec: iso}
In Theorem \ref{thm: OSh, partially symmetric case} we assumed that each $Z_i$ is not contained in the corresponding isotropic quadric $Q_i$. 
On the opposite side, we exhibit a description of totally isotropic Kalman varieties as dual varieties. Before we prove the precise statement, 
we need the following lemma. 

\begin{Lemma}\label{lem: when Segre-Veronese embedding isotropic quadrics is hypersurface}
Consider the product $Q=\prod_{i=1}^k Q_i$ of isotropic quadrics $Q_i\subset\PP(V_i)$.
Let $v_{\nn,\bomega}\colon\PP\to\PP(S^\bomega V)$ be the degree-$\bomega$ Segre-Veronese embedding of $\PP=\prod_{i=1}^k\PP(V_i)$. Then the dual variety $[v_{\nn,\bomega}(Q)]^\vee$ is a hypersurface for every choice of $\nn$ and $\bomega$.
\end{Lemma}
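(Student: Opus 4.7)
The plan is to identify the dual variety $[v_{\nn,\bomega}(Q)]^\vee$, via the Frobenius inner product on $S^\bomega V$, with the totally isotropic Kalman variety $\kappa^{iso}_{\nn,\bomega}$, and then to show that the latter has codimension exactly $1$ in $\PP(S^{\bomega}V)$. For the identification, I would first compute the affine tangent space to $v_{\nn,\bomega}(Q)$ at a smooth point $v_{\nn,\bomega}(p)$, where $p = ([p_1],\ldots,[p_k]) \in Q$. Since each $p_i$ is isotropic, the tangent space to $Q_i$ at $[p_i]$ equals $\PP(\langle p_i\rangle^\perp)$, so the product rule for the Segre--Veronese embedding yields that this affine tangent space equals $W_1 + \cdots + W_k$, with $W_i$ as in \eqref{eq: def W_i}. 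By Theorem \ref{thm: fibers} applied with $r = k$, the dimension in $S^{\bomega}V$ is $\sum_{i=1}^k n_i - 2k + 1$. A tensor $T$ then defines (via the Frobenius form) a hyperplane tangent to $v_{\nn,\bomega}(Q)$ at $v_{\nn,\bomega}(p)$ if and only if $T \in (W_1 + \cdots + W_k)^\perp$, which by Lemma \ref{lem: characterize singular vector tuples} is equivalent to $([p_1],\ldots,[p_k])$ being a singular vector $k$-tuple of $T$. Taking the Zariski closure as $p$ varies over $Q$ yields the set-theoretic identification $[v_{\nn,\bomega}(Q)]^\vee = \kappa^{iso}_{\nn,\bomega}$.

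With this identification in hand, I still need to show $\dim \kappa^{iso}_{\nn,\bomega} = D - 1$, where $D = \dim\PP(S^{\bomega}V)$. By Theorem \ref{thm: fibers}, the incidence variety $\Sigma_X(Q)$ fibers over $Q$ with fibers of dimension $D - \sum n_i + 2k - 1$, giving $\dim \Sigma_X(Q) = D - 1$; hence $\dim \kappa^{iso}_{\nn,\bomega} \le D - 1$ via the projection $\alpha_Q$. To upgrade this to an equality, I would verify that $\alpha_Q$ has generically finite fibers on at least one irreducible component of $\Sigma_X(Q)$. By upper semi-continuity of fiber dimension, it suffices to exhibit a single $T^* \in \kappa^{iso}_{\nn,\bomega}$ admitting only finitely many totally isotropic singular vector $k$-tuples.

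The main obstacle lies in producing such a $T^*$. The Friedland--Ottaviani theorem implies that the full projection $\alpha\colon \Sigma_X \to \PP(S^{\bomega}V)$ from the spectral variety is generically finite, so the defective locus $B \coloneqq \{T\in\PP(S^{\bomega}V) : T \text{ admits infinitely many singular vector tuples}\}$ is a proper closed subset of $\PP(S^{\bomega}V)$. Fixing a smooth point $p^* \in Q$, every tensor in the linear subspace $L^* = (W_1 + \cdots + W_k)^\perp \subset S^{\bomega}V$ attached to $p^*$ has $p^*$ as a totally isotropic singular vector $k$-tuple. Provided one can verify $L^* \not\subset B$, a generic $T^* \in L^*$ has finitely many singular vector tuples overall, and in particular finitely many in $Q$. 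The cleanest route to $L^* \not\subset B$ is a tangent-space computation at $(T^*, p^*) \in \Sigma_X(Q)$: describe $\ker d\alpha_Q$ as the space of infinitesimal deformations $(\xi_1,\ldots,\xi_k)$ of $p^*$ inside $Q$ that preserve the singular-vector-tuple equations with $T^*$ held fixed, and then a rank analysis of the associated linear system should show that this kernel vanishes for generic $T^* \in L^*$. A complementary route is to verify the claim in small cases---e.g.\ $k = 1$, $\bomega = (1)$, where $v_{\nn,\bomega}(Q)$ is itself a smooth quadric whose dual is a quadric hypersurface---and then extend to general $(\nn,\bomega)$ by a degeneration argument within the family of Kalman strata.
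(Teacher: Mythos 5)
Your proposal takes a genuinely different route from the paper. The paper's proof is short: it applies a Gelfand--Kapranov--Zelevinsky-type criterion (\cite[Chapter 1, Corollary 5.10]{GKZ}) for when the dual of a product variety is a hypersurface, together with \cite[Corollary 4.8]{Sod} asserting that each $[v_{n_j,\omega_j}(Q_j)]^\vee$ is a hypersurface, reducing the claim to the obvious inequality $n_j - 2 \le \sum_{i}(n_i-2)$. Your route instead identifies $[v_{\nn,\bomega}(Q)]^\vee$ with the totally isotropic Kalman variety $\kappa^{iso}_{\nn,\bomega}$ via the affine tangent space computation, and then attempts to show that $\kappa^{iso}_{\nn,\bomega}$ has codimension exactly $1$. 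Note that this reverses the paper's logical flow: in the paper, Lemma \ref{lem: when Segre-Veronese embedding isotropic quadrics is hypersurface} is an input to Theorem \ref{thm: codim degree Kalman partially symmetric isotropic}, whereas you propose to prove the identification first and use it to get the hypersurface statement. The identification step itself is correct and matches the first half of the paper's proof of Theorem \ref{thm: codim degree Kalman partially symmetric isotropic}; the computation that the affine tangent space to $v_{\nn,\bomega}(Q)$ at a point of $Q$ equals $W_1 + \cdots + W_k$ and the passage to $T\in(W_1+\cdots+W_k)^\perp$ via Lemma \ref{lem: characterize singular vector tuples} are both right.

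However, there is a concrete gap exactly where you flag one. To conclude $\dim\kappa^{iso}_{\nn,\bomega} = D - 1$, you need the projection $\alpha_Q\colon\Sigma_X(Q)\to\PP(S^{\bomega}V)$ to be generically finite, and you only sketch a plan for this ("a rank analysis... should show that this kernel vanishes for generic $T^*$"), which is not carried out. The transitivity-of-$\mathrm{SO}$ argument used in Proposition \ref{prop: DimensionsKalmanstrata} does not transfer directly, because the count of singular vector $k$-tuples of a general tensor lying on $Q$, as opposed to in $\PP\setminus Q$, is not covered by the Friedland--Ottaviani finiteness statement. The proposed fallback by "degeneration within the family of Kalman strata" is also not a proof, since you would have to control how the fibers of $\alpha_Q$ specialize and there is no upper semicontinuity argument spelled out. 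Unless this finiteness step is filled in, your argument proves only $\dim\kappa^{iso}_{\nn,\bomega}\le D-1$ together with the set-theoretic identification, which is not enough to conclude that $[v_{\nn,\bomega}(Q)]^\vee$ is a hypersurface. The paper's GKZ-based argument sidesteps the finiteness issue entirely, which is exactly why it is the cleaner route here.
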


\begin{proof}
Considering a slight modification of \cite[Chapter 1, Corollary 5.10]{GKZ}, the variety $[v_{\nn,\bomega}(Q)]^\vee$ is a hypersurface if and only if
\begin{align}\label{eq: system inequalities GKZ}
\dim(v_{n_j,\omega_j}(Q_j))+\mathrm{codim}([v_{n_j,\omega_j}(Q_j)]^\vee)-1 \le \dim(v_{\nn,\bomega}(Q))\quad\forall j\in[k]\,,
\end{align}
where each variety $v_{n_j,\omega_j}(Q_j)$, which is the degree-$\omega_j$ Veronese embedding of $Q_j$. The variety $[v_{n_j,\omega_j}(Q_j)]^\vee$ is a hypersurface for all $\omega_j\ge 1$ by \cite[Corollary 4.8]{Sod}. Therefore, for all $j\in[k]$ the corresponding inequality in \eqref{eq: system inequalities GKZ} becomes $n_j-2\le\dim(v_{\nn,\bomega}(Q))=n_1+\cdots+n_k-2k$, which is clearly satisfied.
\end{proof}

\begin{Theorem}\label{thm: codim degree Kalman partially symmetric isotropic}
Assume $n_j\ge 3$ for all $j\in[k]$. Consider the product $Q=\prod_{i=1}^k Q_i$ of isotropic quadrics $Q_i\subset\PP(V_i)$. Let $\kappa^{iso}_{\nn, \bomega}$ be the totally isotropic Kalman variety, i.e., the variety of partially symmetric tensors having a singular vector $k$-tuple in $Q$. Let $v_{\nn,\bomega}\colon\PP\to\PP(S^\bomega V)$ be the degree-$\bomega$ Segre-Veronese embedding of $\PP=\prod_{i=1}^k\PP(V_i)$. Then
\begin{equation}\label{eq: Kalman dual Segre-Veronese}
\kappa^{iso}_{\nn, \bomega}=[v_{\nn,\bomega}(Q)]^\vee\,.
\end{equation}
In particular, $\kappa^{iso}_{\nn, \bomega}$ is an irreducible hypersurface of $\PP(S^\bomega V)$. Its degree is equal to the degree of $[v_{\nn,\bomega}(Q)]^\vee$, that is
\begin{equation}\label{eq: degree Kalman partially symmetric isotropic}
2^k\sum_{j=0}^N(-1)^j(N+1-j)!\sum_{|\alpha|=j}\left[\prod_{l=1}^k\frac{\omega_l^{n_l-2-\alpha_l}}{(n_l-2-\alpha_l)!}\sum_{\substack{\beta_l=0\\l\in[k]}}^{\alpha_l}\prod_{l=1}^k\binom{n_l}{\beta_l}(-2)^{\alpha_l-\beta_l}\right]
\end{equation}
where $N=\dim(v_{\nn,\bomega}(Q))=n_1+\cdots+n_k-2k$.

If $J\coloneqq\{j\in[k]\mid n_j=2\}\neq\emptyset$, then identity \eqref{eq: Kalman dual Segre-Veronese} still holds. In such a case, $\kappa^{iso}_{\nn,\bomega}(Q)$ has $2^{|J|}$ irreducible components, where each one of them is isomorphic to the Kalman variety of the irreducible product $\prod_{j\notin J}Q_j$.
\end{Theorem}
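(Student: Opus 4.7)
The plan is to proceed in four steps, with the first being the geometric heart of the argument.

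\textbf{Step 1: Identifying $\kappa^{iso}_{\nn,\bomega}$ with the dual of a Segre--Veronese variety.} The key computation is the embedded tangent space of $v_{\nn,\bomega}(Q)$ at a smooth point. Fix an isotropic tuple $([\xx_1],\ldots,[\xx_k]) \in Q$, so $\xx_i \in \langle\xx_i\rangle^\perp$ for every $i$. Since each $Q_i$ is smooth and $v_{\nn,\bomega}$ is an embedding, the image $v_{\nn,\bomega}(\xx) = \xx_1^{\omega_1}\otimes\cdots\otimes\xx_k^{\omega_k}$ is a smooth point of $v_{\nn,\bomega}(Q)$. Differentiating the parametrization along the tangent directions $\vv_i \in T_{[\xx_i]}Q_i = \langle\xx_i\rangle^\perp$ yields the affine embedded tangent space
\[
\sum_{i=1}^k \xx_1^{\omega_1}\otimes\cdots\otimes\xx_i^{\omega_i-1}\langle\xx_i\rangle^\perp\otimes\cdots\otimes\xx_k^{\omega_k} \;=\; W_1+\cdots+W_k,
\]
in the notation of Lemma \ref{lem: characterize singular vector tuples}. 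The isotropic hypothesis forces $v_{\nn,\bomega}(\xx)$ itself to lie in each $W_i$, so this sum is indeed the full embedded tangent space (not merely the tangent cone). Using the Frobenius form $q_F$ to identify $\PP(S^{\bomega}V)$ with its own dual projective space, the condition $T \in [v_{\nn,\bomega}(Q)]^\vee$ translates into the requirement that $T$ be Frobenius-orthogonal to $W_1+\cdots+W_k$ for some isotropic tuple. Lemma \ref{lem: characterize singular vector tuples} then yields \eqref{eq: Kalman dual Segre-Veronese}.

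\textbf{Step 2: hypersurface and irreducibility.} The hypersurface assertion is immediate from Lemma \ref{lem: when Segre-Veronese embedding isotropic quadrics is hypersurface}. When $n_j \geq 3$ for all $j$, each $Q_j$ is a smooth irreducible quadric of positive dimension, hence $Q$ and $v_{\nn,\bomega}(Q)$ are irreducible; the conormal variety of a smooth irreducible projective variety is irreducible, and so is its image $[v_{\nn,\bomega}(Q)]^\vee$ under the second projection.

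\textbf{Step 3: degree formula.} Apply a standard formula for the degree of the dual of a smooth embedded projective variety whose dual is a hypersurface, expressed in terms of Chern classes of the (co)tangent bundle and the polarization. Here the polarization class is $H = \sum_l \omega_l h_l$ with $h_l = \pi_l^* c_1(\mathcal{O}_{\PP(V_l)}(1))$, and the tangent bundle splits as $TQ = \bigoplus_l \pi_l^* TQ_l$. The normal bundle sequence $0 \to TQ_l \to T\PP(V_l)|_{Q_l} \to \mathcal{O}_{Q_l}(2) \to 0$ produces $c(TQ_l) = (1+h_l)^{n_l}/(1+2h_l)$; expanding $(1+2h_l)^{-1}$ as a formal power series reproduces the inner sum $\sum_{\beta_l=0}^{\alpha_l}\binom{n_l}{\beta_l}(-2)^{\alpha_l-\beta_l}$. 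Expanding $H^{N-j}$ via the multinomial theorem, only exponents with $\gamma_l = n_l-2-\alpha_l$ survive after pairing with the Chern monomials (so that each $h_l$-power matches $\dim Q_l$), giving the factors $\omega_l^{n_l-2-\alpha_l}/(n_l-2-\alpha_l)!$; the surviving multinomial factor $(N-j)!$ combines with the coefficient $N+1-j$ from the degree-of-dual formula to yield $(N+1-j)!$. The integration $\int_Q \prod_l h_l^{n_l-2} = \prod_l \deg Q_l = 2^k$ provides the global factor, and reassembling everything gives \eqref{eq: degree Kalman partially symmetric isotropic}.

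\textbf{Step 4: reducible case.} When $J = \{j : n_j = 2\}$ is nonempty, each $Q_j$ for $j \in J$ consists of two distinct reduced points $\{p_j^+, p_j^-\}$, so $Q$ decomposes as a disjoint union of $2^{|J|}$ smooth irreducible components $Q_\epsilon$ indexed by $\epsilon \in \{+,-\}^J$, each isomorphic to $\prod_{l\notin J}Q_l$. Step~1 applies independently to each $v_{\nn,\bomega}(Q_\epsilon)$, and since the dual of a disjoint union of smooth varieties is the union of the duals, $\kappa^{iso}_{\nn,\bomega}$ has exactly $2^{|J|}$ irreducible components, each equal to the Kalman variety of the irreducible product $\prod_{j \notin J} Q_j$ (with the $J$-factors pinned to a chosen tuple of isotropic points).

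The principal obstacle is Step~3: the Chern-class bookkeeping that unpacks the standard degree-of-dual-variety formula into the precise combinatorial expression \eqref{eq: degree Kalman partially symmetric isotropic}. The remaining steps rest conceptually on Lemma \ref{lem: characterize singular vector tuples} and Lemma \ref{lem: when Segre-Veronese embedding isotropic quadrics is hypersurface}.
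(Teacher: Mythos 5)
Your proof is correct and follows essentially the same approach as the paper: identify the embedded affine tangent space of $v_{\nn,\bomega}(Q)$ at an isotropic tuple with $W_1+\cdots+W_k$, use Frobenius duality via Lemma \ref{lem: characterize singular vector tuples} to translate containment of the tangent space in $\widehat{H}_T$ into the singular-vector condition, and compute the degree via the polar class formula $\delta_0(v_{\nn,\bomega}(Q))=\sum_j(-1)^j(N+1-j)c_j\cdot h^{N-j}$ unpacked through $c(TQ_l)=(1+h_l)^{n_l}/(1+2h_l)$. Your Step 1 is slightly more streamlined since Lemma \ref{lem: characterize singular vector tuples} gives both inclusions at once, whereas the paper proves only $[v_{\nn,\bomega}(Q)]^\vee\subset\kappa^{iso}_{\nn,\bomega}$ and then argues equality from both sides being irreducible hypersurfaces; the paper also delegates the Chern-class bookkeeping of your Step 3 to a cited reference, and uses biduality to verify (what you take as clear) that the $2^{|J|}$ components in Step 4 are pairwise distinct.
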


\begin{proof}
A tensor $T\in\PP(S^\bomega V)$ is a hyperplane $H_T\subset \PP(S^\bomega V)^{*}$.
Recall that we identify all the vector spaces with their duals using the Frobenius inner product. 

Suppose that, for some $k$-tuple $([\xx_1],\ldots,[\xx_k])\in Q$,
the affine tangent space 
$\widehat{T}_{[\xx_1^{\omega_1}\otimes\cdots\otimes\xx_k^{\omega_k}]}v_{\nn,\bomega}(Q)$ at $[\xx_1^{\omega_1}\otimes\cdots\otimes\xx_k^{\omega_k}]$ to 
$v_{\nn,\bomega}(Q)$ is a subspace of the affine cone 
$\widehat{H}_T$ of $H_T$. 
Any vector $\vv$ of this affine tangent space may be written as
\[
\vv=\sum_{i=1}^k \xx_1^{\omega_1}\otimes\cdots\otimes\xx_i^{\omega_i-1}\cdot\yy_i\otimes\cdots\otimes\xx_k^{\omega_k}\,,\quad \yy_i\in \widehat{T}_{[\xx_i]}Q_i\quad\forall\,i\in[k]\,.
\]
By linearity, we may suppose that $\vv=\xx_1^{\omega_1}\otimes\cdots\otimes\xx_i^{\omega_i-1}\cdot\yy_i\otimes\cdots\otimes\xx_k^{\omega_k}$ for some $i\in[k]$. Then $\vv\in \widehat{H}_T$ for every choice of $\yy_i\in \widehat{T}_{[\xx_i]}Q_i$. 
This implies that the contraction $T(\xx_1^{\omega_1}\otimes\cdots\otimes\xx_i^{\omega_i-1}\cdot\yy_i\otimes\cdots\otimes\xx_k^{\omega_k})$ vanishes for every $\yy_i\in \widehat{T}_{[\xx_i]}Q_i$. Equivalently, the vector ${\bf w}_i\coloneqq T(\xx_1^{\omega_1}\otimes\cdots\otimes\xx_i^{\omega_i-1}\cdot\_\otimes\cdots\otimes\xx_k^{\omega_k})\in V_i$ is contained in the affine conormal space $\widehat{N}_{[\xx_i]}Q_i = \left(\widehat{T}_{[\xx_i]}Q_i\right)^\perp$. Observing that
\[
\widehat{N}_{[\xx_i]}Q_i = \left\{\yy_i\in V_i\mid\mathrm{rank}
\begin{pmatrix}
\nabla q_i(\xx_i)\\\yy_i
\end{pmatrix}
\le 1
\right\}
=
\left\{\yy_i\in V_i\mid\mathrm{rank}
\begin{pmatrix}
\xx_i\\\yy_i
\end{pmatrix}
\le 1
\right\}
=\langle\xx_i\rangle\,,
\]
we conclude that the vector ${\bf w}_i$ defined above is proportional to the vector $\xx_i$. Applying this argument for each $i\in [k]$, we find that $([\xx_1],\ldots,[\xx_k])$ is a singular vector $k$-tuple for the tensor $T$. This means that $T\in \kappa^{iso}_{\nn, \bomega}$.
Recall that
\[
[v_{\nn,\bomega}(Q)]^\vee=\overline{\{T\in\PP (S^\bomega V) \mid \mbox{$\widehat{T}_{[\xx]}v_{\nn,\bomega}(Q)\subset H_T$ for some $[\xx]\in v_{\nn,\bomega}(Q)$}\}}\ .
\]
Upon taking closures, we have proven the inclusion $[v_{\nn,\bomega}(Q)]^\vee\subset \kappa^{iso}_{\nn, \bomega}$. The variety $[v_{\nn,\bomega}(Q)]^\vee$ is a hypersurface for every choice of $\nn$ and $\bomega$ by Lemma \ref{lem: when Segre-Veronese embedding isotropic quadrics is hypersurface}. On the other hand, the totally isotropic Kalman variety  $\kappa^{iso}_{\nn, \bomega}$ is an irreducible hypersurface if $n_j\ge 3$ for all $j\in[k]$. Hence equality $\kappa^{iso}_{\nn, \bomega}=[v_{\nn,\bomega}(Q)]^\vee$ follows.

Since $[v_{\nn,\bomega}(Q)]^\vee$ is a hypersurface, its degree is equal to the polar class $\delta_0(v_{\nn,\bomega}(Q))$ by \cite[Theorem 3.4]{Holme}. This invariant can be computed using the Chern classes of $v_{\nn,\bomega}(Q)$ by the relation
\[
\delta_0(v_{\nn,\bomega}(Q))=\sum_{j=0}^{N}(-1)^j(N+1-j)c_j(v_{\nn,\bomega}(Q))\cdot h^{N-j}\,,
\]
where $h=c_1(\mathcal{O}_{v_{\nn,\bomega}(Q)}(1))$ is the hyperplane class.
This computation was done in \cite[Proposition 5.3.1]{sodphd} in a slightly more general setting and leads to the degree \eqref{eq: degree Kalman partially symmetric isotropic}.

Suppose $J\neq \emptyset$. For $1\leq j\leq 2^{|J|}$, define $Y_j$ to be the $j$th irreducible component of $Q$. Notice that each $Y_j$ is isomorphic to a product of quadrics.
We now describe $[v_{\nn,\bomega}(Q)]^\vee$ in terms of the dual varieties $[v_{\nn,\bomega}(Y_j)]^\vee$. 

By definition, $v_{\nn,\bomega}(Q) = \bigcup_{j\in J} v_{\nn,\bomega}(Y_j)$. Consider the conormal 
variety $\mathcal N_{v_{\nn,\bomega}(Q)}$ and the diagram 
\begin{equation}
\begin{tikzcd}
& \mathcal N_{v_{\nn,\bomega}(Q)}  \arrow{dl}[swap]{p_1} \arrow{dr}{p_2} & \\
v_{\nn,\bomega}(Q)& & \PP(S^\bomega V).
\end{tikzcd}
\end{equation}
The preimage under $p_1$ of each irreducible component $v_{\nn,\bomega}(Y_j)$ is a projective bundle and therefore irreducible \cite[Exercise 11.4.C]{Vakil}.
The image of such a component is the irreducible hypersurface $[v_{\nn,\bomega}(Y_j)]^{\vee}$.
This shows that $[v_{\nn,\bomega}(Q)]^\vee = \bigcup_{j\in J} [v_{\nn,\bomega}(Y_j)]^{\vee}$.
To see that these irreducible components are all distinct, assume $[v_{\nn,\bomega}(Y_j)]^{\vee} = [v_{\nn,\bomega}(Y_i)]^{\vee}$ for some $i\neq j\in J$.
Applying the dual construction again and the fact that every projective irreducible variety is reflexive in characteristic zero, we find that $v_{\nn,\bomega}(Y_j) = [v_{\nn,\bomega}(Y_j)]^{\vee\vee} = [v_{\nn,\bomega}(Y_i)]^{\vee\vee} = v_{\nn,\bomega}(Y_i)$, a contradiction, because $Y_i$ and $Y_j$ are distinct.  

Now, the totally isotropic Kalman variety is $\kappa^{iso}_{\nn, \bomega} = \bigcup_{j\in J} \kappa^{iso}_{\nn, \bomega}(Y_j)$, where we do not know a priori that all the irreducible varieties in the union are distinct. 
However, by the first part of this proof, we find that $\kappa^{iso}_{\nn, \bomega}(Y_j) = [v_{\nn,\bomega}(Y_j)]^{\vee}$.
Moreover, as we have checked, all the irreducible components $[v_{\nn,\bomega}(Y_j)]^{\vee}$ are distinct.
Therefore $\kappa^{iso}_{\nn, \bomega} = \bigcup_{j\in J}[v_{\nn,\bomega}(Y_j)]^{\vee} = [v_{\nn,\bomega}(Q)]^\vee$ has $2^{|J|}$ components, where each of them is a dual variety of the corresponding irreducible quadric. 
\end{proof}

\begin{Example}
Let $k=2$ and $\nn = (2,2)$. We assume that $\PP(V_1)=\PP(V_2)\cong\PP_{\mC}^1$ and $Q_1=Q_2=\mathcal{V}(x_1^2+x_2^2)\subset\PP_{\mC}^1$. One has 
\[
\kappa_{\nn}(\PP_{\mC}^1\times Q_2) = \kappa^{iso}_{\nn} = \kappa_{\nn}(Q_1\times\PP_{\mC}^1)\,.
\]
This is a reducible hypersurface of degree $4$ whose irreducible components are four planes in $\PP(\C^2\otimes\C^2)\cong\PP_{\mC}^3$, dual to the four points of $Q_1\times Q_2$ as predicted by Theorem \ref{thm: codim degree Kalman partially symmetric isotropic}. Denoting $a_{ij}$ the homogeneous coordinates of $\PP_{\mC}^3$, it is defined by the vanishing of the polynomial $\left[(a_{11}-a_{22})^2 + (a_{12}+a_{21})^2\right]\left[(a_{11}+a_{22})^2 + (a_{12}-a_{21})^2\right]$.
There are four lines in $\PP_{\mC}^3$ obtained as the intersection of the four pairs of non-conjugate planes. Each of the Kalman strata $\kappa^{\{1\}}_{\nn}(Q_1\times \PP_{\mC}^1)=\kappa^{\{1\}}_{\nn}(\PP_{\mC}^1\times \PP_{\mC}^1)$ and $\kappa^{\{2\}}_{\nn}(\PP_{\mC}^1\times Q_2)=\kappa^{\{2\}}_{\nn}(\PP_{\mC}^1\times \PP_{\mC}^1)$ consists of two such lines. Therefore, both these Kalman strata have degree $2$ and codimension $2$. 
\end{Example}

In the symmetric case, Theorem \ref{thm: codim degree Kalman partially symmetric isotropic} specializes to the following result, which is related to \cite[Proposition 2.10]{BGV}.

\begin{Theorem}\label{thm: codim degree Kalman symmetric isotropic}
Let $\kappa^{iso}_{n, \omega}$ be the Kalman variety of symmetric tensors having an isotropic eigenvector. Let $v_{n,\omega}\colon\PP(V)\to\PP(S^\omega V)$ be the degree-$\omega$ Veronese embedding of $\PP(V)$. Then
\[
\kappa^{iso}_{n, \omega}=[v_{n,\omega}(Q)]^\vee\,.
\]
In particular, $\kappa^{iso}_{n, \omega}$ is an irreducible hypersurface of $\PP(S^\omega V)$. Its degree is
\begin{equation}\label{eq: degree Kalman isotropic}
\deg(\kappa^{iso}_{n, \omega})=2\sum_{j=0}^{n-2}(j+1)(\omega-1)^j\,.
\end{equation}
\end{Theorem}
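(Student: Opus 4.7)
The result is the specialization of Theorem \ref{thm: codim degree Kalman partially symmetric isotropic} to the symmetric case $k=1$, $\nn=(n)$, $\bomega=(\omega)$. My plan is therefore to invoke that theorem, working under the (implicit) hypothesis $n \geq 3$ so that the exceptional clause with $J \neq \emptyset$ does not apply and the single isotropic quadric $Q \subset \PP(V)$ is irreducible. Both the identity $\kappa^{iso}_{n,\omega} = [v_{n,\omega}(Q)]^\vee$ and the irreducibility of $\kappa^{iso}_{n,\omega}$ as a hypersurface in $\PP(S^\omega V)$ follow immediately from that statement.

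The remaining work is to simplify the closed-form degree \eqref{eq: degree Kalman partially symmetric isotropic} to the expression \eqref{eq: degree Kalman isotropic}. Specializing to $k=1$ (so $N = n-2$ and the multi-index $\alpha$ collapses to a single integer $j \in \{0,\dots,n-2\}$), the general formula reduces to the Katz--Holme polar-class identity
\[
\deg(\kappa^{iso}_{n,\omega}) \;=\; 2\sum_{j=0}^{n-2}(-1)^j(n-1-j)\,c_j\,\omega^{n-2-j},
\]
where the factor $2$ is $\int_Q H^{n-2} = \deg(Q)$ and $c_j$ denotes the coefficient of $H^j$ in the Chern polynomial $c(T_Q) = (1+H)^n/(1+2H)$ coming from the Euler/adjunction sequence $0 \to T_Q \to T_{\PP(V)}|_Q \to \mathcal{O}_Q(2) \to 0$, so that explicitly $c_j = \sum_{\beta=0}^{j}\binom{n}{\beta}(-2)^{j-\beta}$.

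The final step, and the main obstacle, is the polynomial identity in $\omega$
\[
\sum_{j=0}^{n-2}(-1)^j(n-1-j)\,c_j\,\omega^{n-2-j} \;=\; \sum_{j=0}^{n-2}(j+1)(\omega-1)^j.
\]
I would establish it by expanding both sides in the basis $\{(\omega-1)^m\}_{m=0}^{n-2}$ (equivalently, by matching coefficients of powers of $\omega$), reducing to a combinatorial identity on partial alternating sums. This identity in turn follows either from the generating-function relation $\sum_{j\geq 0}c_j z^j = (1+z)^n/(1+2z)$ together with a careful analysis of the truncation at $j=n-2$, or by induction on $n$ using the recursion $(1+z)\,c^{(n-1)}(z) = c^{(n)}(z)$. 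The resulting closed form agrees with the product $d(n,1,\omega)\deg(Q)$ formally obtained from Remark \ref{rmk: degree Kalman symmetric case} at $\delta=1$, $\deg(Z)=2$, which provides an independent sanity check on the final formula.
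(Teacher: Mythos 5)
Your approach matches the paper's exactly: Theorem~\ref{thm: codim degree Kalman symmetric isotropic} is presented there as the specialization of Theorem~\ref{thm: codim degree Kalman partially symmetric isotropic} to $k=1$, and the paper offers no further argument beyond the word ``specializes.'' Your observation that the hypothesis $n\ge 3$ is needed for the irreducibility claim is a correct and useful catch: it is stated in the partially symmetric theorem but silently dropped from the symmetric statement, and for $n=2$ the variety $[v_{2,\omega}(Q)]^\vee$ degenerates to a pair of hyperplanes, though the degree formula~\eqref{eq: degree Kalman isotropic} still returns the right number $2$.

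The one genuine gap in your write-up is the final polynomial identity
\[
\sum_{j=0}^{n-2}(-1)^j(n-1-j)\,c_j\,\omega^{n-2-j}\;=\;\sum_{m=0}^{n-2}(m+1)(\omega-1)^m\,,\qquad c_j=\sum_{\beta=0}^{j}\binom{n}{\beta}(-2)^{j-\beta}\,,
\]
which you assert and sketch two strategies for, but never actually prove. This is the only nontrivial step in passing from~\eqref{eq: degree Kalman partially symmetric isotropic} at $k=1$ to~\eqref{eq: degree Kalman isotropic}, so it should be carried through. The identity does hold: substituting $\omega=1+u$ and extracting the coefficient of $u^m$ reduces it to $\sum_{j=0}^{n-2-m}(-1)^j(n-1-j)\binom{n-2-j}{m}c_j=m+1$, which one can establish from the generating function $\sum_{j\geq 0}c_j z^j=(1+z)^n/(1+2z)$ together with the weighted version $\sum_{j\geq0}(n-1-j)(-1)^j c_j z^j=(1-z)^{n-1}\bigl[(n-1)-(2n-1)z\bigr]/(1-2z)^2$, keeping careful track of the truncation at $j=n-2$. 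To be fair, the paper is equally terse here, delegating the closed-form degree to~\cite[Proposition~5.3.1]{sodphd}. Your closing remark that the answer formally agrees with $d(n,1,\omega)\cdot\deg(Q)$ from Remark~\ref{rmk: degree Kalman symmetric case} is a nice sanity check, and it is more than a coincidence: for $k=1$, formula~\eqref{eq: dim fiber q r isotropic factors} gives the same fiber codimension over isotropic and non-isotropic points, so the intersection-theoretic degree argument of Theorem~\ref{thm: OSh, partially symmetric case} extends to $Z=Q$ in the symmetric case even though its stated hypotheses exclude it.
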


\begin{Example}
Consider the totally isotropic Kalman variety $\kappa^{iso}_{3,2}$ of ternary quadrics possessing an isotropic eigenvector. By Theorem \ref{thm: codim degree Kalman symmetric isotropic}, this is an irreducible hypersurface of degree $6$ in $\PP(S^\omega V)\cong\PP^5$. We include a \texttt{Macaulay2} code to verify this, using both descriptions according to Theorem \ref{thm: codim degree Kalman symmetric isotropic}. Below we denote the homogeneous coordinates of $\PP(S^\omega V)$ by $a_0,\ldots,a_5$.

As Kalman variety:
{\small
\begin{Verbatim}[commandchars=\\\{\}]
R = \textcolor{purple}{QQ}[x_1..x_3,a_0..a_5,c_0..c_5];
MX = \textcolor{ForestGreen}{matrix}\{\{x_1^2,2*x_1*x_2,2*x_1*x_3,x_2^2,2*x_2*x_3,x_3^2\}\};
aa = \textcolor{ForestGreen}{matrix}\{\{a_0..a_5\}\}; xx = \textcolor{ForestGreen}{matrix}\{\{x_1..x_3\}\};
f = (MX*\textcolor{ForestGreen}{transpose}(aa))_(0,0);
Ivect = \textcolor{ForestGreen}{minors}(2,\textcolor{ForestGreen}{diff}(xx,f)||xx);
IQ = \textcolor{ForestGreen}{ideal}(\textcolor{ForestGreen}{sum}(3, i-> x_(i+1)^2));
sat = \textcolor{ForestGreen}{saturate}(Ivect+IQ,\textcolor{ForestGreen}{ideal} xx);
Kalman = \textcolor{ForestGreen}{eliminate}(\textcolor{ForestGreen}{first entries} xx,sat);
\end{Verbatim}
}
As dual variety $[v_{3,2}(Q)]^\vee$:
{\small
\begin{Verbatim}[commandchars=\\\{\}]
cc = \textcolor{ForestGreen}{matrix}\{\{c_0..c_5\}\};
IVQ = \textcolor{ForestGreen}{eliminate}(\textcolor{ForestGreen}{first entries} xx,
      \textcolor{ForestGreen}{saturate}(\textcolor{ForestGreen}{ideal}(\textcolor{ForestGreen}{first entries}(MX-aa))+IQ,\textcolor{ForestGreen}{ideal} xx));
jacIVQ = \textcolor{ForestGreen}{diff}(aa, \textcolor{ForestGreen}{transpose gens} IVQ);
norIVQ = \textcolor{ForestGreen}{saturate}(IVQ + \textcolor{ForestGreen}{minors}(\textcolor{ForestGreen}{codim}(IVQ)+1,jacIVQ||cc), \textcolor{ForestGreen}{ideal} aa); 
dualIVQ = \textcolor{ForestGreen}{eliminate}(\textcolor{ForestGreen}{first entries} aa, norIVQ);
eq = \textcolor{ForestGreen}{sub}(dualIVQ_0, \textcolor{ForestGreen}{apply}(6, i-> c_i=>a_i));
eq == Kalman_0 \textcolor{orange}{-- the two equations coincide}
\end{Verbatim}
}
We point that the defining polynomial \verb+eq+ of $[v_{n,\omega}(Q)]^\vee$  has a role in the theory of characteristic polynomials of symmetric tensors. The second author showed that the leading coefficient of the characteristic polynomial of a symmetric tensor in $S^\omega V$ is equal, up to scaling, to the defining polynomial of $[v_{n,\omega}(Q)]^\vee$ with multiplicity $\omega-2$ \cite{Sod}. 
\end{Example}

\section{Generating functions}\label{sec: degreesofKalmans}

\epigraph{{\it A generating function is a device somewhat similar to a bag.\\Instead of carrying many little objects detachedly, which could be embarrassing, we put them all in a bag, and then we have only one object to carry, the bag}.}{George P\'{o}lya \cite[Chapter VI]{GP}}

In the previous section, we determined the degrees of the generalized Kalman varieties $\kappa_{\nn, \bomega}(Z)$ by computing the coefficients $d(\nn,\bdelta,\bomega)$. In the spirit of P\'{o}lya's quote, Theorem \ref{thm: generating function, partially symmetric case} furnishes a generating function for the coefficients $d(\nn,\bdelta,\bomega)$ when $\bdelta=(\delta,0,\ldots,0)$, or equivalently when we consider only one subvariety $Z_1\subset\PP(\C^{n_1})$ of codimension $\delta$. For this particular choice of $\bdelta$, we use the notation $d(\nn,\delta,\bomega)$ in place of $d(\nn,\bdelta,\bomega)$.
Before proceeding to the proof of Theorem \ref{thm: generating function, partially symmetric case}, we start with a lemma.

\begin{Lemma}\label{lem: summation}
Let $d(\mm)=\sum_{j_1=0}^{m_1-1}\cdots\sum_{j_k=0}^{m_k-1}f(j_1,\ldots, j_k)$. Then
\[
\sum_{\mm\in\mathbb{N}^k} d(\mm)\,\xx^\mm\,=\left(\prod_{i=1}^k\frac{x_i}{1-x_i}\right)\sum_{{\bf j}\in\mathbb{N}^k}f({\bf j})\,\xx^{\bf j}\,.
\]
\end{Lemma}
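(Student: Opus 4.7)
The plan is to prove the identity by interchanging the order of summation, a standard manipulation for such generating functions. There are no real obstacles here; the result is essentially formal once one is careful with index bookkeeping, and convergence can be treated in the sense of formal power series in $\xx = (x_1,\ldots,x_k)$.

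First I would substitute the definition of $d(\mm)$ into the left-hand side, producing the double sum
\[
\sum_{\mm\in\mathbb{N}^k} d(\mm)\,\xx^\mm = \sum_{\mm\in\mathbb{N}^k}\sum_{\substack{0\le j_i\le m_i-1\\ i\in[k]}} f(j_1,\ldots,j_k)\,\xx^\mm.
\]
The inequalities $0\le j_i\le m_i-1$ are equivalent to $j_i\ge 0$ and $m_i\ge j_i+1$, so each pair $(\mm,\jj)$ appears exactly once under either order of summation.

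Next, I would swap the two summations, taking $\jj$ as the outer index. For each fixed $\jj\in\mathbb{N}^k$ the inner sum is over tuples $\mm$ with $m_i\ge j_i+1$, and it factors as a product of geometric series:
\[
\sum_{\mm\ge\jj+{\bf 1}}\xx^\mm = \prod_{i=1}^k\sum_{m_i=j_i+1}^\infty x_i^{m_i} = \prod_{i=1}^k\frac{x_i^{j_i+1}}{1-x_i}.
\]
Pulling the $x_i$-independent factor $\prod_i \frac{x_i}{1-x_i}$ outside and combining the remaining $x_i^{j_i}$ into $\xx^\jj$, one obtains
\[
\sum_{\mm\in\mathbb{N}^k} d(\mm)\,\xx^\mm = \prod_{i=1}^k\frac{x_i}{1-x_i}\sum_{\jj\ge {\bf 0}} f(\jj)\,\xx^\jj,
\]
which is exactly the claimed identity.

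The only subtlety worth mentioning is that both the interchange of summation and the use of the geometric series formula are legitimate at the level of formal power series in $\xx$, since for every fixed monomial $\xx^\mm$ only finitely many $\jj$ contribute to the corresponding coefficient. Hence no analytic convergence hypothesis on $f$ is needed.
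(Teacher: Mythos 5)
Your proof is correct. The paper proves the lemma by induction on $k$: the base case $k=1$ is verified by expanding the left-hand side as a Cauchy product $(x+x^2+\cdots)\bigl(f(0)+f(1)x+\cdots\bigr)$, and the induction step is stated to be similar. You instead give a direct, one-shot multivariate argument: substitute the definition of $d(\mm)$, interchange the order of summation over $\mm$ and $\jj$, and for each fixed $\jj$ evaluate $\sum_{\mm\ge\jj+{\bf 1}}\xx^\mm$ as a product of geometric tails $\prod_i x_i^{j_i+1}/(1-x_i)$. This avoids the induction entirely and makes the multivariate structure transparent; the paper's inductive route defers the bookkeeping to the single-variable convolution identity and then iterates. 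Your closing remark that the interchange is justified coefficientwise at the level of formal power series is a point the paper leaves implicit, and it is worth having on record.
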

\begin{proof} 
The proof is an induction on $k$. For $k=1$, we have 
\begin{align*}
\sum_{m=0}^\infty \left(\sum_{j=0}^{m-1}f(j)\right)x^{m} &= f(0)x+\left(f(0)+f(1)\right)x^2+\left(f(0)+f(1)+f(2)\right)x^3+\cdots\\
&= (x+x^2+x^3+\cdots)\left(f(0)+f(1)x+f(2)x^2+\cdots\right)\\ &= \frac{x}{1-x}\sum_{j=0}^\infty f(j)x^j\,.
\end{align*}
The induction step is similar. 
\end{proof}

As in Zeilberger's approach \cite{EZ}, we shall employ a classical and powerful theorem of MacMahon \cite[\S 3, Chapter 2, 66]{MacMahon}. 

\begin{Theorem}[MacMahon Master Theorem]\label{thm: MMT}
Let $A=(a_{ij})$ be an $m\times m$ complex matrix, and let $\zz=(z_1,\ldots,z_m)$ be a vector of formal variables. Let $f({\bf p})$ be the coefficient of the monomial $\zz^{\bf p}$ in the product $\prod _{i=1}^{m}(a_{i1}z_{1}+\cdots+a_{im}z_{m})^{p_{i}}$.
Let $\mathbf{w}=(w_{1},\ldots,w_{m})$ be another vector of formal variables, $T=\mathrm{diag}(\bf{w})$ and denote by $I_m$ the identity matrix of size $m$. Then
\[
\sum_{{\bf p}\in\mathbb{N}^m} f({\bf p})\,{\bf w}^{\bf p}\,=\,{\frac {1}{\det(I_{m}-TA)}}\,.
\]
\end{Theorem}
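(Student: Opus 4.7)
The plan is to prove the MacMahon Master Theorem by writing $f({\bf p})$ as a multivariate constant-term extraction, interchanging it with the summation over ${\bf p}$, summing the resulting geometric series in closed form, and evaluating the remaining multidimensional residue via a linear change of variables.

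First, I rewrite the coefficient extraction defining $f({\bf p})$ as a constant term in the auxiliary variables $\zz=(z_1,\ldots,z_m)$:
\[
f({\bf p})=[\zz^{\bf p}]\prod_{i=1}^{m}(A\zz)_i^{p_i}=\mathrm{CT}_{\zz}\,\zz^{-{\bf p}}\prod_{i=1}^{m}(A\zz)_i^{p_i}\,,
\]
where $(A\zz)_i\coloneqq a_{i1}z_1+\cdots+a_{im}z_m$ and $\mathrm{CT}_{\zz}$ extracts the coefficient of $\zz^{{\bf 0}}$ in the Laurent expansion around $\zz={\bf 0}$. Substituting this into $\sum_{\bf p}f({\bf p})\ww^{\bf p}$ and interchanging the summation with $\mathrm{CT}_{\zz}$ (which is valid coefficient-by-coefficient in $\ww$), the inner sum factorizes into $m$ geometric series:
\[
\sum_{{\bf p}\in\NN^m}f({\bf p})\ww^{{\bf p}}=\mathrm{CT}_{\zz}\prod_{i=1}^m\frac{1}{1-w_i(A\zz)_i/z_i}=\mathrm{CT}_{\zz}\prod_{i=1}^m\frac{z_i}{z_i-w_i(A\zz)_i}\,.
\]

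Next, I reinterpret the constant term as an iterated residue at $\zz={\bf 0}$. Setting $F_i(\zz)\coloneqq z_i-w_i(A\zz)_i$, which is the $i$-th coordinate of the linear map $F(\zz)=(I_m-TA)\zz$, we have
\[
\mathrm{CT}_{\zz}\prod_{i=1}^m\frac{z_i}{F_i(\zz)}=\mathrm{Res}_{\zz={\bf 0}}\frac{dz_1\wedge\cdots\wedge dz_m}{F_1(\zz)\cdots F_m(\zz)}\,.
\]
Since $\det(I_m-TA)$ has constant term $1$ as a formal power series in $\ww$, the matrix $I_m-TA$ is invertible over $\CC[[\ww]]$, so the substitution $\uu=F(\zz)$ is a legitimate formal linear change of coordinates at $\zz={\bf 0}$, with constant Jacobian $\det(I_m-TA)$. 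The residue transforms into
\[
\mathrm{Res}_{\uu={\bf 0}}\frac{(\det(I_m-TA))^{-1}\,du_1\wedge\cdots\wedge du_m}{u_1\cdots u_m}=\frac{1}{\det(I_m-TA)}\,,
\]
which is the claimed identity.

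The main obstacle is the rigorous justification of these formal manipulations, in particular the interchange of the infinite sum with $\mathrm{CT}_{\zz}$ and the change of variables in a multivariate residue. This is resolved by working entirely at the level of formal power series in $\ww$: each coefficient of $\ww^{\bf p}$ is a finite polynomial expression in the entries $a_{ij}$, and the identity reduces to verifying finitely many equal-degree polynomial identities at each order, which the linear nature of $F$ makes immediate. Equivalently, by restricting $\ww$ to a sufficiently small polydisc around ${\bf 0}$, the matrix $I_m-TA$ becomes a genuinely invertible complex matrix and all series converge absolutely, so the manipulations are classically justified.
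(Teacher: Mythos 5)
The paper does not prove this statement at all: it is quoted as a classical result of MacMahon with a reference to \cite{MacMahon}, so there is no in-paper argument to compare against. Your proposal supplies an actual proof, and it is correct; it is essentially the classical contour-integral/residue proof in the spirit of I.~J.~Good. The three steps --- writing $f(\mathbf{p})$ as a constant term, summing the geometric series to get $\mathrm{CT}_{\zz}\prod_i z_i/(z_i-w_i(A\zz)_i)$, and evaluating the resulting Grothendieck residue of $d\zz/\prod_i F_i(\zz)$ at the unique common zero of the linear forms $F_i=((I_m-TA)\zz)_i$ as $1/\det(I_m-TA)$ --- are all sound. The only place deserving a little more care is the change of variables: the purely formal justification (``the linear nature of $F$ makes it immediate'') is asserted rather than carried out, but your analytic fallback does close the gap. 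For $\ww$ in a small polydisc the torus $\{|z_i|=r_i\}$ avoids all the hyperplanes $\{F_i=0\}$, and the straight-line homotopy $M_t=I_m-tTA$, $t\in[0,1]$, shows the image cycle is homologous to the standard torus around $\uu=\mathbf{0}$ in the complement of the coordinate hyperplanes, so the iterated residue is indeed $1$ and the Jacobian factor $\det(I_m-TA)$ comes out as claimed. One should also be explicit that the Laurent expansion implicit in $\mathrm{CT}_{\zz}$ is the one with each $w_i(A\zz)_i/z_i$ treated as small (equivalently, the one realized by the torus integral with $\ww$ small), since the constant term of a rational function is not expansion-independent; your coefficient-by-coefficient remark in $\ww$ pins this down correctly.
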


\noindent We are now ready to prove Theorem \ref{thm: generating function, partially symmetric case}. 

\begin{proof}[Proof of Theorem \ref{thm: generating function, partially symmetric case}]
Recall that, for any vector of codimensions $\bdelta$, the integer $d(\nn,\bdelta,\bomega)$ is the coefficient of the monomial $h^\delta\prod_{i=1}^kt_{i}^{n_{i}-\delta_i-1}$ in the polynomial
\[
\prod_{i=1}^{k}\frac{(\widehat{t_i}+h)^{n_{i}}-t_i^{n_i}}{(\widehat{t_i}+h)-t_i}=\prod_{i=1}^{k}\sum_{j_i=0}^{n_i-1}(\widehat{t_i}+h)^{j_{i}}t_i^{n_i-1-j_i}\,,
\]
where $\delta=\sum_{i=1}^k\delta_i$ and $\widehat{t_i}\coloneqq\left(\sum_{j=1}^{k}\omega_j t_j\right)- t_i$. Equivalently, $d(\nn,\bdelta,\bomega)$ is the constant term of
\[
\sum_{j_1=0}^{n_1-1}\cdots \sum_{j_k=0}^{n_k-1}h^{-\delta}\prod_{i=1}^{k}(\widehat{t_i}+h)^{j_{i}}t_i^{\delta_i-j_i}\,,
\]
namely the sum of the constant terms in the products $h^{-\delta}\prod_{i=1}^{k}(\widehat{t_i}+h)^{j_{i}}t_i^{\delta_i-j_i}$. Observe that the constant term of each product $h^{-\delta}\prod_{i=1}^{k}(\widehat{t_i}+h)^{j_{i}}t_i^{\delta_i-j_i}$ is the coefficient of $h^\delta\prod_{i=1}^k t_i^{j_i}$ in $\prod_{i=1}^k(\widehat{t_i}+h)^{j_i}t_i^{\delta_i}$, which we may call $f({\bf j},\bdelta)$. Therefore
\[
d(\bdelta,\nn, \bomega)=\sum_{j_1=0}^{n_1-1}\cdots \sum_{j_k=0}^{n_k-1}f({\bf j},\bdelta)\,.
\]
\noindent Applying Lemma \ref{lem: summation} with respect to $\nn$, we have
\[
\sum_{\nn,\bdelta\in\mathbb{N}^k} d(\bdelta,\nn, \bomega)\,\xx^\nn \yy^\bdelta = \prod_{i=1}^k\frac{x_i}{1-x_i}\sum_{{\bf j},\bdelta\in\mathbb{N}^k} f({\bf j},\bdelta)\,\xx^{\bf j}\yy^{\bdelta}. 
\]
From now on, we assume that $\delta_1=\delta$ and $\delta_i=0$ for all $i\ge 2$ and we call $f({\bf j},\delta)$ the coefficient of $h^\delta\prod_{i=1}^k t_i^{j_i}$ in $t_1^\delta\prod_{i=1}^k(\widehat{t_i}+h)^{j_i}$. In this special case, we can apply MacMahon's Theorem \ref{thm: MMT} with $m=k+1$, $f({\bf j},\delta)$ in place of $f({\bf p})$, and considering the product
\[
t_1^{\delta}\prod_{i=1}^k(\widehat{t_i}+h)^{j_i} = t_1^\delta\prod _{i=1}^{k}(a_{i,1}t_{1}+\cdots+a_{i,k}t_{k}+a_{i,k+1}h)^{j_{i}}\,.
\]
Here $A=(a_{ij})$ is the $(k+1)\times (k+1)$ matrix defined as
\[
A\coloneqq
\left(\begin{array}{cccc|c}
&&&&1\\
&&\hspace{-20pt}B&&\vdots\\
&&&&1\\ \hline
1&0&\cdots&0&0
\end{array}\right), 
\]
where $B$ is the $k\times k$ matrix whose $(i,j)$-th entry is $\omega_j-\delta_{ij}$ ($\delta_{ij}=1$ if $i=j$, and $0$ otherwise). Then
\[
\sum_{{\bf j}\in\mathbb{N}^k}\sum_{\delta=0}^\infty f({\bf j},\delta)\,\xx^{\bf j}y^{\delta} = \frac{1}{\det(I_{k+1}-TA)}\,,
\]
where $T=\mathrm{diag}(\xx,y)$. Summing up, we have obtained the formula
\begin{equation}\label{eq: identity generating function MacMahon}
\sum_{\nn\in\mathbb{N}^k}\sum_{\delta=0}^{\infty} d(\nn,\delta,\bomega)\,\xx^\nn y^\delta\,=\,\prod_{i=1}^k\frac{x_i}{1-x_i}\frac{1}{\det(I_{k+1}-TA)}\,.
\end{equation}

\noindent It remains to compute $\det(I_{k+1}-TA)$. Define $M\coloneqq I_{k+1}-TA$. More explicitly 
\[
M =
\left(
\begin{array}{c|cccccc}
1-(\omega_1-1)x_1 & -\omega_2x_1 & -\omega_3x_1 & \cdots & -\omega_{k-1}x_1 & -\omega_k x_1 & -x_1 \\
-\omega_1x_2 & 1-(\omega_2-1)x_2 & -\omega_3x_2 & \cdots  & -\omega_{k-1}x_2& -\omega_k x_2& -x_2 \\
\vdots & \vdots & \vdots & & \vdots & \vdots & \vdots \\
-\omega_1x_k & -\omega_2x_k & -\omega_3x_k & \cdots  & -\omega_{k-1}x_k & 1-(\omega_k-1)x_k & -x_k \\
\hline\\[-10pt]
-y & 0 & 0 & \cdots & 0 & 0 & 1 \\
\end{array}
\right)
\,.
\]
By expanding along the last row, we derive two $k \times k$ submatrices
\begin{align*}
M' & \coloneqq
\begin{pmatrix}
-\omega_2x_1 & -\omega_3x_1 & \cdots & -\omega_kx_1 & -x_1 \\
1-(\omega_2-1)x_2 & -\omega_3x_2 &\cdots & -\omega_kx_2 & -x_2 \\
\vdots & \vdots & & \vdots & \vdots \\
-\omega_2x_k & -\omega_3x_k & \cdots & 1-(\omega_k-1)x_k & -x_k \\
\end{pmatrix}\,,\\
M'' & \coloneqq
\begin{pmatrix}
1-(\omega_1-1)x_1 & -\omega_2x_1 & \cdots & -\omega_kx_1 \\
-\omega_1x_2 & 1-(\omega_2-1)x_2 &\cdots & -\omega_kx_2 \\
\vdots & \vdots & & \vdots \\
-\omega_1x_k & -\omega_2x_k & \cdots & 1-(\omega_k-1)x_k \\
\end{pmatrix}\,.
\end{align*}
The next step is to compute the determinants of $M'$ and $M''$.

\smallskip
\noindent {\bf Claim 1:} We have
\begin{equation}\label{eq: formula det M'}
\det(M')=(-1)^k x_1\prod_{i=2}^k(1+x_i)\,.
\end{equation}
First of all, the polynomial $\det(M')$ has degree $\le k$ in $x_1,\ldots,x_k$. The first row of $M'$ is a multiple of $x_1$, hence $x_1$ divides $\det(M')$. Note that, if $x_i=-1$ then the first and the $i$-th row of $M'$ are proportional and so $\deg(M')$ vanishes. By Euclidean division, we may write $\det(M')=(1+x_i)q + R$, where $R$ is the remainder, which then depends only on the variables $x_j$ with $j \neq i$. Hence $R(x_1, \ldots, \widehat{x_i}, \ldots x_k)=0$ for every value of the variables. Since we are in characteristic zero, $R=0$. The left-hand side is divisible by $x_1\prod_{i \geq 2}(1+ x_i)$ of degree $k$, and hence there is a scalar $\gamma$ such that $\det(M')= \gamma\,x_1\prod_{i \geq 2}(1+ x_i)$. In order to determine that $\gamma=(-1)^k$, we substitute $x_1=1$ and $x_2=\cdots=x_k=0$ in $M'$ and observe that the determinant of the resulting matrix is $(-1)^k$.

\smallskip
\noindent {\bf Claim 2:} We have
\begin{equation}\label{eq: formula det M''}
\det(M'')= \prod_{i=1}^k(1+x_i)-\sum_{j=1}^k \omega_jx_j\prod_{i \neq j}(1+x_i)\,.
\end{equation}
Write $M'' = D - \xx^\mathsmaller{T}\bomega$, where $\xx=(x_1,\ldots,x_k)$ and $\bomega=(\omega_1,\ldots,\omega_k)$ are row vectors and $D=\mathrm{diag}(\bf{1}+\xx)$ is the diagonal matrix whose $i$-th diagonal entry is $1+x_i$. 
Since $D$ is invertible over the fraction field $\C(\xx)$, we have $M'' = D[I_k - (D^{-1}\xx^\mathsmaller{T})\,\bomega]$.
By the matrix determinant lemma \cite[\S 4.1, Fact 22]{Hogben} over the fraction field $\C(\xx)$, one has
\[
\det(I_k - (D^{-1}\xx^\mathsmaller{T})\,\bomega) = 1 - \bomega D^{-1}\xx^\mathsmaller{T}\,. 
\]
Thus
\[
\det(M'') = \det(D - \xx^\mathsmaller{T}\bomega) = (1 - \bomega D^{-1} \xx^\mathsmaller{T})\det(D) = \prod_{i=1}^k(1+x_i)-\sum_{j=1}^k \omega_jx_j\prod_{i \neq j}(1+x_i)\,.
\]
From the formulas \eqref{eq: formula det M'} and \eqref{eq: formula det M''}, we conclude that
\begin{align*}
\det(I_{k+1}-TA) & = \det(M) = (-1)^{k+1}y\det(M')+\det(M'') = \\
& = -y\,x_1\prod_{i=2}^k(1+x_i)+\prod_{i=1}^k(1+ x_i)- \sum_{j=1}^k \omega_jx_j\prod_{i \neq j}(1+ x_i)\,,
\end{align*}
which establishes the result.\qedhere
\end{proof}

\begin{Example}
For $k=2$ and $\bomega=(1,1)$ we have 
\begin{align*}
\sum_{\nn\in\mathbb{N}^k}\sum_{\delta=0}^{\infty} d(\nn,\delta,\bomega)\,\xx^\nn y^\delta &= \frac{x_1}{1-x_1}\cdot\frac{x_2}{1-x_2}\cdot\left|\begin{matrix}1&-x_1&-x_1\\-x_2&1&-x_2\\-y&0&1\end{matrix}\right|^{-1}\\
&= \frac{x_1}{1-x_1}\cdot\frac{x_2}{1-x_2}\cdot\frac{1}{1-x_1y-x_1x_2-x_1x_2y}=2y x_1^2x_2^2+3y^2x_1^3x_2^2+\cdots
\end{align*}
This fits with \cite[Table 1]{OSh}.
Note that for $y=0$, when substituted into the first expression of the second line, we recover the formula in \cite[Theorem 1]{FO}. 
\end{Example}

\begin{Remark}\label{rmk: simplify det M' and det M''}
If $\omega_1=\cdots=\omega_k=\omega$, the polynomial $H_\bomega(\xx,y)$ may be rewritten more symmetrically as
\begin{align*}
H_{\bomega}(\xx,y) &= -y\,x_1\prod_{i=2}^k(1+x_i)+\prod_{i=1}^k(1+ x_i)-\omega\sum_{j=1}^k x_j\prod_{i \neq j}(1+ x_i)\\
&= -y\,x_1\sum_{i=0}^{k-1}e_i(\widehat{\xx}_1)+\sum_{i=0}^k e_i(\xx)-\omega\sum_{j=0}^{k-1}\sum_{i=1}^k x_i e_j(\widehat{\xx}_i)\\
&= -y\,x_1\sum_{i=0}^{k-1}e_i(\widehat{\xx}_1)+\sum_{i=0}^{k}(1-\omega i)e_i(\xx)\,,
\end{align*}
where $\widehat{\xx}_i$ denotes the vector $(x_1,\ldots,x_{i-1},x_{i+1},\ldots,x_k)$ and for a vector $\xx=(z_1,\ldots,z_k)$, we define $e_0(\xx)\coloneqq 1$ and $e_i(\xx)\coloneqq\sum_{1\le j_1<\cdots<j_i\le k}z_{j_1}\cdots z_{j_i}\quad\forall\,i\in[k]$. 
\end{Remark}

\section{Asymptotics}\label{Asymptotics}

\subsection{Asymptotic behavior of \texorpdfstring{$d(\nn,\bdelta,\bomega)$}{c delta nk} for \texorpdfstring{$n_i\to\infty$}{ni}}
Fix an index $i\in[k]$. We study the asymptotic behaviour of $d(\nn,\bdelta,\bomega)$ when the dimension $n_i$ goes to infinity. If $\delta_i=0$ and $\bomega=(1,\ldots,1)$, we recover \cite[Corollary 3.5]{OSh}. If $\delta_i\ge 1$ and $\omega_i=1$, regardless of the other entries of $\bdelta$ and $\bomega$, we obtain a new stabilization property.

\begin{Proposition}\label{prop: stabilization degree Kalman}
Fix $i\in[k]$ and assume that $\omega_i=1$. Let $n_i-1=\sum_{j\neq i}(n_j-1)+\delta_i$. Then
\[
d(n_1,\ldots,n_{i-1},m,n_{i+1},\ldots,n_k,\bdelta,\bomega)=d(n_1,\ldots,n_{i-1},n_i,n_{i+1},\ldots,n_k,\bdelta,\bomega)\quad\forall m\ge n_i\,.
\]
\end{Proposition}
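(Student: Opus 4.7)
The plan is to derive the stabilization directly from the explicit generating polynomial in Theorem \ref{thm: OSh, partially symmetric case}, exploiting the fact that the hypothesis $\omega_i=1$ kills $t_i$ inside its own variable $\widehat{t_i}$. Without loss of generality I take $i=1$, write $m=n_1$, and use
\[
P=F_1(m)\prod_{j\ge 2}F_j,\qquad F_1(m)=\sum_{j=0}^{m-1}(\widehat{t_1}+h)^{m-1-j}t_1^{\,j},\qquad F_j=\sum_{s=0}^{n_j-1}(\widehat{t_j}+h)^{n_j-1-s}t_j^{\,s}.
\]
The key structural observation is that because $\omega_1=1$, the expression $\widehat{t_1}=\sum_{j\ge 2}\omega_j t_j$ is free of $t_1$, while the factors $F_j$ for $j\ge 2$ do not involve $m$ at all and have $t_1$-degree bounded by $S\coloneqq\sum_{j\ge 2}(n_j-1)$.

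Next I would extract the coefficient of $t_1^{m-1-\delta_1}$ from $P$. Since $(\widehat{t_1}+h)$ contributes no $t_1$, only $F_1(m)$ and $\prod_{j\ge 2}F_j$ feed $t_1$ into the monomial; this gives
\[
[t_1^{m-1-\delta_1}]P=\sum_{j=0}^{m-1}(\widehat{t_1}+h)^{m-1-j}\,[t_1^{m-1-\delta_1-j}]\Bigl(\prod_{j\ge 2}F_j\Bigr).
\]
After the substitution $s=m-1-\delta_1-j$, combined with the vanishing $[t_1^s]\prod_{j\ge 2}F_j=0$ for $s<0$ or $s>S$, the effective range of summation collapses to $0\le s\le S$ precisely when $m-1-\delta_1\ge S$, i.e.\ when $m\ge n_1=S+\delta_1+1$. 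In that regime one obtains
\[
[t_1^{m-1-\delta_1}]P=\sum_{s=0}^{S}(\widehat{t_1}+h)^{\delta_1+s}\,[t_1^{s}]\Bigl(\prod_{j\ge 2}F_j\Bigr),
\]
which is visibly independent of $m$.

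To conclude, I would apply the operator $[h^\delta\prod_{j\ge 2}t_j^{n_j-1-\delta_j}]$ to both sides; since the right-hand side is a polynomial in $h,t_2,\ldots,t_k$ alone and independent of $m$, the resulting integer is the same for every $m\ge n_1$. By definition this integer equals $d(m,n_2,\ldots,n_k,\bdelta,\bomega)$, proving the stabilization. The only delicate point, and the sole place where the hypotheses are used, is verifying that the critical threshold $m\ge n_1$ coincides exactly with the condition ensuring the lower bound $s\ge 0$ in the reindexed sum, which is precisely the relation $n_1-1=\sum_{j\ne 1}(n_j-1)+\delta_1$ assumed in the statement.
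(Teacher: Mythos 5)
Your proof is correct and rests on exactly the same mechanism as the paper's: the hypothesis $\omega_i=1$ makes $\widehat{t_i}$ free of $t_i$, the remaining factors $\prod_{j\neq i}F_j$ have $t_i$-degree at most $S=\sum_{j\neq i}(n_j-1)$, and the threshold $m\ge n_i=S+\delta_i+1$ makes the $t_i$-coefficient extraction independent of $m$ — the paper phrases this as showing a "tail difference" of coefficients vanishes, while you reindex the extraction directly, which is a presentational variant of the same argument. One small slip in your closing sentence: the threshold $m\ge n_1$ is what guarantees that the \emph{upper} cutoff $s\le m-1-\delta_1$ of the reindexed sum lies beyond $S$, whereas the lower bound $s\ge 0$ is automatic (it comes from the vanishing of coefficients of negative powers, together with $\delta_1\ge 0$), not from the hypothesis.
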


\begin{proof}
Recall that $\delta=\sum_{i=1}^k\delta_i$. Let $T_i=\prod_{j\neq i} t_j^{n_j-\delta_j-1}$. We need to compare the coefficients of
\begin{align*}
&(1)\ h^\delta t_i^{m-\delta_i-1}T_i\quad\text{in}\quad\frac{(\widehat{t_i}+h)^m-t_i^m}{\widehat{t_i}+h-t_i}\prod_{j\neq i}\frac{(\widehat{t_j}+h)^{n_j}-t_j^{n_j}}{\widehat{t_j}+h-t_j}=\left[\sum_{l=0}^{m-1}(\widehat{t_i}+h)^{m-1-l}t_i^l\right]\prod_{j\neq i}\frac{(\widehat{t_j}+h)^{n_j}-t_j^{n_j}}{\widehat{t_j}+h-t_j}\\
&(2)\ h^\delta t_i^{n_i-\delta_i-1}T_i\quad\text{in}\quad\frac{(\widehat{t_i}+h)^{n_i}-t_i^{n_i}}{\widehat{t_i}+h-t_i}\prod_{j\neq i}\frac{(\widehat{t_j}+h)^{n_j}-t_j^{n_j}}{\widehat{t_j}+h-t_j}=\left[\sum_{l=0}^{n_i-1}(\widehat{t_i}+h)^{n_1-1-l}t_i^l\right]\prod_{j\neq i}\frac{(\widehat{t_j}+h)^{n_j}-t_j^{n_j}}{\widehat{t_j}+h-t_j}\,.
\end{align*}
Observe that we can replace $(2)$ with the coefficient of $t_i^{m-n_i}h^\delta t_i^{n_i-\delta_i-1}T_i=h^\delta t_i^{m-\delta_i-1}T_i$ in
\[
\left[\sum_{l=0}^{n_i-1}(\widehat{t_i}+h)^{n_i-1-l}t_i^{l+m-n_i}\right]\prod_{j\neq i}\frac{(\widehat{t_j}+h)^{n_j}-t_j^{n_j}}{\widehat{t_j}+h-t_j}= \left[\sum_{s=m-n_i}^{m-1}(\widehat{t_i}+h)^{m-1-s}t_i^s\right]\prod_{j\neq i}\frac{(\widehat{t_j}+h)^{n_j}-t_j^{n_j}}{\widehat{t_j}+h-t_j}\,.
\]

\noindent Now consider the product
\begin{equation}\label{eq: product s from 0 to m-n_1-1}
\left[\sum_{s=0}^{m-n_i-1}(\widehat{t_i}+h)^{m-1-s}t_i^s\right]\prod_{j\neq i}\frac{(\widehat{t_j}+h)^{n_j}-t_j^{n_j}}{\widehat{t_j}+h-t_j}\,.
\end{equation}
Recall that $\widehat{t_i}\coloneqq\left(\sum_{j=1}^{k}\omega_j t_j\right)- t_i$. Since $\omega_i=1$, the maximum degree of $t_i$ in the first factor of \eqref{eq: product s from 0 to m-n_1-1} is $m-n_i-1$. Moreover, the maximum degree of $t_i$ in the second factor of \eqref{eq: product s from 0 to m-n_1-1} is $\sum_{j\neq i}(n_j-1)$. Summing up, the maximum total degree of $t_i$ in \eqref{eq: product s from 0 to m-n_1-1} is
\[
m-n_i-1+\sum_{j\neq i}(n_j-1)=m-\delta_i-1+\left(\delta_i+\sum_{j\neq i}(n_j-1)-n_i\right)<m-\delta_i-1\,
\]
because $\delta_i+\sum_{j\neq i}(n_j-1)-n_i<0$ by hypothesis.
Therefore, the product \eqref{eq: product s from 0 to m-n_1-1} gives no contribution to the coefficient of $h^\delta t_i^{m-\delta_i-1}T_i$. This implies the stabilization.
\end{proof}

\begin{Remark}
Proposition \ref{prop: stabilization degree Kalman} does not hold if we study the asymptotic behaviour of $d(\nn,\bdelta,\bomega)$ when $n_i\to\infty$ for $\omega_i>1$. An immediate counterexample can be found in the symmetric case $k=1$, see Remark \ref{rmk: degree Kalman symmetric case}.
\end{Remark}

\subsection{Asymptotic behavior of \texorpdfstring{$d(\nn,\bdelta,\bomega)$}{c delta nk} in the binary format for \texorpdfstring{$k\to\infty$}{k to infinity}}

Assume $n_1=\cdots=n_k=2$. Here $\delta_i\in\{0,1\}$ for all $i\in[k]$. By \cite[Theorem 1.2]{OSh}, the integer $d(\nn,\bdelta,\bomega)$ is the coefficient of $h^\delta\prod_{i=1}^kt_i^{1-\delta_i}$ in
\[
(\omega_1t_1+\cdots+\omega_kt_k+h)^k = \sum_{j_1+\cdots+j_k+l=k}\binom{k}{j_1,\ldots,j_k,l}(\omega_1t_1)^{j_1}\cdots (\omega_kt_k)^{j_k}h^l\,,
\]
where $\binom{k}{j_1,\ldots,j_k,l}$ is the multinomial coefficient.
Setting $\mathcal{P}_0\coloneqq\{i\in[k]\mid\delta_i=0\}$, we have that $|\mathcal{P}_0|=k-\delta$ and $d(\nn,\bdelta,\bomega)=\binom{k}{\delta}\prod_{i\in\mathcal{P}_0}\omega_i$. In particular, the growth of $d(\nn,\bdelta,\bomega)$ is factorial in $k$.

\subsection{Asymptotic behavior of \texorpdfstring{$d(\nn,\delta,\bomega)$}{c delta nk} in the hypercubical format \texorpdfstring{$n^k$}{n k} for \texorpdfstring{$n\to\infty$}{n}}

In Theorem \ref{thm: generating function, partially symmetric case} we derived a generating function for the integers $d(\nn,\delta,\bomega)$, hence for $\bdelta=(\delta,0,\ldots,0)$. Observe that the polynomial $H_\bomega(\xx,y)$ in \eqref{eq: def H, partially symmetric} is of the form $H_\bomega(\xx,y)=-H_{1,\bomega}(\xx)y+H_{2,\bomega}(\xx)$.
Therefore, we can rewrite the generating function of the degrees $d(\nn,\delta,\bomega)$ as
\begin{equation}\label{eq: generating function delta fixed}
\sum_{\nn\in\mathbb{N}^k}\sum_{\delta=0}^{\infty} d(\nn,\delta,\bomega)\,\xx^\nn y^\delta = \frac{1}{-H_{1,\bomega}(\xx)y+H_{2,\bomega}(\xx)}\prod_{i=1}^k\frac{x_i}{1-x_i}=\sum_{\delta=0}^{\infty}F(\xx)\,y^\delta\,,
\end{equation}
where
\begin{equation}\label{eq: def F}
F_\bomega(\xx)\coloneqq \frac{H_{1,\bomega}(\xx)^\delta}{H_{2,\bomega}(\xx)^{\delta+1}}\prod_{i=1}^k\frac{x_i}{1-x_i}=\frac{H_{1,\bomega}(\xx)^\delta\prod_{i=1}^k x_i(1-x_i)^\delta}{\left[H_{2,\bomega}(\xx)\prod_{i=1}^k(1-x_i)\right]^{\delta+1}}=\frac{F_{N,\bomega}(\xx)}{[F_{D,\bomega}(\xx)]^{\delta+1}}\,.
\end{equation}
From now on, we restrict to the case $\bomega=\omega{\bf 1}$. By Remark \ref{rmk: simplify det M' and det M''} we have
\[
H_{1,\omega{\bf 1}}(\xx) = x_1\sum_{i=0}^{k-1}e_i(\widehat{\xx}_1)\,,\quad H_{2,\omega{\bf 1}}(\xx)=\sum_{i=0}^{k}(1-\omega i)e_i(\xx)\,.
\]
In this case, the reduced denominator $F_{D,\omega{\bf 1}}(\xx)$ of $F_{\omega{\bf 1}}(\xx)$ is symmetric with respect to the variables $x_i$ and coincides with the denominator of the generating function obtained in \cite[Proposition 1]{EZ} when $\omega=1$. The differences with \cite[Proposition 1]{EZ} are the numerator and the exponent $\delta+1$ in the denominator.

Our goal is to fix $\delta$ and study the asymptotic behaviour of $d(n,\delta,\omega)\coloneqq d(n{\bf 1},\delta,\omega{\bf 1})$ for $n\to\infty$. This can be done applying the next result by Raichev and Wilson \cite{RW}. We refer to that paper for the definitions of strictly minimal, critical, isolated, and non-degenerate point needed in the statement.

\begin{Theorem}{\cite[Theorem 3.2]{RW}}\label{thm: Raichev-Wilson}
Let $k\geq 2$ and let $G = G_N/G_D^{\delta+1}$, whose Taylor expansion in a neighborhood of the origin is $\sum_{\balpha\in \mathbb{N}^k} g_{\balpha}\,\xx^\balpha$. Suppose $\cc\in \mathcal{V} = \{ G_D(\xx) = 0 \}$ is smooth with $c_k\partial_k G_D(\cc) \neq 0$, strictly minimal, critical, isolated, and non-degenerate. Then, for all $N\in\mathbb{N}$, as $n\to+\infty$,
\[
g_{n {\bf 1}} = \cc^{-n{\bf 1}} \left[ \left((2\pi n)^{k-1} \det \tilde{g}''(0) \right)^{-1/2}\sum_{i=0}^{\delta}\sum_{j<N} \frac{(n+1)^{\overline{\delta-i}}}{(\delta-i)!\,i!}n^{-j} L_j(\tilde{u}_i, \tilde{g}) + O\left(n^{\delta-(k-1)/2-N}\right)\right]\,,
\]
where $r^{\overline{s}}\coloneqq r(r+1)\cdots(r+s-1)$. In the original formula of Raichev-Wilson, we substituted $\balpha = {\bf 1}$ and $p=\delta+1$. 
\end{Theorem}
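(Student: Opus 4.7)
The plan is to apply Theorem \ref{thm: Raichev-Wilson} to the generating function \eqref{eq: generating function delta fixed}, reading off the asymptotics of $d(n,\delta,\omega)$ from $g_{n{\bf 1}}$ with $G=F_{\omega{\bf 1}}=F_{N,\omega{\bf 1}}/F_{D,\omega{\bf 1}}^{\,\delta+1}$ as in \eqref{eq: def F}. Recall from Remark \ref{rmk: simplify det M' and det M''} that $F_{D,\omega{\bf 1}}(\xx)=H_{2,\omega{\bf 1}}(\xx)\prod_i(1-x_i)$ with $H_{2,\omega{\bf 1}}(\xx)=\sum_{i=0}^k(1-\omega i)e_i(\xx)$.

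First I would locate a strictly minimal critical point $\cc\in\mathcal{V}=\{F_{D,\omega{\bf 1}}=0\}$ using the $S_k$-symmetry, searching along the diagonal $\cc=c{\bf 1}$. Using $e_i(c{\bf 1})=\binom{k}{i}c^i$ and $\sum_i i\binom{k}{i}c^i=kc(1+c)^{k-1}$ one obtains the factorization
\[
H_{2,\omega{\bf 1}}(c{\bf 1})=(1+c)^{k-1}\bigl[1-(\omega k-1)c\bigr],
\]
so the admissible critical value is $c=1/(\omega k-1)$, giving $\cc^{-n{\bf 1}}=(\omega k-1)^{kn}$, in agreement with the claimed exponential rate. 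The hypothesis "$k\ge 3$, or $k=2$ and $\omega\ge 2$" is equivalent to $\omega k\ge 3$, which ensures $c\in(0,1)$ and separates $\cc$ both from the boundary root $c=1$ of $\prod_i(1-x_i)$ and from the multiplicity $k-1$ root $c=-1$ of $(1+c)^{k-1}$.

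Next I would verify the hypotheses of Theorem \ref{thm: Raichev-Wilson}. Smoothness at $\cc$ reduces to $c\,\partial_k F_{D,\omega{\bf 1}}(\cc)\ne 0$; a calculation parallel to the one above yields $\partial_j H_{2,\omega{\bf 1}}(c{\bf 1})=(1+c)^{k-2}[(1-\omega)-(\omega k-1)c]$, which at $c=1/(\omega k-1)$ simplifies to $-\omega\bigl(\tfrac{\omega k}{\omega k-1}\bigr)^{k-2}$, nonzero under our assumption. Criticality $x_i\partial_iF_D=x_j\partial_jF_D$ is automatic at $\cc=c{\bf 1}$ by the $S_k$-symmetry of $F_{D,\omega{\bf 1}}$. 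Strict minimality of $\cc$ on the torus $\{|y_i|=c\}$ follows from a coefficient-positivity argument applied to the Taylor expansion of $1/F_{D,\omega{\bf 1}}$ at the origin together with a triangle-inequality estimate on $|H_{2,\omega{\bf 1}}|$. Isolation and non-degeneracy are consequences of the explicit Hessian computation below.

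Finally I would evaluate the leading asymptotics. For $n\to\infty$ the dominant term in Theorem \ref{thm: Raichev-Wilson} is the one with $i=j=0$, since $(n+1)^{\overline{\delta-i}}\sim n^{\delta-i}$, producing the predicted $n^{\delta-(k-1)/2}$ growth. The Hessian $\tilde g''(0)$ coming from the implicit parameterization of $\mathcal{V}$ near $\cc$ by $(x_1,\dots,x_{k-1})$ inherits the $S_{k-1}$-symmetry and therefore has the form $aI_{k-1}+bJ_{k-1}$, whose determinant $a^{k-2}(a+(k-1)b)$ reduces to a clean monomial in $\omega k$, $\omega k-1$, $\omega k-2$. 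Combined with the evaluations $H_{1,\omega{\bf 1}}(\cc)=c(1+c)^{k-1}$ and $\prod_i c_i(1-c_i)^\delta=c^k(1-c)^{k\delta}$ on the numerator side, and with $L_0(\tilde u_0,\tilde g)$ read off directly from the Raichev--Wilson setup as the ratio of $F_{N,\omega{\bf 1}}(\cc)$ to an appropriate power of $c\,\partial_k F_{D,\omega{\bf 1}}(\cc)$, the constant in the theorem assembles from explicit algebraic bookkeeping; the factor $\bigl(\tfrac{\omega k}{\omega k-1}\bigr)^\delta$ arises naturally as $H_{1,\omega{\bf 1}}(\cc)^\delta$ divided by the $\delta$-th power of the corresponding denominator quantity. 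The $O(1/n)$ error is the $j=1$ term in the expansion. The main obstacle is the Hessian determinant and $L_0$ evaluation: once the $S_{k-1}$-symmetry is exploited to reduce to the single parameter $c=1/(\omega k-1)$, the remaining work is routine algebra, but the bookkeeping is the place where small exponent mistakes (e.g.\ the $(3k-1)/2$ power of $\omega k-2$) are easy to make and must be checked carefully.
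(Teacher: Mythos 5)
Your proposal does not prove the statement in question. The statement is Theorem \ref{thm: Raichev-Wilson}, an external result quoted (up to the substitutions $\balpha={\bf 1}$ and $p=\delta+1$) from Raichev and Wilson; the paper offers no proof of it, only the citation. Your argument opens with ``the plan is to apply Theorem \ref{thm: Raichev-Wilson} to the generating function \eqref{eq: generating function delta fixed}'', i.e.\ it assumes the very theorem it is supposed to establish, which is circular. What you have actually sketched is the downstream application: locating the diagonal critical point $\cc=\frac{1}{\omega k-1}{\bf 1}$ of $\{F_{D,\omega{\bf 1}}=0\}$, checking smoothness, minimality, criticality and non-degeneracy, and evaluating $\det\tilde g''(0)$ and $L_0(\tilde u_0,\tilde g)$. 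That is the content of Propositions \ref{prop: critical point Pantone} and \ref{prop: L_0} and of the proof of Theorem \ref{thm: deg Kalman approx}, not of Theorem \ref{thm: Raichev-Wilson} itself.

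A proof of the stated theorem would have to go through the machinery of multivariate coefficient asymptotics: represent $g_{n{\bf 1}}$ by a Cauchy integral over a polytorus near the origin, use strict minimality to deform the contour onto the singular variety $\mathcal V$ at the critical point $\cc$, pass to the local parametrization $\tilde g$ of $\mathcal V$ to reduce the integral to Fourier--Laplace form, and extract the expansion by stationary phase; this is where the factor $\left((2\pi n)^{k-1}\det\tilde g''(0)\right)^{-1/2}$, the differential operators $L_j$, and the rising factorials $(n+1)^{\overline{\delta-i}}$ coming from the pole of order $\delta+1$ all originate. None of this appears in your proposal. If your intended target was Theorem \ref{thm: deg Kalman approx}, your outline is broadly consistent with the paper's route through Propositions \ref{prop: critical point Pantone} and \ref{prop: L_0}; but as a proof of Theorem \ref{thm: Raichev-Wilson} it is vacuous.
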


The functions $\tilde{g}$, $\tilde{u}_i$ and $L_j$ appearing in Theorem \ref{thm: Raichev-Wilson} are defined in \cite[Definition 3.1]{RW} and in the statement of \cite[Theorem 3.2]{RW}.

\begin{Proposition}\label{prop: critical point Pantone}
The point $\cc=\left(\frac{1}{\omega k-1},\ldots,\frac{1}{\omega k-1}\right)$ is smooth for $\mathcal{V} = \{ F_{D,\omega{\bf 1}}(\xx) = 0 \}$ with $c_k\partial_k F_{D,\omega{\bf 1}}(\cc)\neq 0$, strictly minimal, critical, isolated, and non-degenerate.
\end{Proposition}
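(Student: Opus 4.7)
The plan is to verify in turn the six conditions required by Theorem \ref{thm: Raichev-Wilson}: smoothness of $\mathcal V$ at $\cc$, $c_k\partial_k F_{D,\omega{\bf 1}}(\cc)\neq 0$, critical, strictly minimal, isolated, and non-degenerate. Set $a\coloneqq 1/(\omega k-1)$; since $\omega k\ge 3$ under the hypothesis of Theorem \ref{thm: deg Kalman approx}, $a<1$, so the factor $\prod_i(1-x_i)$ of $F_{D,\omega{\bf 1}}$ does not vanish at $\cc$ and we may work with $H_{2,\omega{\bf 1}}$ for all local computations. Restriction to the diagonal yields $\Phi(t)\coloneqq F_{D,\omega{\bf 1}}(t{\bf 1})=(1+t)^{k-1}\bigl(1-(\omega k-1)t\bigr)(1-t)^k$, which confirms that $\cc\in\mathcal V$. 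By the $S_k$-symmetry of $F_{D,\omega{\bf 1}}$ we have $\Phi'(t)=k\,\partial_k F_{D,\omega{\bf 1}}(t{\bf 1})$, and at $t=a$ only the summand in which the factor $1-(\omega k-1)t$ is differentiated survives, giving
\[
c_k\,\partial_k F_{D,\omega{\bf 1}}(\cc)=-\frac{a(\omega k-1)}{k}(1+a)^{k-1}(1-a)^k\neq 0.
\]
This simultaneously yields smoothness and condition $c_k\partial_k F_{D,\omega{\bf 1}}(\cc)\neq 0$; criticality then follows from symmetry, since all the products $c_i\,\partial_i F_{D,\omega{\bf 1}}(\cc)$ coincide.

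The core of the proof is strict minimality. On the closed polydisc $\{|x_i|\le a\}$ the condition $F_{D,\omega{\bf 1}}(\xx)=0$ is equivalent, after dividing out the nonvanishing factors $\prod(1-x_i)\prod(1+x_i)$, to $\omega\sum_j \mu(x_j)=1$, where $\mu(x)\coloneqq x/(1+x)$. The M\"{o}bius image $\mu(\{|x|\le a\})$ is a closed disk $\bar D$, determined by its real diameter $[\mu(-a),\mu(a)]=[-1/(\omega k-2),\,1/(\omega k)]$: it has center $-1/\bigl(\omega k(\omega k-2)\bigr)$ and radius $(\omega k-1)/\bigl(\omega k(\omega k-2)\bigr)$. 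Consequently, the Minkowski sum $k\bar D$ is a closed disk whose rightmost real point equals $k\cdot 1/(\omega k)=1/\omega$; the equality case of the triangle inequality then forces each $\mu(x_j)$ to coincide with the unique rightmost point $1/(\omega k)$ of $\bar D$, i.e.\ $x_j=a$ for all $j$. Thus $\cc$ is the unique point of $\mathcal V$ in the polydisc, establishing strict minimality and, a fortiori, isolation of the critical point.

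Finally, for non-degeneracy, the implicit function theorem provides a local parametrization $x_k=h(x_1,\ldots,x_{k-1})$ of $\mathcal V$ near $\cc$, and one forms the auxiliary function $\tilde g(\theta)=\log h(ae^{i\theta_1},\ldots,ae^{i\theta_{k-1}})-\log a$ of \cite[Definition 3.1]{RW}. The residual $S_{k-1}$-symmetry forces the Hessian $\tilde g''(0)$ to have the form $(\alpha-\beta)I_{k-1}+\beta J$, with $J$ the all-ones matrix, so that its determinant equals $(\alpha-\beta)^{k-2}\bigl(\alpha+(k-1)\beta\bigr)$. A direct second-derivative computation along the diagonal, using the explicit form of $\Phi$ obtained above together with implicit differentiation of $F_{D,\omega{\bf 1}}(x_1,\ldots,x_{k-1},h)=0$ at $\cc$, produces $\alpha$ and $\beta$ as rational expressions in $a$ and $\omega$, and one checks that both factors of the determinant are nonzero precisely under the hypotheses $k\ge 3$ or ($k=2$ and $\omega\ge 2$). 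The principal obstacles are the M\"{o}bius-geometric argument for strict minimality and the Hessian bookkeeping for non-degeneracy; the remaining conditions follow easily from symmetry and the one-variable analysis of $\Phi$.
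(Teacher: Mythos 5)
The paper proves this Proposition by a two-line citation to \cite[Remark 3.10]{OSV} and \cite[Propositions 2.2,\ldots,2.7]{Pan} (Pantone handles the case $\omega=1$, and the adaptation to general $\omega$ is delegated to the cited Remark). Your proof is a genuinely different route: a self-contained direct verification of all six Raichev--Wilson conditions. The diagonal restriction $\Phi(t)=F_{D,\omega\mathbf{1}}(t\mathbf{1})=(1+t)^{k-1}\bigl(1-(\omega k-1)t\bigr)(1-t)^k$ cleanly handles smoothness, the nonvanishing of $c_k\partial_k F_{D,\omega\mathbf{1}}(\cc)$, and criticality-by-symmetry; and the M\"obius/convexity argument for strict minimality (rewriting $F_{D,\omega\mathbf{1}}=0$ on the polydisc as $\omega\sum_j \mu(x_j)=1$ with $\mu(x)=x/(1+x)$, then invoking that the unique extreme point $1/\omega$ of the Minkowski sum $k\bar D$ can only be attained with every $\mu(x_j)$ equal to the rightmost point of $\bar D$) is a nice, illuminating way to see \emph{why} $\cc$ is the only zero on the closed polydisc. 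The trade-off is length versus insight: the paper's citation is short but opaque, while your argument is transparent and self-contained.

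Two points to fix or complete. First, a small error: for the $(k-1)\times(k-1)$ Hessian $\tilde g''(0)$ with common diagonal entry $\alpha$ and common off-diagonal entry $\beta$, i.e.\ $(\alpha-\beta)I_{k-1}+\beta J$, the eigenvalues of $J$ are $k-1$ (once) and $0$ ($k-2$ times), so $\det\tilde g''(0)=(\alpha-\beta)^{k-2}\bigl(\alpha+(k-2)\beta\bigr)$, not $(\alpha-\beta)^{k-2}\bigl(\alpha+(k-1)\beta\bigr)$. Second, the non-degeneracy step is only a sketch: you assert that implicit differentiation produces $\alpha,\beta$ and that ``one checks'' the two eigenvalue factors are nonzero precisely when $\omega k\ge 3$, but the actual values of $\alpha$ and $\beta$ (and hence the confirmation that $\det\tilde g''(0)=(\omega k-2)^{k-1}/(\omega k)^{k-2}$, as recorded in the proof of Theorem \ref{thm: deg Kalman approx}) are not carried out. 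A complete proof should perform this implicit-differentiation computation; as written, non-degeneracy is claimed rather than established.
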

\begin{proof}
The statement is an immediate consequence of \cite[Remark 3.10]{OSV} and \cite[Propositions 2.2,\ldots,2.7]{Pan}.
\end{proof}

\begin{Proposition}\label{prop: L_0}
The following identity holds true:
\[
L_0(\tilde{u}_0, \tilde{g}) = \tilde{u}_0(\cc) = \frac{(\omega k-1)^{k-\delta-1}}{(\omega k)^{k-\delta-2}(\omega k-2)^k}\,.
\]
\end{Proposition}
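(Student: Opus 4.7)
The plan is to evaluate $\tilde{u}_0(\cc)$ at the smooth critical point $\cc = \frac{1}{\omega k - 1}\mathbf{1}$ identified in Proposition \ref{prop: critical point Pantone} by unpacking the Raichev-Wilson parametrization of the pole hypersurface $\mathcal{V} = \{F_{D,\omega\mathbf{1}}=0\}$. Locally around $\cc$, one solves $F_{D,\omega\mathbf{1}}(\xx_{<k}, x_k)=0$ for $x_k$ as an implicit function of $(x_1,\ldots,x_{k-1})$; in the induced coordinates of \cite[Definition 3.1 and Theorem 3.2]{RW} the value $\tilde{u}_0(\cc)$ becomes an explicit rational expression in $F_{N,\omega\mathbf{1}}(\cc)$, $c_k$ and $\partial_k F_{D,\omega\mathbf{1}}(\cc)$. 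The key numerical identity that streamlines everything is $(\omega k - 1)c = 1$, from which one obtains $1+c = \omega k / (\omega k - 1)$ and $1 - c = (\omega k - 2)/(\omega k - 1)$.

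First I would compute $F_{N,\omega\mathbf{1}}(\cc)$. Using the closed form $H_{1,\omega\mathbf{1}}(\xx) = x_1 \prod_{j=2}^k(1+x_j)$ coming from Remark \ref{rmk: simplify det M' and det M''}, one finds $H_{1,\omega\mathbf{1}}(\cc) = (\omega k)^{k-1}/(\omega k - 1)^k$, hence
\[
F_{N,\omega\mathbf{1}}(\cc) = H_{1,\omega\mathbf{1}}(\cc)^{\delta} \prod_{i=1}^k c_i(1-c_i)^{\delta} = \frac{(\omega k)^{(k-1)\delta}(\omega k - 2)^{k\delta}}{(\omega k - 1)^{k(2\delta+1)}}\,.
\]
Second, since $F_{D,\omega\mathbf{1}} = H_{2,\omega\mathbf{1}} \prod_i(1-x_i)$ and $H_{2,\omega\mathbf{1}}(\cc)=0$ (which is the factorization $(1+c)^{k-1}[1 - (\omega k - 1)c] = 0$), the Leibniz rule reduces $\partial_k F_{D,\omega\mathbf{1}}(\cc)$ to $\partial_k H_{2,\omega\mathbf{1}}(\cc) \prod_i(1-c_i)$. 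Differentiating $H_{2,\omega\mathbf{1}}(\xx) = \prod_i(1+x_i) - \omega \sum_j x_j \prod_{i\neq j}(1+x_i)$ and evaluating at $\cc$ gives the collapse
\[
\partial_k H_{2,\omega\mathbf{1}}(\cc) = (1+c)^{k-2}\bigl[(1+c) - \omega(1+kc)\bigr] = -\omega(1+c)^{k-2}\,,
\]
where the last equality uses $\omega k c = 1 + c$. Hence $\partial_k F_{D,\omega\mathbf{1}}(\cc) = -\omega(\omega k)^{k-2}(\omega k - 2)^k/(\omega k - 1)^{2k-2}$.

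Finally, I would substitute these two evaluations into the explicit formula for $\tilde{u}_0(\cc)$ provided by \cite[Section 3.1]{RW} and let the common powers of $\omega k - 1$, $\omega k$ and $\omega k - 2$ telescope; the exponents line up to produce $(\omega k - 1)^{k-\delta-1}/[(\omega k)^{k-\delta-2}(\omega k - 2)^k]$ as stated. The main obstacle is Step 3: carefully reconciling the normalization conventions of \cite{RW} (the placement of $c_k$, of $\omega$-powers coming from $\partial_k H_{2,\omega\mathbf{1}}(\cc)=-\omega(1+c)^{k-2}$, and of combinatorial factors like $\delta!$) so that the simplified expression is exactly $\tilde{u}_0(\cc)$, since a naive ratio $F_{N,\omega\mathbf{1}}(\cc)/(-c_k \partial_k F_{D,\omega\mathbf{1}}(\cc))^{\delta+1}$ produces the correct structure but off by explicit $\omega$- and $\delta!$-factors that must be absorbed into the parametrization. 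Steps 1 and 2 are routine once the identity $(\omega k - 1)c = 1$ is exploited.
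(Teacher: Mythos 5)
Your proposal follows the same basic framework as the paper's proof: reduce $L_0(\tilde{u}_0,\tilde{g})$ to $\tilde{u}_0(\cc)$, evaluate $\tilde{u}_0(\cc)$ via the quotient $F_{N,\omega{\bf 1}}(\cc)/(-c_k\partial_k F_{D,\omega{\bf 1}}(\cc))^{\delta+1}$ from \cite[Proposition 4.3]{RW}, and exploit the identity $(\omega k-1)c=1$. Your Step 1 (computing $F_{N,\omega{\bf 1}}(\cc)$ via the product form of $H_{1,\omega{\bf 1}}$ rather than the binomial sum the paper uses) lands on the same value as the paper. Your Step 2 (a direct Leibniz computation of $\partial_k F_{D,\omega{\bf 1}}(\cc)$, exploiting $H_{2,\omega{\bf 1}}(\cc)=0$) is a cleaner alternative to the paper's appeal to \cite[Proposition 2.7]{Pan}, and your value $\partial_k H_{2,\omega{\bf 1}}(\cc)=-\omega(1+c)^{k-2}$ appears to be algebraically correct.

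However, there is a genuine gap in Step 3, and you flag it yourself without resolving it. The naive quotient with your Step~2 value yields
\[
\frac{F_{N,\omega{\bf 1}}(\cc)}{\bigl(-c_k\partial_k F_{D,\omega{\bf 1}}(\cc)\bigr)^{\delta+1}}
=\frac{(\omega k-1)^{k-\delta-1}}{\omega^{\delta+1}\,(\omega k)^{k-\delta-2}(\omega k-2)^k}\,,
\]
which is off from the stated Proposition by $\omega^{\delta+1}$ (the paper's cited value $-c_k\partial_k F_{D,\omega{\bf 1}}(\cc) = (\omega k)^{k-2}(\omega k-2)^k/(\omega k-1)^{2k-1}$ has no $\omega$ in front, because Pantone's Proposition 2.7 is specific to $\omega=1$). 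Asserting that the extra $\omega$- and $\delta!$-factors ``must be absorbed into the parametrization'' is not a proof; either your formula for $\tilde{u}_0(\cc)$ matches \cite[Proposition 4.3]{RW} exactly and the claimed right-hand side needs correction, or there is an explicit normalization in Raichev--Wilson that accounts for the factor, and it must be written down. As things stand, the computations you actually carry out produce a different constant from the one in the statement, and the step that would reconcile them is left open.
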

\begin{proof}
Recall that $L_j(\tilde{u}_i,\tilde{g})$ is defined in \cite[Theorem 3.2]{RW}. For $j=0$, we have $L_0(\tilde{u}_i,\tilde{g}) = \tilde{u}_i(\cc)$. 
By \cite[Proposition 4.3]{RW}, we have $\tilde{u}_0(\cc) = \frac{F_{N,\omega{\bf 1}}(\cc)}{(-c_k \partial_k F_{D,\omega{\bf 1}}(\cc))^{\delta+1}}$, where
\begin{align*}
F_{N,\omega{\bf 1}}(\cc) &= \left[c_1\sum_{i=0}^{k-1}e_i(\widehat{\cc}_1)\right]^\delta\prod_{i=1}^k c_i(1-c_i)^\delta\\
&= \left[\frac{1}{\omega k-1}\sum_{i=0}^{k-1}\binom{k-1}{i}\left(\frac{1}{\omega k-1}\right)^i\right]^\delta\left(\frac{1}{\omega k-1}\right)^k\left(\frac{\omega k-2}{\omega k-1}\right)^{\delta k}\\
&= \left[\frac{1}{\omega k-1}\left(\frac{\omega k}{\omega k-1}\right)^{k-1}\right]^\delta\left(\frac{1}{\omega k-1}\right)^k\left(\frac{\omega k-2}{\omega k-1}\right)^{\delta k} = \frac{(\omega k)^{(k-1)\delta}(\omega k-2)^{k\delta}}{(\omega k-1)^{k(2\delta+1)}}\,,
\end{align*}
and, similarly as in the proof of \cite[Proposition 2.7]{Pan}, $-c_k\partial_k F_{D,\omega{\bf 1}}(\cc) = \frac{(\omega k)^{k-2}(\omega k-2)^k}{(\omega k-1)^{2k-1}}$. 
Hence
\[
\tilde{u}_0(\cc) = \frac{F_{N,\omega{\bf 1}}(\cc)}{(-c_k \partial_k F_{D,\omega{\bf 1}}(\cc))^{\delta+1}} = \frac{(\omega k)^{(k-1)\delta}(\omega k-2)^{k\delta}}{(\omega k-1)^{k(2\delta+1)}}\left[\frac{(\omega k-1)^{2k-1}}{(\omega k)^{k-2}(\omega k-2)^k}\right]^{\delta+1} = \frac{(\omega k-1)^{k-\delta-1}}{(\omega k)^{k-\delta-2}(\omega k-2)^k}\,.\qedhere
\]
\end{proof}

\noindent We have the necessary tools to prove Theorem \ref{thm: deg Kalman approx}.

\begin{proof}[Proof of Theorem \ref{thm: deg Kalman approx}]
By Theorem \ref{thm: Raichev-Wilson}, for $N = 1$ and $\cc = (\frac{1}{\omega k-1},\ldots, \frac{1}{\omega k-1})$, we have
\[
d(n,\delta,\omega) = (\omega k-1)^{kn} \left[\frac{1}{(2\pi n)^{\frac{k-1}{2}}\det\tilde{g}''(0)^{\frac{1}{2}}}\sum_{i=0}^{\delta}\frac{(n+1)^{\overline{\delta-i}}}{(\delta-i)!\,i!} L_0(\tilde{u}_i, \tilde{g}) + O\left(\frac{1}{n^{\frac{k+1}{2}-\delta}}\right)\right]\,,
\]
\noindent as $n\to \infty$. Define $\eta_k\coloneqq(2\pi)^{-\frac{k-1}{2}}\det\tilde{g}''(0)^{-\frac{1}{2}}$.
Similarly as in \cite[Proposition 2.7]{Pan}, one verifies that $\det\tilde{g}''(0) = \frac{(\omega k-2)^{k-1}}{(\omega k)^{k-2}}\neq 0$. 
Then
\begin{align*}
d(n,\delta,\omega) & = (\omega k-1)^{kn} \left[\frac{\eta_k\,L_0(\tilde{u}_0,\tilde{g})\,n^\delta}{\delta!\,n^{\frac{k-1}{2}}} + O\left(\frac{1}{n^{\frac{k+1}{2}-\delta}}\right)\right] \\
&= (\omega k-1)^{kn} \left[\frac{\eta_k\,L_0(\tilde{u}_0,\tilde{g})\,n^\delta}{\delta!\,n^{\frac{k-1}{2}}} + \frac{1}{n^{\frac{k-1}{2}-\delta}}\,O\left(\frac{\eta_k\,L_0(\tilde{u}_0,\tilde{g})}{n}\right)\right] \\
&= (\omega k-1)^{kn} \left[\frac{\eta_k\,L_0(\tilde{u}_0,\tilde{g})\,n^\delta}{n^{\frac{k-1}{2}}} + \frac{\eta_k\,L_0(\tilde{u}_0,\tilde{g})}{n^{\frac{k-1}{2}-\delta}}\,O\left(\frac{1}{n}\right)\right]\\
&= \eta_k\,L_0(\tilde{u}_0,\tilde{g})\frac{(\omega k-1)^{kn}}{n^{\frac{k-1}{2}-\delta}} \left[1+O\left(\frac{1}{n}\right)\right]\,.
\end{align*}
Conclusion follows by plugging in the identity for $L_0(\tilde{u}_0,\tilde{g})$ in Proposition \ref{prop: L_0}.
\end{proof}

\begin{Remark}
When $\omega=1$, we obtain the following $O(1/n)$-approximations for $d(n,\delta,1)$:
\[
d(n,\delta,1) \approx 
\begin{cases}
\frac{2}{\sqrt{3}\pi}\left(\frac{3}{2}\right)^\delta 8^n\,n^{\delta-1} & \mbox{if $k=3$}\\
\frac{27}{2^9\pi\sqrt{\pi}}\left(\frac{4}{3}\right)^\delta 81^{n}\,n^{\delta-\frac{3}{2}} & \mbox{if $k=4$}\,.
\end{cases}
\]
\end{Remark}

\section{Kalman varieties of partially symmetric singular vector \texorpdfstring{$k$}{k}-tuples}\label{sec: kalmanofsymmetricvectortuples}

In this final section, we restrict to the case $V_1=\cdots=V_k=V$ for some $n$-dimensional complex vector space $V$ and we assume that all isotropic quadrics $Q_i$ coincide with the quadric $Q\subset\PP(V)$. We also use the notations $\PP = \PP(V)^{\times k}$ and $Q^k = \prod_{i=1}^k Q\subset\PP$.
Our goal is to introduce another kind of Kalman variety. To motivate this construction, let us consider the matrix case. In this context, one may wonder what is the closure of the locus of matrices $A\in V\otimes V$ possessing a non-isotropic singular vector pair of the form $(\xx,\xx)$ for some $\xx\in V$. For $n=2$, this locus coincides with the subspace $S^2V\subset V\otimes V$ of $2\times 2$ symmetric matrices. As we shall see, for $n\ge 3$ this locus is not a linear subspace and contains the subspace $S^2V$ as a proper closed subset. 
\begin{Definition}
The {\it normalized symmetric Kalman variety} is the variety
\begin{equation}
\kappa^{nor}_{n,k} \coloneqq \overline{\{T\in\PP(V^{\otimes k})\mid \mbox{$T$ has a singular vector $k$-tuple $([\xx],\ldots,[\xx])$ for some $[\xx]\in \PP(V)\setminus Q$}\}}\,.
\end{equation}
In complete analogy with the generalized Kalman variety introduced in Definition \ref{def: genKalman}, one can define the {\em generalized symmetric Kalman variety}
\begin{equation}
\kappa_{n,k}\coloneqq \{T\in\PP(V^{\otimes k})\mid \mbox{$T$ has a singular vector $k$-tuple $([\xx],\ldots,[\xx])$ for some $[\xx]\in \PP(V)$}\}\,.
\end{equation}
As shown in Example \ref{ex: deg7example}, the generalized symmetric Kalman variety $\kappa_{n,k}$ may be reducible already for matrices.
\end{Definition}

\begin{Theorem}\label{thm: codim Kalman symmetric tuples}
The variety $\kappa^{nor}_{n,k}$ is irreducible of codimension $(k-1)(n-1)$ in $\PP(V^{\otimes k})$.
\end{Theorem}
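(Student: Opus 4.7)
The plan is to mimic the argument used in the proof of Proposition \ref{prop: DimensionsKalmanstrata}$(i)$, replacing the product $Z=\prod Z_i$ by the diagonal of $\PP=\PP(V)^{\times k}$ restricted to the non-isotropic locus. Concretely, I would introduce the incidence
\[
\mathcal{I}_{n,k} \coloneqq \{(T, [\xx]) \in \PP(V^{\otimes k}) \times (\PP(V) \setminus Q) \mid ([\xx], \ldots, [\xx]) \text{ is a singular vector $k$-tuple of } T\}
\]
with its two projections $\pi_1$ and $\pi_2$ onto $\PP(V^{\otimes k})$ and $\PP(V)\setminus Q$. By construction $\kappa^{nor}_{n,k} = \overline{\pi_1(\mathcal{I}_{n,k})}$, so the goal reduces to showing that $\mathcal{I}_{n,k}$ is irreducible of the expected dimension $n^k-1-(k-1)(n-1)$ and that $\pi_1|_{\mathcal{I}_{n,k}}$ is generically finite onto its image.

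First I would analyze $\pi_2$. By Lemma \ref{lem: characterize singular vector tuples}, the fiber $\pi_2^{-1}([\xx])$ equals $\PP((W_1+\cdots+W_k)^\perp)$ with $W_i = \xx^{\otimes(i-1)} \otimes \langle\xx\rangle^\perp \otimes \xx^{\otimes(k-i)}$. Although all $\xx_i$ now collide, the rank-one analysis of Theorem \ref{thm: fibers} still applies because $\xx$ is non-isotropic: choosing a $q$-orthonormal basis $e_1=\xx/\sqrt{q(\xx,\xx)}, e_2,\ldots,e_n$, the subspace $W_i$ is coordinate-supported on tensor positions $(1,\ldots,1,j,1,\ldots,1)$ with $j\neq 1$ at slot $i$, and these coordinate supports are pairwise disjoint across $i\in[k]$. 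Hence $W_1\oplus\cdots\oplus W_k$ has dimension $k(n-1)$, so $\pi_2$ is surjective with equidimensional linear projective fibers of codimension $k(n-1)$. Lemma \ref{lem: components}$(i)$ yields that $\mathcal{I}_{n,k}$ is irreducible of dimension $n^k-1-(k-1)(n-1)$, and pushing forward through the closed projection to $\PP(V^{\otimes k})$ shows that $\kappa^{nor}_{n,k}$ is irreducible.

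The \emph{main obstacle} is showing $\pi_1|_{\mathcal{I}_{n,k}}$ is generically finite. For this I would exhibit an explicit witness tensor $T_0 = \sum_{i=1}^n \alpha_i\, e_i^{\otimes k}$ with generic scalars $\alpha_i\in\C^*$ and observe that all $k$ contractions $T_0(\xx^{\otimes(k-1)})$ at the omitted slots coincide with $\sum_i \alpha_i\xx_i^{k-1}e_i$. The conditions for $[\xx]$ to be a diagonal singular vector $k$-tuple of $T_0$ collapse to the single system $\alpha_i\xx_i^{k-1}=\sigma\xx_i$ for $i\in[n]$ and some $\sigma\in\C$, forcing each coordinate to satisfy $\xx_i=0$ or $\alpha_i\xx_i^{k-2}=\sigma$; this produces a finite set of projective solutions (the $n$ points $[e_i]$ when $k=2$, and $\sum_{s\geq 1}\binom{n}{s}(k-2)^{s-1}$ points when $k\geq 3$), all non-isotropic for generic $\alpha_i$. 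Upper semi-continuity of fiber dimension along the morphism of irreducible varieties $\pi_1|_{\mathcal{I}_{n,k}}\colon\mathcal{I}_{n,k}\to\kappa^{nor}_{n,k}$ then forces the generic fiber to be zero-dimensional, giving $\dim\kappa^{nor}_{n,k}=\dim\mathcal{I}_{n,k}=n^k-1-(k-1)(n-1)$ and codimension $(k-1)(n-1)$ as asserted.
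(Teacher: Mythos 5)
Your proof is correct and follows the same overall skeleton as the paper's: build the incidence variety over the non-isotropic part of the diagonal, compute the fiber dimension of the projection to $\PP(V)\setminus Q$, invoke Lemma~\ref{lem: components} for irreducibility, and finish by showing the projection to tensor space is generically finite. Two small remarks and one substantive comparison.

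First, your explicit coordinate verification that $W_1\oplus\cdots\oplus W_k$ has dimension $k(n-1)$ when $\xx_1=\cdots=\xx_k=\xx$ is correct, but it is already covered by Theorem~\ref{thm: fibers}: that theorem's statement and inductive proof apply to \emph{any} tuple with $r$ isotropic components, with no hypothesis that the $\xx_i$ be distinct, so the paper simply cites it directly for $r=0$. Your worry that colliding factors might require a separate check is reasonable precaution when writing blind, but the general result subsumes it.

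Second, the place where your route genuinely differs from the paper's is the generic-finiteness step. The paper argues via $\mathrm{SO}$-equivariance: the special orthogonal group acts transitively on non-isotropic points, so the question reduces to one fixed non-isotropic diagonal point, and then the finiteness of singular vector $k$-tuples for a general tensor in its linear fiber (ultimately the Friedland--Ottaviani count) does the work — exactly as in Proposition~\ref{prop: DimensionsKalmanstrata}. You instead produce the explicit witness $T_0=\sum_i\alpha_i\,e_i^{\otimes k}$ with generic $\alpha_i$, solve the singular-pair equations $\alpha_i\xx_i^{k-1}=\sigma\xx_i$ to get a finite (and, for generic $\alpha_i$, non-isotropic) solution set, and invoke upper semi-continuity of fiber dimension. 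Both are correct; yours is more elementary and self-contained, making the finiteness visible in coordinates, while the paper's is shorter because it reuses the symmetry argument already in place for Proposition~\ref{prop: DimensionsKalmanstrata}. Your explicit witness also has the incidental benefit of giving a lower bound on the number of diagonal singular vector $k$-tuples of a general tensor in $\kappa^{nor}_{n,k}$, which the paper's argument does not make visible.
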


\begin{proof}
Consider the spectral variety $\Sigma_{n{\bf 1},{\bf 1}}$ along with the two projections $\alpha$ and $\beta$ of $\Sigma_{n{\bf 1},{\bf 1}}$ onto $\PP(V^{\otimes k})$ and $\PP$, as in diagram \eqref{eq: diagram spectral variety}. 
By Theorem \ref{thm: fibers}, every fiber of $\beta$ at each point of $\PP\setminus Q^k$ is a linear subspace of codimension $k(n-1)$ in $\PP(V^{\otimes k})$. 

\smallskip
\noindent Consider the Segre embedding $\sigma\colon\PP\to\PP(V^{\otimes k})$, $\sigma([\xx_1],\ldots,[\xx_k])=[\xx_1\otimes\cdots\otimes\xx_k]$. 
Denote by $v_{n,k}$ the $k$-th Veronese embedding of $\PP(V)$, as a subvariety of $\PP(V^{\otimes k})$. The variety $\kappa^{nor}_{n,k}$ has the following description in terms of the diagram mentioned above: 
\[
\kappa^{nor}_{n,k}=\alpha(\overline{\beta^{-1}(\sigma^{-1}(v_{n,k})\setminus Q^k)}), 
\]
where $\sigma^{-1}(v_{n,k})=\PP\cap\Delta$ and $\Delta$ denotes the diagonal.
Let $Y = \overline{\beta^{-1}(\sigma^{-1}(v_{n,k})\setminus Q^k)}$. By Lemma \ref{lem: components}, $Y$ is irreducible and so is $\kappa^{nor}_{n,k}$. 
Moreover, we have $\dim(Y) = \dim(\PP\cap\Delta)+\dim(\PP(V^{\otimes k}))-(n-1)k = n^k-1-(k-1)(n-1)$. 

The general fiber of $\alpha$ restricted to $Y$ is finite.
Indeed, any point of $\sigma^{-1}(v_{n,k})\setminus Q^k$ is the equivalence class of a tuple of the same non-isotropic vector. The group $\mathrm{SO}(V)^{\times k}$ acts transitively on equivalence classes of non-isotropic vectors in $\PP$. Thus a general tensor in the fiber under $\beta$ of any point in the diagonal $\sigma^{-1}(v_{n,k})\setminus Q^k$ has a finite number of singular vector $k$-tuples. 
This implies the finiteness of $\alpha$ restricted to $Y$. The variety $\kappa^{nor}_{n,k}$ is irreducible and has dimension $\dim(Y)$. Hence $\kappa^{nor}_{n,k}$ has codimension $(k-1)(n-1)$ in $\PP(V^{\otimes k})$. 
\end{proof}

\begin{Example}\label{ex: deg7example}
Consider $k=2$, $V=\C^n$ and $Q=\mathcal{V}(x_1^2+\cdots+x_n^2)\subset\PP(V)$.
We have $\mathrm{codim}(\kappa^{nor}_{n,2})=n-1$. In particular, one finds $\kappa^{nor}_{2,2}=\PP(S^2\C^2)$. The first non-trivial case is the normalized symmetric Kalman variety $\kappa^{nor}_{3,2}$ which is a subvariety of $\PP(\C^3\otimes\C^3)$ of codimension $2$ and degree $7$. 
By Remark \ref{rmk: equal singular values}, $\kappa^{nor}_{3,2}$ coincides with the locus of matrices admitting a usual algebraic singular vector pair.
This ideal is generated by three cubics that may be recovered with the following construction, suggested to us by Jan Draisma.

Let $A=(a_{ij})\in\C^n\otimes\C^n$ and suppose $(\yy,\yy)$ is a (non-isotropic) singular vector pair of $A$.
Up to scaling $\yy$, we may write $A = \yy\yy^T + C$ and $A^T = \yy\yy^T+C^T$. The matrix $B=A-A^{T}$ has matrix rank at most $n-1$. Note that, since the latter matrix is skew-symmetric,  when $n$ is odd and for general $A$, one has $\mathrm{rk}(B) = n-1$. 
In such a case, the cofactor matrix $\mathrm{cof}(B)$ has rank one and satisfies $B\cdot \mathrm{cof}(B)^{T}=0$. So the image of $\mathrm{cof}(B)$ is spanned by the vector $\yy$ above. 

To derive the cubic equations for $n=3$, given $\xx = (x_1,x_2,x_3)^T\in \CC^3$, we have $\yy=\mathrm{cof}(B)\xx$. Since $\yy$ is then an eigenvector of $A$, let $\mathcal{K}$ be the ideal of $2\times 2$ minors of the matrix $[A\yy\mid\yy]$. We verified symbolically in \texttt{Macaulay2} that $\mathcal{K}=\mathcal{I}(\kappa^{nor}_{3,2})\cap\mathcal{J}^2$, where $\mathcal{I}(\kappa^{nor}_{3,2})$ is the ideal of $\kappa^{nor}_{3,2}$ generated by the desired three cubics, and where $\mathcal{J}=\langle x_3a_{12}-x_2a_{13}-x_3a_{21}+x_1a_{23}+x_2a_{31}-x_1a_{32}\rangle$ defines the hyperplane of matrices orthogonal (with respect to the Frobenius inner product) to the skew-symmetric matrix
\[
\begin{pmatrix}
0 & x_3 & -x_2 \\
-x_3 & 0 & x_1 \\
x_2 & -x_1 & 0
\end{pmatrix}\,.
\]
Moreover, $\kappa^{nor}_{3,2}$ is arithmetically Cohen-Macaulay and its reduced singular locus is $\PP(S^2\C^3)$.
The generalized symmetric Kalman variety $\kappa_{3,2}$ is reducible of degree $15$ and codimension $2$. One of its irreducible components is $\kappa^{nor}_{3,2}$.
Further numerical data for the varieties $\kappa^{nor}_{n,2}$ are summarized in Table \ref{table: degrees Kalman square matrices symmetric singular vector pair} for small values of $n$.

\begingroup
\setlength{\tabcolsep}{10pt}
\renewcommand{\arraystretch}{1.1}
\begin{table}[htbp]
\centering
\begin{tabular}{|c||c|c|c|}
\hline
$n$ & $\mathrm{codim}(\kappa^{nor}_{n,2})$ & $\deg(\kappa^{nor}_{n,2})$ & generators of the ideal $\mathcal{I}(\kappa^{nor}_{n,2})$ \\
\hhline{|=||===|}
2 & 1 & 1 & 1 linear\\
3 & 2 & 7 & 3 cubics\\
4 & 3 & 24 & 1 quadric, 1 quartic, 11 sextics\\
5 & 4 & 86 & 5 quartics, 5 quintics, 5 sextics, 31 septics\\
6 & 5 & 314 & 1 cubic, 1 quintic, 86 sextics\\
\hline
\end{tabular}
\vspace*{1mm}
\caption{Codimension and degree of $\kappa^{nor}_{n,2}$ for small $n$.}
\label{table: degrees Kalman square matrices symmetric singular vector pair}
\end{table}
\endgroup
\end{Example}

\begin{Example}
In the binary tensor case $(n=2)$, we obtain that $\mathrm{codim}(\kappa^{nor}_{2,k})=k-1$. The first non-trivial case is $\kappa^{nor}_{2,3}$ which is a subvariety of $\PP(\C^2\otimes\C^2\otimes\C^2)$ of codimension $2$ and degree $5$. Its ideal is generated by one quadric and four quartics.
\end{Example}

\begin{Question}
Table \ref{table: degrees Kalman square matrices symmetric singular vector pair} reports the degrees of the varieties $\kappa^{nor}_{n,2}$ for small values of $n$. What is the degree of $\kappa^{nor}_{n,k}$ in general?
\end{Question}

It is clear that the construction above can be carried out taking into account partial symmetries of singular vector $k$-tuples. For simplicity, we remain in the hypercubical format $n^{\times k}$. To this aim, let $\bomega=(\omega_1,\ldots,\omega_t)$ be a partition of $k$, namely $\omega_i\ge 1$ for all $i$ and $\omega\coloneqq \omega_1+\cdots+\omega_t=k$. Without loss of generality, we assume that $\omega_1\ge\cdots\ge\omega_t$. We denote by $s_{n,k}^\bomega$ the Segre-Veronese embedding of $\PP(V)^{\times t}$ in $\PP(S^{\bomega}V)$, and we consider it as a subvariety of $\PP(V^{\otimes k})$. Define
\begin{equation}
\kappa^{nor}_{n,k,\bomega} \coloneqq \overline{  \{T\in\PP(V^{\otimes k})\mid \mbox{$T$ has a singular vector $k$-tuple $(\underbrace{[\xx_1],\ldots,[\xx_1]}_{\omega_1},\ldots,\underbrace{[\xx_t],\ldots,[\xx_t]}_{\omega_t})\in \PP\setminus Q^k$}\}}.
\end{equation}

\noindent With similar arguments as in the proof of Theorem \ref{thm: codim Kalman symmetric tuples}, one proves the following result.

\begin{Theorem}\label{thm: codim Kalman partially symmetric tuples}
Let $\bomega=(\omega_1,\ldots,\omega_t)$ be a partition of $k$.
The variety $\kappa^{nor}_{n,k,\bomega}$ is irreducible of codimension $(k-t)(n-1)$ in $\PP(V^{\otimes k})$. So the codimension depends only on the number of parts $t$ of $\bomega$.
\end{Theorem}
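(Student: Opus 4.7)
The plan is to adapt the proof of Theorem \ref{thm: codim Kalman symmetric tuples} essentially verbatim, replacing the ordinary diagonal $\sigma^{-1}(v_{n,k}) = \PP \cap \Delta \subset \PP$ with the \emph{generalized diagonal} $\Delta_\bomega \subset \PP$ defined as the image of the embedding
\[
\delta_\bomega\colon \PP(V)^{\times t} \to \PP, \quad ([\xx_1], \ldots, [\xx_t]) \mapsto (\underbrace{[\xx_1], \ldots, [\xx_1]}_{\omega_1}, \ldots, \underbrace{[\xx_t], \ldots, [\xx_t]}_{\omega_t}).
\]
This $\Delta_\bomega$ is an irreducible closed subvariety of $\PP$ of dimension $t(n-1)$, and by definition
\[
\kappa^{nor}_{n,k,\bomega} \,=\, \alpha\bigl(\overline{\beta^{-1}(\Delta_\bomega \setminus Q^k)}\bigr),
\]
where $\alpha$ and $\beta$ are the two projections from the spectral variety $\Sigma_X$ of $X = \PP(V^{\otimes k}) \times \PP$ as in diagram \eqref{eq: diagram spectral variety}.

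Set $Y \coloneqq \overline{\beta^{-1}(\Delta_\bomega \setminus Q^k)}$. By Theorem \ref{thm: fibers}, the restriction of $\beta$ over $\Delta_\bomega \setminus Q^k$ has linear projective fibers of constant codimension $k(n-1)$ in $\PP(V^{\otimes k})$. Applying Lemma \ref{lem: components}$(i)$, we deduce that $Y$ is irreducible, and then so is the closed image $\kappa^{nor}_{n,k,\bomega} = \alpha(Y)$ under the projective morphism $\alpha$. The dimension of $Y$ is
\[
\dim Y \,=\, \dim(\Delta_\bomega) + \dim \PP(V^{\otimes k}) - k(n-1) \,=\, t(n-1) + (n^k - 1) - k(n-1) \,=\, n^k - 1 - (k-t)(n-1).
\]

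It remains to verify that $\alpha|_Y$ is generically finite, so that $\dim \kappa^{nor}_{n,k,\bomega} = \dim Y$. For this, as in the proof of Theorem \ref{thm: codim Kalman symmetric tuples}, we exploit the $\mathrm{SO}(V)^{\times k}$-equivariance of the spectral structure (Remark \ref{rmk: actionSO}). The locus $D \subset \PP(V^{\otimes k})$ of tensors with infinitely many singular vector $k$-tuples is a proper $\mathrm{SO}(V)^{\times k}$-stable closed subvariety — proper because a general tensor has finitely many singular vector $k$-tuples by the Friedland--Ottaviani count \cite[Theorem 1]{FO}. Fix any $p \in \Delta_\bomega \setminus Q^k$; the linear fiber $\beta^{-1}(p)$ cannot lie entirely in $D$, for otherwise transitivity of $\mathrm{SO}(V)^{\times k}$ on $\PP \setminus Q^k$ would force $\beta^{-1}(q) \subset D$ for every $q \in \PP \setminus Q^k$, and then $D$ would equal $\PP(V^{\otimes k})$. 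Hence a general $T \in \beta^{-1}(p) \subset \kappa^{nor}_{n,k,\bomega}$ admits only finitely many singular vector $k$-tuples, so the fiber $\alpha|_Y^{-1}(T) \subset \Delta_\bomega \setminus Q^k$ is finite. This yields $\mathrm{codim}(\kappa^{nor}_{n,k,\bomega}) = (k-t)(n-1)$, which depends only on the number of parts $t$.

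The main obstacle is the generic finiteness of $\alpha|_Y$: nothing else is genuinely new compared to Theorem \ref{thm: codim Kalman symmetric tuples}, but one must confirm that the $\mathrm{SO}(V)^{\times k}$-propagation argument continues to work from the thinner locus $\Delta_\bomega \setminus Q^k$ (rather than the full diagonal) to all of $\PP \setminus Q^k$; this is ensured by the transitivity of the action on non-isotropic tuples. The dimension count $t(n-1) - k(n-1) = -(k-t)(n-1)$ then makes transparent why only the number of parts of $\bomega$ enters the codimension.
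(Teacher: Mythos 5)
Your proposal is correct and follows essentially the same route as the paper, which simply refers to ``similar arguments as in the proof of Theorem \ref{thm: codim Kalman symmetric tuples}'': replace the full diagonal $\Delta$ by the generalized diagonal $\Delta_\bomega$ of dimension $t(n-1)$, use Theorem \ref{thm: fibers} and Lemma \ref{lem: components} to get irreducibility and compute $\dim Y$, and use $\mathrm{SO}(V)^{\times k}$-equivariance to propagate generic finiteness of $\alpha|_Y$ from a general non-isotropic tuple to those on $\Delta_\bomega\setminus Q^k$. One minor notational imprecision: the fiber $\alpha|_Y^{-1}(T)$ is a subset of $Y\subset\Sigma_X$, not of $\Delta_\bomega\setminus Q^k$ (you presumably mean its image under $\beta$, which lies in the closure $\overline{\Delta_\bomega\setminus Q^k}\subset\Delta_\bomega$), but this does not affect the finiteness conclusion since all such points still record singular vector $k$-tuples of the fixed $T$.
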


\begin{Example}
In the binary tensor case $(n=2)$, we see that $\mathrm{codim}(\kappa^{nor}_{2,k,\bomega}) =k-t$. The first non-trivial case not considered before is $\kappa^{nor}_{2,3,(2,1)}$ which is a hypersurface of $\PP(\C^2\otimes\C^2\otimes\C^2)$ of degree $8$. If $\bomega=(2,1^{k-2})$, we have $t=k-1$ and therefore $\kappa^{nor}_{2,k,\bomega}$ is always a hypersurface. Moreover, note that, for instance, $\mathrm{codim}(\kappa^{nor}_{2,4,(2,2)})=\mathrm{codim}(\kappa^{nor}_{2,4,(3,1)})=2$, even though $\kappa^{nor}_{2,4,(2,2)}$ and $\kappa^{nor}_{2,4,(3,1)}$ have different degrees.
\end{Example}
\bibliographystyle{alpha}
\bibliography{biblio}
\end{document}